\documentclass{article}
\usepackage[utf8]{inputenc}
\usepackage{graphicx}
\usepackage{amsfonts}
\usepackage{amsmath}
\usepackage{amssymb}
\usepackage{fancyhdr}
\usepackage{titlesec}
\usepackage{indentfirst}
\usepackage{booktabs}
\usepackage{verbatim}
\usepackage{color}
\usepackage{amsthm}
\usepackage{subfigure}
\usepackage{mathabx}

\usepackage[page,header]{appendix}
\usepackage{titletoc}

\newcommand{\rd}{\,\mathrm{d}}
\numberwithin{equation}{section}
\newtheorem{theorem}{Theorem}[section]
\newtheorem{lemma}[theorem]{Lemma}
\newtheorem{corollary}[theorem]{Corollary}

\newtheorem{proposition}[theorem]{Proposition}

\newtheorem{remark}[theorem]{Remark}

\usepackage{mathtools}

\def\bx{{\bf x}}

\def\cE{\mathcal{E}}
\def\cM{\mathcal{M}}

\def\cW{\mathcal{W}}

\def\cU{\mathcal{U}}
\def\cV{\mathcal{V}}

\def\supp{\textnormal{supp\,}}

\def\pvint{\textnormal{p.v.}\int}
\def\capa{\textnormal{Cap}}


\topmargin 0cm \oddsidemargin 0.66cm \evensidemargin 0.66cm
\textwidth 14.66cm \textheight 22.23cm

\headheight 0cm
\headsep 0cm

\begin{document}

\title{A family of interaction energy minimizers supported on two intervals}

\author{Steven B. Damelin\footnote{zbMATH Open, European Mathematical Society, Department of Mathematics
FIZ Karlsruhe – Leibniz Institute for Information Infrastructure, Franklinstr. 1110587 Berlin, Germany (steve.damelin@gmail.com)} \,\,and Ruiwen Shu\footnote{Department of Mathematics, University of Georgia, Athens, GA 30602 (ruiwen.shu@uga.edu).}}    

\maketitle





\begin{abstract}
In this paper, we consider the one-dimensional interaction energy
$ 
    \frac{1}{2}\int_{\mathbb{R}}(\cW*\rho)(x)\rd{\rho(x)} + \int_{\mathbb{R}}\cU(x)\rd{\rho(x)}
$
where the interaction potential $\cW(x)= -\frac{|x|^b}{b},\,1\le b \le 2$ and the external potential $\cU(x)=\frac{|x|^4}{4}$, and $\rho$ is a compactly supported probability measure on the real line. Our main result shows that the minimizer is supported on two intervals when $1<b<2$, showing in particular how the support of the minimizer transits from an interval (when $b=1$) to two points (when $b=2$) as $b$ increases. As a crucial part of the proof, we develop a new version of the iterated balayage algorithm, the original version of which was designed by Benko, Damelin, Dragnev and Kuijlaars for logarithmic potentials in one dimension. We expect the methodology in this paper can be generalized to study minimizers of interaction energies in $\mathbb{R}^d$ whose support is possibly an annulus.
\end{abstract}

\section{Introduction}

In this paper, we consider the one-dimensional interaction energy
\begin{equation}\label{cE}
    \cE:=\cE[\rho] = \frac{1}{2}\int_{\mathbb{R}}(\cW*\rho)(x)\rd{\rho(x)} + \int_{\mathbb{R}}\cU(x)\rd{\rho(x)}\,,
\end{equation}
where, for $x\in \mathbb R$, the interaction potential $\cW$ and the external potential $\cU$ are given by
\begin{equation}
    \cW(x) = -\frac{|x|^b}{b},\,1\le b \le 2,\quad \cU(x) = \frac{|x|^4}{4}\,,
\end{equation}
and $\rho\in\cM_c(\mathbb{R})$ is a compactly supported probability measure where 
here and throughout, $\cM_c(\mathbb{R})$ denotes the space of compactly supported probability measures on $\mathbb R$.

Here and throughout, $*$ denotes the action of convolution. In order to proceed, we need the motivating:
\begin{proposition}\label{prop_EL}
    There exists a unique minimizer of $\cE$. It is the only element in $\cM_c(\mathbb{R})$ satisfying the Euler-Lagrange conditions
    \begin{equation}\label{EL}
        \cW*\rho+\cU = C_0,\quad\textnormal{on }\supp\rho\,,
    \end{equation}
    and
    \begin{equation}\label{EL2}
        \cW*\rho+\cU \ge C_0,\quad\textnormal{on }\mathbb{R}\,.
    \end{equation}
\end{proposition}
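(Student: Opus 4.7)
My plan follows the standard blueprint: existence by the direct method, necessity of \eqref{EL}--\eqref{EL2} by a first-variation argument, and uniqueness together with sufficiency of these conditions from convexity of $\cE$.

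For existence, I would first establish coercivity. Since $|x-y|^b\le C(|x|^4+|y|^4+1)^{b/4}$ with $b/4<1$, Jensen's inequality yields
\[
    \cE[\rho]\ge \int\cU\rd\rho - C'\Bigl(\int\cU\rd\rho\Bigr)^{b/4} - C'',
\]
so $\cE$ is bounded below on $\cM_c(\mathbb R)$ and any minimizing sequence $\{\rho_n\}$ has uniformly bounded fourth moments, hence is tight. Weak-$*$ lower semicontinuity of $\int\cU\rd\rho$ follows from Portmanteau; for the interaction term I would use the Fourier representation
\[
    |x|^b = c_b\int_{\mathbb R}\frac{1-\cos(\xi x)}{|\xi|^{b+1}}\rd\xi,\qquad 0<b<2,\ c_b>0,
\]
to rewrite $\iint|x-y|^b\rd\rho(x)\rd\rho(y) = c_b\int(1-|\hat\rho(\xi)|^2)|\xi|^{-b-1}\rd\xi$. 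Combining the uniform bound $1-|\hat\rho_n(\xi)|^2\le\min(\xi^2\int x^2\rd\rho_n,2)$ with dominated convergence makes this term continuous along the minimizing sequence, and a weak-$*$ limit $\rho_*$ attains the infimum; compactness of $\supp\rho_*$ is verified a posteriori. The endpoint $b=2$ is handled by direct computation below.

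To derive \eqref{EL}--\eqref{EL2}, take any $\eta\in\cM_c(\mathbb R)$, form $\rho_t=(1-t)\rho_*+t\eta$, and exploit minimality at $t=0^+$:
\[
    0\le\left.\tfrac{d}{dt}\cE[\rho_t]\right|_{t=0^+}=\int(\cW*\rho_*+\cU)\rd(\eta-\rho_*).
\]
Taking $\eta=\delta_x$ yields \eqref{EL2} with $C_0:=\int(\cW*\rho_*+\cU)\rd\rho_*$. Integrating \eqref{EL2} against $\rho_*$ forces $\cW*\rho_*+\cU=C_0$ $\rho_*$-almost everywhere, and continuity of $\cW*\rho_*$ (since $\cW$ is continuous and $\rho_*$ is compactly supported) extends this to all of $\supp\rho_*$, giving \eqref{EL}. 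Compactness of $\supp\rho_*$ then follows from \eqref{EL2}: for $|x|$ large, $\cU(x)$ dominates $|\cW*\rho_*(x)|$ (using $b<4$), so $\cW*\rho_*+\cU>C_0$ strictly outside a bounded set.

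The decisive step is strict convexity of $\cE$, which simultaneously yields uniqueness and the converse implication: if $\rho$ satisfies \eqref{EL}--\eqref{EL2} then for any $\nu\in\cM_c(\mathbb R)$,
\[
    \cE[\nu]-\cE[\rho]\ge\int(\cW*\rho+\cU)\rd(\nu-\rho)\ge 0,
\]
the first by convexity and the second by \eqref{EL}--\eqref{EL2}. The second variation along $\rho_t=(1-t)\rho_0+t\rho_1$ is $\iint\cW(x-y)\rd\mu(x)\rd\mu(y)$ with $\mu=\rho_1-\rho_0$, $\mu(\mathbb R)=0$, and the Fourier representation gives
\[
    \iint|x-y|^b\rd\mu(x)\rd\mu(y) = -c_b\int|\hat\mu(\xi)|^2|\xi|^{-b-1}\rd\xi,
\]
strictly negative unless $\mu\equiv 0$, so $\cE$ is strictly convex for $1\le b<2$. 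I anticipate the main obstacle at the endpoint $b=2$, where the Fourier argument degenerates (indeed $\iint(x-y)^2\rd\mu(x)\rd\mu(y)=-2(\int x\rd\mu)^2$ vanishes on all mean-zero perturbations). There I would argue directly: writing $m_k=\int x^k\rd\rho$, one computes $\cE[\rho]=\tfrac12(m_1^2-m_2)+\tfrac14 m_4\ge -\tfrac12\sqrt{m_4}+\tfrac14 m_4\ge -\tfrac14$ using the Cauchy--Schwarz bound $m_2\le\sqrt{m_4}$, and the equality cases pin down $\rho=\tfrac12(\delta_{-1}+\delta_1)$ as the unique minimizer.
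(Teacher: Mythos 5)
Your proposal is correct and follows the same overall blueprint as the paper (existence, first variation for necessity, convexity for uniqueness and sufficiency, with $b=2$ treated separately), but several steps are implemented differently. For existence, the paper minimizes over $\cM([-R,R])$ and then shows a support bound uniform in $R$, whereas you run the direct method on all of $\mathbb{R}$, using the L\'evy--Khintchine representation of $|x|^b$ to get convergence of the interaction term along the minimizing sequence and recovering compactness of the support a posteriori from \eqref{EL2}; this works, but note that the weak-$*$ limit need not lie in $\cM_c(\mathbb{R})$, so you should formally enlarge the admissible class to probability measures with finite fourth moment (where your coercivity bound and the first-variation argument with $\eta=\delta_x$ still apply) and only then conclude compact support and restrict back. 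For the crucial positivity $\int_{\mathbb{R}}(\cW*\mu)\rd{\mu}>0$ for nontrivial compactly supported signed measures of total mass zero, the paper cites the literature, while you derive it directly from the same Fourier representation --- self-contained and equivalent. At $b=2$ the paper verifies that $\frac12(\delta_1+\delta_{-1})$ satisfies \eqref{EL}--\eqref{EL2} and runs the interpolation argument tracking equality cases, whereas your moment computation $\cE=\frac12(m_1^2-m_2)+\frac14 m_4\ge-\frac14$, with equality exactly for $\frac12(\delta_1+\delta_{-1})$, gives existence and uniqueness of the minimizer more directly; to obtain the full statement at $b=2$ you should add the one-line remark that the sufficiency inequality only needs the non-strict convexity $\int_{\mathbb{R}}(\cW*\mu)\rd{\mu}=\big(\int x\rd{\mu}\big)^2\ge 0$, so any solution of \eqref{EL}--\eqref{EL2} is a minimizer and hence equals $\frac12(\delta_1+\delta_{-1})$.
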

The proof of Proposition \ref{prop_EL} is standard and outlined as below.
\begin{proof}
The argument proceeds as follows: First, for any $R>0$, there exists a minimizer $\rho_R$ in $\cM([-R,R])$ (the space of all probability measures on $[-R,R]$) by a lower-semicontinuity argument as in \cite[Lemma 2.2]{SST15}. 
Using the Euler-Lagrange condition for $\rho_R$ and the fact that $\cU$ grows faster than $-\cW$ at infinity,  there exists $R_*>0$ independent of $R$ such that $\supp\rho_R\subset [-R_*,R_*]$. This implies that $\rho_{R_*}$ is a minimizer in $\cM([-R,R])$ for any $R\ge R_*$, and thus a minimizer in $\cM_c(\mathbb{R})$. The Euler-Lagrange conditions \eqref{EL}\eqref{EL2} for this minimizer can be established easily. 

For $1\le b < 2$, the uniqueness of the minimizer and the sufficiency of the Euler-Lagrange conditions follow from a convexity argument, for example, see the proof of \cite[Theorem 2.1]{DOSW}. Here one needs a crucial inequality that $\int_{\mathbb{R}}(\cW*\mu)(x)\rd{\mu(x)}>0$ for any nontrivial compactly supported mean-zero signed measure $\mu$. This is a consequence of \cite[Theorems 3.10 and 3.8]{Shu_convex}, which gives the Fourier representation of this integral and the positivity of $\hat{\cW}$, respectively. Here, $\hat{\cW}$ denotes the Fourier transform of the function $\cW$.

If $b=2$, then we still have a non-strict inequality $\int_{\mathbb{R}}(\cW*\mu)(x)\rd{\mu(x)}=(\int x\rd{\mu(x)})^2\ge 0$, and equality only holds when $\int x\rd{\mu(x)}=0$. Then we verify that $\rho(x) = \frac{1}{2}(\delta_1(x)+\delta_{-1}(x))$ satisfies \eqref{EL}\eqref{EL2}, with the inequality in \eqref{EL2} being strict on $(\supp\rho)^c$. For any $\rho_1\in\cM_c(\mathbb{R})$ not equal to $\rho$, we define the linear interpolation $\rho_t = (1-t)\rho+t\rho_1$, and we have
\begin{equation}\label{Eineq1}
    \frac{\rd^2}{\rd t^2}\cE[\rho_t] = \int_{\mathbb{R}}(\cW*(\rho_1-\rho))(x)\rd{(\rho_1-\rho)(x)}\ge 0\,,
\end{equation}
and
\begin{equation}\label{Eineq2}
    \frac{\rd}{\rd t}\Big|_{t=0}\cE[\rho_t] = \int_{\mathbb{R}}(\cW*\rho+\cU)(x)\rd{(\rho_1-\rho)(x)} \ge 0\,.
\end{equation}
Here, the inequality \eqref{Eineq2} achieves the equal sign only when $\supp\rho_1\subset \supp\rho=\{\pm1\}$, due to the strict inequality \eqref{EL2} for $\rho$ on $(\supp\rho)^c$. Then it is easy to verify that $\int_{\mathbb{R}}x\rd{(\rho_1-\rho)(x)}\ne 0$ using $\rho_1\ne \rho$, so that the inequality \eqref{Eineq1} is strict in this case. Therefore we see that at least one of the inequalities \eqref{Eineq1} and \eqref{Eineq2} are strict, so we conclude that $\cE[\rho_1]> \cE[\rho]$, i.e., the minimizer of $\cE$ is unique.  The sufficiency of the Euler-Lagrange condition follows by a similar linear interpolation argument.

\end{proof}

The unique minimizer of $\cE$ is known when $b=1,2$. This can be easily checked by verifying the Euler-Lagrange conditions \eqref{EL}\eqref{EL2} explicitly.
\begin{itemize}
    \item When $b=1$, the unique minimizer is 
    \begin{equation}
        \rho(x) = \frac{3}{2}x^2 \chi_{[-1,1]}(x)\,.
    \end{equation}
    Notice that the density touches 0 at $x=0$.
    \item When $b=2$, the unique minimizer is 
    \begin{equation}
        \rho(x) = \frac{1}{2}(\delta_1(x)+\delta_{-1}(x))\,.
    \end{equation}
\end{itemize}
In this paper we give a qualitative description of the minimizer when $1<b<2$, and we give an understanding of how the support of the minimizer transits from an interval to two points as $b$ increases. Our main theorem is the following:
\begin{theorem}\label{thm_main}
    Assume $1<b<2$. Then the unique minimizer $\rho$ of $\cE$ satisfies 
    \begin{equation}
        \supp\rho = [-R_2,-R_1]\cup [R_1,R_2]\,,
    \end{equation}
    for some $R_2>R_1>0$ depending on $b$. $\rho$ is a locally integrable function, satisfying that
    \begin{equation}
        \rho(x)\cdot \big((R_2^2-x^2)(x^2-R_1^2)\big)^{\frac{b-1}{2}}
    \end{equation}
    is $C^1$ (continuously differentiable) and strictly positive on $\supp\rho$.
\end{theorem}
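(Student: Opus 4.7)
The plan is to verify the Euler-Lagrange characterization of Proposition~\ref{prop_EL} by constructing a candidate $\rho$ on a prescribed two-interval support and checking \eqref{EL}--\eqref{EL2}. The argument proceeds in four stages: symmetry reduction, ruling out the single-interval case, constructing $\rho$ on the two-interval support via an iterated balayage algorithm (IBA), and determining the endpoints $R_1, R_2$.

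First, since both $\cW$ and $\cU$ are even, uniqueness in Proposition~\ref{prop_EL} forces $\rho(-x) = \rho(x)$, so $\supp\rho$ is symmetric about $0$. Next I would rule out the single-interval configuration $\supp\rho = [-R, R]$ by attempting to solve \eqref{EL} explicitly: a Tricomi-type inversion formula for the operator $\sigma\mapsto -\frac{1}{b}\int |x-y|^b \sigma(y)\,dy$ on $[-R, R]$, combined with the external field $\cU(x) = x^4/4$, yields a candidate density of the form $\rho(x) = P(x)(R^2 - x^2)^{-(b-1)/2}$ for an explicit even polynomial $P$ of degree four. In the borderline case $b = 1$ this recipe reproduces $\rho(x) = \tfrac{3}{2}x^2 \chi_{[-1,1]}(x)$ from the excerpt, with $P(x) \propto x^2$ vanishing at $0$; for $b$ slightly above $1$ the analogous $P$ becomes strictly negative in a neighborhood of $0$, contradicting $\rho \geq 0$. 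This forces a gap around the origin, and by symmetry plus a parallel inversion argument against configurations with more than two components, the only admissible topology is $[-R_2, -R_1] \cup [R_1, R_2]$ with $R_2 > R_1 > 0$.

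On this two-interval support, \eqref{EL} becomes a coupled pair of singular integral equations. This is where the IBA enters: starting from the balayage of some initial signed measure onto $[R_1, R_2]$ via the single-interval Tricomi formula, one alternately balayages between the two components. Each projection is explicit, and the contractivity of the iteration is controlled by the positive definiteness of the bilinear form $\mu \mapsto \int(\cW * \mu)\,d\mu$ on mean-zero measures, established in the proof of Proposition~\ref{prop_EL} via the Fourier representation of \cite{Shu_convex}. The limit is the unique even signed measure on $S$ satisfying \eqref{EL}. The Tricomi inversion on each component contributes endpoint singularities of order $(R_2^2 - x^2)^{-(b-1)/2}$ near $\pm R_2$ and $(x^2 - R_1^2)^{-(b-1)/2}$ near $\pm R_1$; multiplying $\rho$ by the factor $\bigl((R_2^2 - x^2)(x^2 - R_1^2)\bigr)^{(b-1)/2}$ cancels these and produces the $C^1$, strictly positive function claimed in the theorem.

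The endpoints $R_1, R_2$ are fixed by two scalar conditions: the mass constraint $\int_S \rho\,dx = 1$, and a soft-edge condition at the inner endpoints $\pm R_1$ guaranteeing $\cW*\rho + \cU \geq C_0$ inside the gap $(-R_1, R_1)$. Solvability and uniqueness of this system for each $1 < b < 2$ I would handle by continuity and monotonicity in $b$, anchored at the explicit boundary configurations at $b = 1, 2$. The inequality \eqref{EL2} off $S$ then follows from a sign analysis of $\cW*\rho + \cU - C_0$ through its derivatives, using the explicit Tricomi structure and the fact that $\cU$ grows faster than $-\cW$ at infinity. The main obstacle I anticipate is the convergence of the IBA in the Riesz (non-logarithmic) setting: the Benko--Damelin--Dragnev--Kuijlaars framework treats only the logarithmic kernel, and establishing the analogous contraction estimate for $-|x|^b/b$ with $1 < b < 2$ is precisely the extension flagged in the abstract.
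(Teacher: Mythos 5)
There is a genuine gap, and it sits exactly where you flag it yourself: the balayage/Tricomi machinery you invoke is not available for the kernel $\cW(x)=-|x|^b/b$ itself. Balayage (and the explicit inversion formulas with endpoint exponents $-(b-1)/2$) is a Riesz-kernel tool; for $1<b<2$ the kernel $-|x|^b/b$ is negative, increasing in $|x|$, and not of the potential-theoretic type for which sweeping measures onto compact sets is defined. The paper resolves this by a reduction you are missing: differentiate the Euler--Lagrange identity \eqref{EL} twice, which converts the problem into one for the Riesz kernel $W(x)=|x|^{-s}$, $s=2-b$, with quadratic external field (Lemma \ref{lem_reduct}). The price of that reduction is that solving the differentiated equation only makes $\cW*\rho+\cU$ piecewise \emph{linear} on the two components, and one must impose an extra scalar condition --- in the paper, $F(\lambda)=0$ in \eqref{F}, i.e.\ the derivative of $\cW*\rho+\cU$ matches across the gap --- to recover \eqref{EL}. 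Your ``soft-edge condition at $\pm R_1$'' plays no such role and is not shown to be the right closure; and your determination of $(R_1,R_2)$ ``by continuity and monotonicity in $b$, anchored at $b=1,2$'' is not an argument (the $b=2$ anchor is the degenerate two-point measure, so continuation from it is exactly what needs proof). The paper instead scales out one parameter (Lemma \ref{lem_lam2}), studies the one-parameter family $\mu_\lambda$, and finds $\lambda_*$ by an intermediate-value argument: $F(\lambda_\infty)>0$ at the first $\lambda$ where $\mu_\lambda\ge0$ (Proposition \ref{prop_iba}), $F(\lambda)\to\int_{-1}^1 U\,\rd x<0$ as $\lambda\to1^-$, and $F$ continuous.

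Second, your iteration is not the IBA that works here, and its two essential outputs --- convergence and \emph{positivity} of the limit --- are both unaddressed. You propose alternating balayage between the two fixed components $[R_1,R_2]$ and $[-R_2,-R_1]$ with contraction supplied by positive definiteness of the energy form; no such contraction estimate exists for your kernel (you concede this), and even granting it, a fixed-support alternating scheme gives you a signed solution of the integral equation with no mechanism forcing nonnegativity or identifying the correct support. The paper's IBA is structurally different: it iterates over the inner endpoint itself, $\lambda_{j+1}=\inf\{x>\lambda_j:\mu_{\lambda_j}(x)\ge0\}$, sweeping the negative part onto the support of the positive part, and the engine is the ratio-monotonicity of $Bal_\lambda[\mu]/\omega_1$ (Proposition \ref{prop_blam}, a one-dimensional Riesz-kernel fact), which propagates the sign structure and yields positivity of $\mu_\lambda$ for all $\lambda\ge\lambda_\infty$ --- hence of the eventual $\mu_{\lambda_*}$. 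Finally, your verification of \eqref{EL2} in the gap is only sketched; the paper gets it from $v^{(4)}>0$ for the reduced potential together with the boundary data $v=v'=0$ at $\pm\lambda_*$ and a convexity/maximum-principle argument, which again relies on the Riesz-kernel reduction you have not made. The endpoint exponent $(b-1)/2$ and the strategy ``construct a candidate, check \eqref{EL}--\eqref{EL2}, invoke uniqueness'' are right, but as written the construction does not go through.
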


The minimizer of $\cE$ with $b=1.3$ is illustrated in Figure \ref{fig1}.

\begin{figure}[htp!]
 	\begin{center}
 		\includegraphics[width=0.7\textwidth]{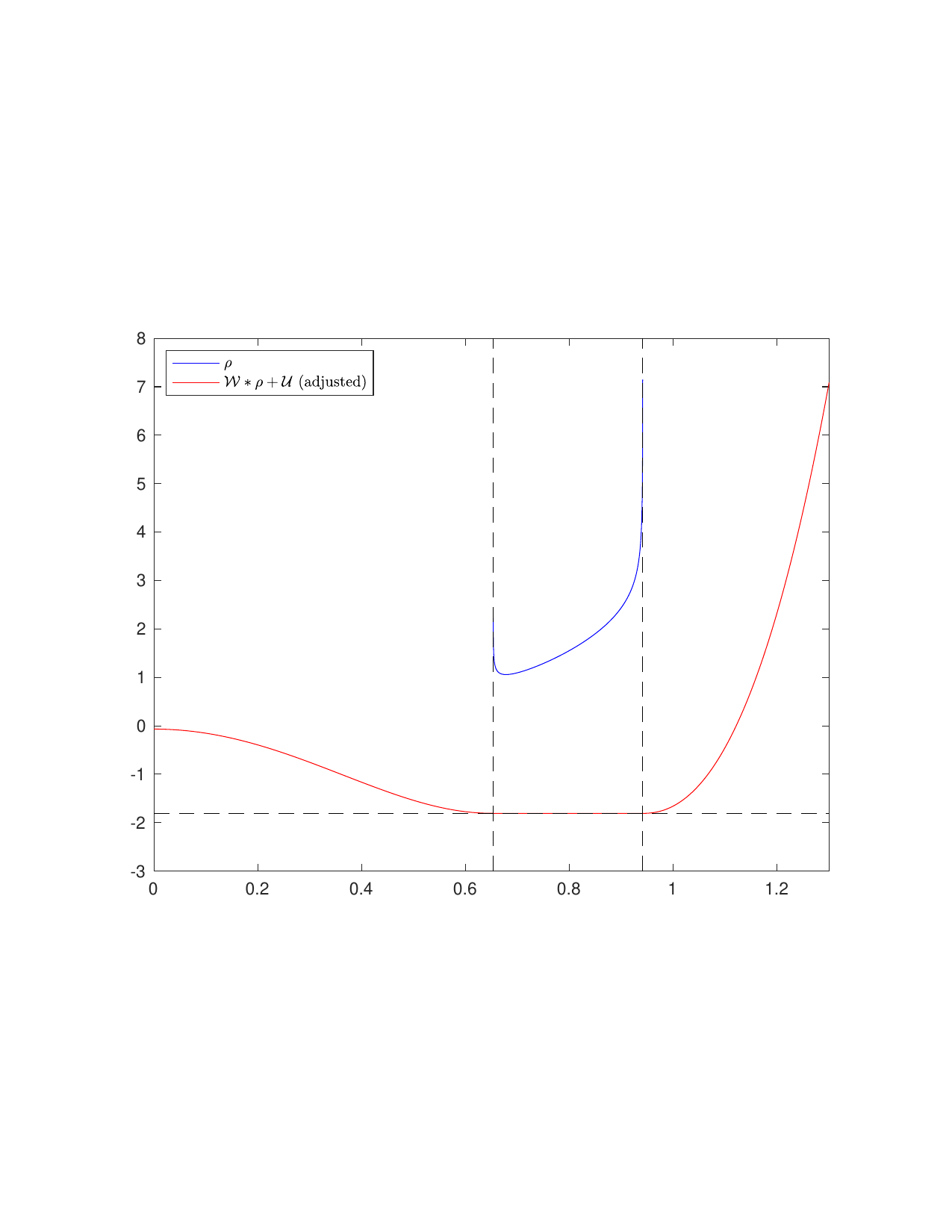}
 		
 		\caption{The minimizer $\rho$ of $\cE$ with $b=1.3$, and its generated potential $\cW*\rho+\cU$. We have $R_1\approx 0.6532$ and $R_2\approx 0.9411$. We only sketch the graphs on $x>0$ due to symmetry. We sketch the adjusted version $50(\cW*\rho+\cU+0.6)$ of the generated potential for clearer illustration.}
 		\label{fig1}
 	\end{center}	
 \end{figure}

Throughout, $C, C_1, ..$ will always denote absolute positive constants and the same symbol may denote a different constant from time to time. By the symbol $\|\cdot\|$, we mean a norm and its context will be clear.

\subsection{Motivation of this paper}

The main motivation of this paper is to develop the necessary tools to study the minimizers of interaction energies in $\mathbb{R}^d$ whose support is possibly an annulus $\{\bx\in\mathbb{R}^d:R_1\le |\bx|\le R_2\},\,R_2>R_1>0$.

The interaction energy $\cE[\rho]$ in multi-dimensions with various types of $\cW$ and $\cU$ appears naturally in the study of collective dynamics in biological and social sciences. 
Its associated Wasserstein-2 gradient flow is the aggregation equation, and thus the global/local minimizers of $\cE$ are expected to be the stable steady states for the aggregation equation. We refer to \cite{CS21} and the references therein for a review of the literature on this interaction energy. Interactive energy with external fields for logarithmic kernels appear in many areas for example integrable systems, orthogonal polynomials and approximation theory \cite{Deift,LL,Lub,Mha,Lub93}. 

For the attractive-repulsive interaction potential $\cW_{a,b}(\bx)=\frac{|\bx|^a}{a}-\frac{|\bx|^b}{b}$ (without external potential) on $\mathbb{R}^d$, if $-d<b\le 2 \le a \le 4,\,a>b,\,(a,b)\ne (4,2)$, it is known that there exists a minimizer \cite{SST15,CCP15}, and the minimizer is unique up to translation and radially symmetric \cite{Lop19,CS21,Fra}. If $d\ge 2$ and one further has $b\ge b_*(a):=\frac{-10+3a+7d-a d-d^2}{d+a-3}$, then the minimizer is the uniform measure on a sphere \cite{BCLR13_2,DLM1,FM}. If one instead has $b\le 2-d$, then it is easy to see that the support of the minimizer is a ball \cite[Proposition 2.1]{Shu_expli}. If one fixes $a\in [2,4]$ and views $\cW_{a,b}$ as a one-parameter family of $b$, it is natural to ask how the support of the minimizer changes from a ball to a sphere when $b$ increases.

If $a=2$, then the minimizer is a constant multiple of $(R_2^2-|\bx|^2)^{\frac{d-b+2}{2}}$ for $-d<b<\min\{4-d,2\}$ for some $R_2=R_2(d,b)>0$ \cite{Frost,CV11,CH17,CS21,Fra}. Therefore, for $d\ge 3$ and $a=2$, the support of the minimizer suddenly changes from a ball to a sphere at $b=b_*(2)=4-d$. However, if $a=4$, the explicit formula of the minimizer is given by $(R_4^2-|\bx|^2)^{\frac{d-b+2}{2}}(A_1 R_4^2 + A_2 (R_4^2-|\bx|^2))$ for $-d<b\le \bar{b} := \frac{2+2d-d^2}{d+1}$ (where $R_4>0,A_1>0,A_2\in\mathbb{R}$ depends on $d$ and $b$), which is supported on a ball \cite{CH17,CS21} \footnote{Here, \cite{CH17} gives the explicit formula as a steady state. \cite{CS21} proves that it is the minimizer for $2-d<b\le \bar{b}$. The case $-d<b\le 2-d$ can be treated via \cite[Proposition 2.1]{Shu_expli}.}. Its density touches zero smoothly at the origin when $b=\bar{b}$. Since $\bar{b}<b_*(4)$, it is expected that a `hole' appears near the origin when $b>\bar{b}$, and the hole gets larger and larger as $b$ increases, until the support becomes a sphere at $b_*(4)$. In particular, one may conjecture that the support is an annulus when $\bar{b}<b<b_*(4)$. The local stability analysis in \cite{BCLR13_2} for a sphere-shaped steady state also suggests that `fattening' the sphere decreases the energy when $b$ is slightly less than $b_*(a)$, suggesting the appearance of an annulus-shaped minimizer.

To understand the shape of the support, especially in the case $2-d<b<\min\{4-d,2\}$, one known method \cite{CS21} is to study the sign of $\Delta^2\cW*\rho$ in vacuum regions. Suppose $\Delta^2\cW(\bx)<0,\,\forall\bx\ne0$, then, at least for radial minimizers, one can guarantee that the support is ball. This is obtained by proof by contradiction, via a maximum principle argument in a possible bounded vacuum region. 

A recent work \cite{CMSVW} studies a similar family of energy minimizers with $\cW(\bx)=-\frac{|\bx|^b}{b}$ and $\cU(\bx)=\frac{|\bx|^a}{a}$, and it shows that the minimizer is the uniform measure on a sphere if $a$ is greater than or equal to some threshold value $\alpha_{b,d}$, see (1.6) therein. Using a perturbative method with the sign control of $\Delta^2\cW*\rho$ near a sphere, an upcoming work of the second author with Carrillo, Huang and Saff \cite{CHSS} shows that the support of the minimizer is an annulus when $a$ is slightly less than $\alpha_{b,d}$.

However, we do not believe that analyzing the sign of $\Delta^2\cW*\rho$ can justify the full transition of the support from a ball to a sphere. In fact, in a non-perturbative setting, we can only hope to use this method if $\Delta^2\cW(\bx)<0,\,\forall\bx\ne0$. But this cannot be the case if the support of the minimizer is annulus shaped, due to the the vacuum region near the origin.

The energy $\cE$ in \eqref{cE} considered in this paper is a one-dimensional problem, where the support of the minimizer is a ball (an interval) when $b=1$ and a sphere (two points) when $b=2$. Our main result, Theorem \ref{thm_main}, shows that the minimizer is supported on an annulus (two intervals) when $1<b<2$. Therefore, in this one-dimensional model, we justify the transition of the minimizer along a one-parameter family from ball-shaped to sphere-shaped through annuli. 

The main approach of this paper is the \emph{iterated balayage algorithm} (IBA), which constructs a minimizer or steady state via an iteration that starts from a \emph{signed} steady state. By keeping track of certain monotonicity along the iteration, one can get a nice understanding of the shape of the minimizer.

In the following two subsections we review the literature on the IBA for the logarithm potential $W_{\log}(x)=-\ln|x|$. Then in Section \ref{sec_sketch} we sketch the proof of our main result, Theorem \ref{thm_main}.

\subsection{Logarithmic balayage onto a finite number of intervals}


As a brief review of the one-dimensional potential theory for the logarithmic potential, we consider $[-1,1]$ as the underlying space. The (logarithmic) equilibrium measure in the presence of a continuous 
external field $U: [-1,1] \longrightarrow {\bf R}$ is 
the unique $\mu\in \cM([-1,1])$ 
satisfying for some constant $C_0$,
\begin{equation} \label{eq11}
	\left\{ \begin{array}{rclcl}
	(W_{\log}*\mu)(x) + U(x) & = & C_0, && x \in \supp\mu, \\
	(W_{\log}*\mu)(x) + U(x) & \geq & C_0, && x \in [-1,1].
    \end{array} \right.
\end{equation}


We recall the notion of balayage onto a compact set,
see \cite{Landkof}. 
Let $K$ be a compact subset of the 
complex plane with positive logarithmic capacity
and such that the complement $\mathbb{R} 
\setminus K$ is regular (c.f. \cite{StahlTotik}) for the Dirichlet
problem. In particular, $K$ can be a union of finitely many intervals. Then, if $\nu$ is any finite positive Borel measure 
on  $\mathbb{R}$ with compact support, there exists
a unique positive measure $\hat{\nu}$ supported on $K$
such that $\| \nu \| = \|\hat{\nu} \|$,  and for
some constant $C$,
\begin{equation} \label{eq31}
	(W_{\log}*\hat{\nu})(z) = (W_{\log}*\nu)(z) + C,
	\qquad z \in K.
\end{equation}
The measure $\hat{\nu}$ is called the (logarithmic) balayage of $\nu$
onto $K$ and we denote it by $\widetilde{Bal}(\nu,K)$. For a
signed measure $\sigma = \sigma^+ - \sigma^-$, we define 
$\widetilde{Bal}(\sigma,K) :=  \widetilde{Bal}(\sigma^+, K) - \widetilde{Bal}(\sigma^-, K)$.

\subsection{The iterated balayage algorithm (IBA)}

The iterated balayage algorithm first appeared in \cite{KD99}, and has also been used in numerous papers, for example, \cite{DK99,DDK01,BDD06,BD12,DOSW25}. It gives an iterative 
method to solve an equilibrium problem 
with an external field. 

Given a smooth enough external field $U$ on $[-1,1]$ one proceeds 
as follows.
Suppose one knows that the support of $\mu$ is contained
in the interval $[a,b]\subset[-1,1]$. (For example, one could take
$[-1,1]$, but it will be useful to have some freedom here.) 
Then the first step is to solve the integral equation
\begin{equation} \label{eq41}
	(W_{\log}*\sigma_0)(x) = -U(x) + C_0,
	\qquad a < x < b, 
\end{equation}
to find a function $\sigma_0$ supported on $[a,b]$, subject to the total mass condition
\begin{equation} \label{eq42}
	 \int_a^b \sigma_0(t) dt = 1. 
\end{equation}
Formally differentiating (\ref{eq41}) with respect to $x$, 
one obtains the singular integral equation
\begin{equation} \label{eq43}
	\pvint_a^b \frac{\sigma_0(t)}{x-t} dt = U'(x), 
	\qquad a < x < b.
\end{equation}
Here $\pvint$ is used to denote a Cauchy principle value
integral.

It is well-known that if $U$ is smooth enough, for example
if $U$ is differentiable with a H\"older continuous derivative,
i.e., $U \in C^{1+\epsilon}([a,b])$ for some $\epsilon > 0$, then
(\ref{eq42})--(\ref{eq43}) has the unique solution
\begin{equation} \label{eq44}
	\sigma_0(t) = \frac{1}{\pi \sqrt{(b-t)(t-a)}} 
	\left[1 + \frac{1}{\pi} \pvint_{a}^b 
	\frac{U'(s)}{s-t} \sqrt{(b-s)(s-a)} ds \right],
	\quad a < t < b,
\end{equation}
where the above integral is again a Cauchy principle value
integral, see \S 42.3 of \cite{Gak}.
If the function $\sigma_0$ happens to be non-negative on
$[a,b]$ then it is the density of the equilibrium measure 
with external field $U$ and we are done.
If not, then let $\sigma_0 = \sigma_0^+ - \sigma_0^-$ be the Jordan
decomposition of $\sigma_0$ and 
\[ \Sigma_1 := \supp(\sigma_0^+). \]
It was shown in \cite{KD99} that $\mu \leq \sigma_0^+$
and $\supp\mu \subset \Sigma_1$, so that in
determining $\mu$ and its support we may restrict ourselves to
$\Sigma_1$. The next step is to consider the integral equation 
on $\Sigma_1$
\begin{equation} \label{eq45}
	(W_{\log}*\sigma_1)(x) = -U(x) + C_1,
	\qquad x \in \Sigma_1 \,,
\end{equation}
to find a function $\sigma_1$ supported on $\Sigma_1$, subject to the condition
\begin{equation} \label{eq46}
	\int_{\Sigma_1} \sigma_1(t)\rd{t} = 1.
\end{equation}
The solution to (\ref{eq45})-(\ref{eq46}) can be expressed in 
terms of the measure $\sigma_0$: it is simply the balayage 
of $\sigma_0$ onto $\Sigma_1$.

To describe this process, an operator $J$ was introduced in 
\cite{KD99} on all finite signed measures $\sigma$ on $[-1,1]$ 
with $\int d\sigma = 1$ and $\capa(\supp(\sigma^+)) > 0$ ($\capa$ means the logarithmic capacity) as 
follows
\[ J(\sigma) := \sigma^+ - \widetilde{Bal}(\sigma^-, \supp(\sigma^+))
	= \widetilde{Bal}(\sigma, \supp(\sigma^+)). \]
The operator $J$ sweeps the negative part of the measure
$\sigma$ onto the support of the positive part, so that
in particular $J(\sigma)^+ \leq \sigma^+$.

Returning to (\ref{eq45})-(\ref{eq46}), we see that 
$\sigma_1$ is given by
\[ \sigma_1 = J(\sigma_0). \]
Continuing in this way, we write for every $k \geq 1$,
\begin{equation} \label{eq47}
	\Sigma_k := \supp(\sigma_{k-1}^+),
	\qquad \sigma_k := J(\sigma_{k-1}) =
	J^k(\sigma_0). 
\end{equation}
The measures $\sigma_k$ are signed measures which have
a Jordan decomposition $\sigma_k = \sigma_k^+ - \sigma_k^-$.
It follows as in \cite{KD99} that 
\begin{equation} \label{eq48}
	\sigma_0^+ \geq \sigma_1^+ \geq \cdots  \geq \mu, 
\end{equation}
and
\begin{equation} \label{eq49}
	[a,b] \supset \Sigma_1 \supset \Sigma_2 
	\supset \cdots \supset \supp(\mu). 
\end{equation}

Under quite general conditions one expects from (\ref{eq48})
that the sequence $\{\sigma_k^+ \}_{k=0}^{\infty}$ converges
in weak$^*$ sense
to the equilibrium measure $\mu$ (in which case we say that the IBA converges). In fact, it suffices to show that the negative parts $\sigma_k^-$
tend to zero as $k$ tends to $\infty$, which is true if one can control the limiting
set $\Sigma^*=\lim_{k\rightarrow\infty}\Sigma_k$. To do this, one can usually start by showing that for every $k$, $\Sigma_{k+1}$ 
is contained in the interior of $\Sigma_k$ with respect 
to $[-1,1]$. Then it follows that any possible limit measure of a subsequence of 
$\{ \sigma_k^- \}$ is supported on $\partial \Sigma^*$,
the topological boundary of $\Sigma^*$ in $[a,b]$. If the capacity
of $\partial \Sigma^*$ is zero (for example, if $\Sigma^*$ is a union of finitely many intervals), then $\partial \Sigma^*$
cannot support a non-zero measure with a finite potential. 
This forces the sequence $\{ \sigma_k^- \}$
to converge to $0$, which proves the convergence of the
IBA.

Besides presenting a possible algorithm for numerical 
calculations, the IBA can also
be used to prove rigorous results on the support of 
$\mu$ in certain situations.
This was done in \cite{KD99}, where for the class of
external fields there it was shown that
$\Sigma_k$ consists of at most two intervals for every
$k$, which led to the result that the support of $\mu$ also consists of at most two intervals. Moreoever, in \cite{DK99}, Damelin and Kuijlaars studied the support of the equilibrium measure for a logarithmic potential with an external field of the form $-c x^{2m+1},\,x\in [-1,1]$ with $c>0$ and $m$ a positive integer. The authors showed that the support of the equilibrium measure consists of at most two intervals which resolved a question of Deift, Kriecherbauer and McLaughlin \cite{DKM98}. In \cite{DDK01}, the first author, Dragnev and Kuijlaars, investigated the support of the equilibrium measure for a logarithmic potential with a class of non-convex, non-smooth external fields on a finite interval. More precisely, the authors obtained a sufficient condition which ensures that the support of the equilibrium measure consists of at most two intervals. This condition was then applied to external fields of the form $-c{\rm sign}(x)|x|^{\alpha}$ with $c>0$, $\alpha\geq 1$ and $x\in [-1,1]$. In \cite{BDD06}, the first author, Benko and Dragnev studied the support of the equilibrium measure for a logarithmic potential with a class of external fields defined on arcs of the unit circle and on intervals of the compactified real line. Several sufficient conditions were given to ensure that the  support of the equilibrium measure is one interval or one arc.
\medskip

The recent paper \cite{DOSW25} uses the iterated balayage algorithm for the Riesz kernel to prove that certain equilibrium measures on $[-1,1]$ with external fields are supported on two intervals of the form $[-1,R]\cup[R,1]$ for some $R>0$.

\subsection{Sketch of proof and outline of the paper}\label{sec_sketch}


We start by noticing that taking the negative Laplacian on the Euler-Lagrange condition \eqref{EL} leads to a similar equation \eqref{WUEL} with \eqref{WU}. Here, the new interaction potential $W(x)=|x|^{-s},\,0<s<1$ is a Riesz kernel that is allowed for the IBA. To seek for an annulus-shaped probability measure satisfying the Euler-Lagrange condition \eqref{EL}, we can reverse this procedure, and seek for certain steady state associated to $W,U$, as described in Lemma \ref{lem_reduct}. Here we have two undetermined coefficients $R_1,R_2$, and we can eliminate one by a rescaling technique as described in Lemma \ref{lem_lam2}. At the end, we only need to study the one-parameter family $\mu_\lambda$, given by Lemma \ref{lem_lam1}, and find one of them satisfying $F(\mu_\lambda)=0$ (c.f. \eqref{F}).

In Section 3 we review some basic results from the literature about equilibrium measures and the balayage operator, as well as give some extensions. Then in Sections \ref{sec_mulam} and \ref{sec_mulam2} we study the family of signed measures $\mu_\lambda$, giving their existence, uniqueness, regularity and some continuity and limiting behavior. This allows us to use the IBA to study the monotone properties of $\mu_\lambda$ in Section \ref{sec_iba}. To be precise, in Proposition \ref{prop_iba} we construct an increasing sequence $\{\lambda_k\}$ so that $\{\mu_{\lambda_k}\}$ are related iteratively through the balayage operator, analogous to the sequence $\{\sigma_k\}$ in \eqref{eq47}. Then we find a value $\lambda_\infty=\lim_{k\rightarrow\infty}\lambda_k$ which is the smallest $\lambda$ value such that $\mu_\lambda$ is positive, and we guarantee that $F[\mu_{\lambda_\infty}]>0$. This allows us to find the correct value $\lambda_*\in (\lambda_\infty,1)$ satisfying the assumptions Lemma \ref{lem_lam2}, and thus to finish the proof of Theorem \ref{thm_main} in Section \ref{sec_main}.

We remark that the IBA in our paper shares some common features as the one in \cite[Section 6]{DOSW25}, but also has significant differences. This will be further explained in Remark \ref{rmk_compare}.

\section{Reduction to steady states on annuli}

For the minimizer $\rho$ of $\cE$, suppose $K:=\supp\rho$ is a finite union of closed intervals, then in the interior of each interval we may take $-\frac{1}{1-s}\cdot\frac{\rd^2}{\rd x^2}$ of the Euler-Lagrange condition \eqref{EL} and get
\begin{equation}\label{WUEL}
    W*\rho+U = 0\,,
\end{equation}
in the interior of $K$, where
\begin{equation}\label{WU}
    W(x) = -\frac{1}{1-s}\cW''(x) = |x|^{-s},\,s = 2-b \in (0,1),\quad U(x) = -\frac{1}{1-s}\cU''(x) = -\frac{3}{1-s}x^2\,.
\end{equation}
This implies that $\rho$ is the minimizer of 
\begin{equation}\label{E}
    E[\rho] = \frac{1}{2}\int_{K}(W*\rho)(x)\rd{\rho(x)} + \int_{K}U(x)\rd{\rho(x)}\,,
\end{equation}
in $\cM(K)$ (c.f. the proof of Proposition \ref{prop_EL}, which also works if $(W,U)$ is in place of $(\cW,\cU)$).

We will partly reverse this procedure, and find the minimizer of $\cE$ as a minimizer of $E$ on some annulus $K$. For $R_2>R_1>0$, denote
\begin{equation}
    K_{R_1,R_2} = [-R_2,-R_1]\cup [R_1,R_2]\,.
\end{equation}
Also denote
\begin{equation}
    V[\rho] = W*\rho+U\,.
\end{equation}

We need:
\begin{lemma}\label{lem_reduct}
    Assume $1<b<2$ and denote $s,W,U,V[\rho],K_{R_1,R_2}$ as above. For given $R_2>R_1>0$, suppose $\rho\in\cM(K_{R_1,R_2})$ satisfies that $V[\rho]$ is continuous on $\mathbb{R}$, with
    \begin{equation}\label{lem_reduct_1}
        V[\rho] = 0,\quad \textnormal{on }K_{R_1,R_2}\,,
    \end{equation}
    and
    \begin{equation}\label{lem_reduct_2}
        \int_{-R_1}^{R_1} V[\rho](x)\rd{x} = 0\,.
    \end{equation}
    Then $\rho$ satisfies \eqref{EL}.
\end{lemma}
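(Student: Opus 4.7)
The plan is to reverse the double differentiation that led from \eqref{EL} to $V[\rho]=0$. Set $g:=\cW*\rho+\cU$. Away from $x=0$ one has $\cW''(x)=-(1-s)|x|^{-s}=-(1-s)W(x)$ and $\cU''(x)=3x^2=-(1-s)U(x)$, and $|x|^{-s}\in L^1_{\mathrm{loc}}(\mathbb{R})$ because $s<1$. Since $\cW'(x)=-\sgn(x)|x|^{b-1}$ is continuous, differentiation under the integral yields $g\in C^1(\mathbb{R})$ classically, and in the distributional sense $g''=\cW''*\rho+\cU''=-(1-s)V[\rho]$. The continuity hypothesis on $V[\rho]$ then upgrades $g$ to $C^2(\mathbb{R})$ in the classical sense, with $g''=-(1-s)V[\rho]$ pointwise.

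By \eqref{lem_reduct_1}, $g''=0$ on $K_{R_1,R_2}$, so $g$ is affine on each of $[R_1,R_2]$ and $[-R_2,-R_1]$; denote the two slopes by $m_+$ and $m_-$. Using $g\in C^1$ and \eqref{lem_reduct_2},
\[
m_+-m_-=g'(R_1)-g'(-R_1)=\int_{-R_1}^{R_1} g''(x)\rd x=-(1-s)\int_{-R_1}^{R_1} V[\rho](x)\rd x=0,
\]
so $m_+=m_-=:m$.

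The heart of the proof is to show that $\rho$ is symmetric under $x\mapsto -x$, which will force $m=0$ and the two affine constants to agree. Let $\tilde\rho$ be the pushforward of $\rho$ under this reflection. Evenness of $W$ and $U$ gives $V[\tilde\rho](x)=V[\rho](-x)$, so by the symmetry of $K_{R_1,R_2}$ we still have $V[\tilde\rho]=0$ on $K_{R_1,R_2}$. Hence $\mu:=\rho-\tilde\rho$ is a compactly supported signed measure on $K_{R_1,R_2}$ with $\int\rd\mu=0$; and since $W*\mu=V[\rho]-V[\tilde\rho]=0$ on $K_{R_1,R_2}\supseteq\supp\mu$,
\[
\int_{\mathbb{R}}(W*\mu)(x)\rd\mu(x)=0.
\]
The Riesz kernel $W(x)=|x|^{-s}$ is strictly positive-definite on compactly supported mean-zero signed measures — the same Fourier-theoretic fact used in the proof of Proposition \ref{prop_EL} via \cite{Shu_convex} — hence $\mu=0$ and $\rho$ is symmetric.

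Symmetry of $\rho$ makes $g$ even, so $g'$ is odd and $g'(R_1)=-g'(-R_1)$. Combined with $g'(R_1)=g'(-R_1)$ shown in the second paragraph, we obtain $g'(\pm R_1)=0$, and hence $m=0$ together with $g(R_1)=g(-R_1)$. Thus $g$ equals a single constant $C_0$ on all of $K_{R_1,R_2}\supseteq\supp\rho$, which is \eqref{EL}. The main obstacle is the symmetry step: a priori the common slope $m$ and the two interval constants are untied, and the cleanest lever I see for tying them is the strict positive-definiteness of the Riesz kernel, which holds precisely because $s\in(0,1)$.
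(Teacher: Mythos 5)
Your proof is correct and follows essentially the same route as the paper: write $\cV[\rho]=\cW*\rho+\cU$, use continuity of $V[\rho]$ to get $\cV[\rho]\in C^2$ with $\cV''[\rho]=-(1-s)V[\rho]$, deduce that $\cV[\rho]$ is affine on each of the two intervals, and combine evenness with \eqref{lem_reduct_2} to kill the slope. The only minor difference is how evenness of $\rho$ is justified: you apply strict positive-definiteness of the Riesz kernel directly to $\rho-\tilde\rho$, while the paper invokes uniqueness of the minimizer of $E$ on $K_{R_1,R_2}$ — the same underlying fact.
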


As a consequence, suppose one can find $R_1,R_2$ and $\rho\in\cM(K_{R_1,R_2})$ satisfying \eqref{lem_reduct_1} and \eqref{lem_reduct_2}, then $\rho$ is the minimizer of $\cE$ provided that one can also verify \eqref{EL2}.

\begin{proof}
First notice that \eqref{lem_reduct_1} implies that $\rho$ is the unique minimizer of $E$ in $\cM(K_{R_1,R_2})$, and thus it is even. Therefore the function $\cV[\rho]:=\cW*\rho+\cU$ is also even. Since $-\cV''[\rho] = (1-s)V[\rho] = 0$ on $K_{R_1,R_2}$, we see that $\cV[\rho]$ is a linear function on $[-R_2,-R_1]$ and $[R_1,R_2]$, i.e.,
\begin{equation}
    \cV[\rho](x) = C_0 + C_1 x,\quad\textnormal{on }[R_1,R_2];\qquad \cV[\rho](x) = C_0 - C_1 x,\quad\textnormal{on }[-R_2,-R_1]\,,
\end{equation}
for some constants $C_0,C_1\in\mathbb{R}$.

Since $V[\rho]$ is continuous on $\mathbb{R}$, we see that $\cV[\rho]\in C^2(\mathbb{R})$. Therefore we may calculate
\begin{equation}
    \cV'[\rho](R_1) - \cV'[\rho](-R_1) = \int_{-R_1}^{R_1} \cV''[\rho](x)\rd{x} = -(1-s)\int_{-R_1}^{R_1} V[\rho](x)\rd{x} = 0\,,
\end{equation}
by \eqref{lem_reduct_2}. This implies $C_1=0$, and thus \eqref{EL} is obtained.

\end{proof}

Then, in the following two lemmas, we give a rescaling argument which reduces the two parameters $R_1,R_2$ into one parameter $\lambda\in (0,1)$.
\begin{lemma}\label{lem_lam1}
    Let $\lambda\in [0,1)$. Then there exists a unique signed measure $\mu_\lambda$ supported on $K_{\lambda,1}$ such that 
    \begin{equation}\label{lem_lam1_1}
        V[\mu_\lambda] = 0,\quad \textnormal{on }K_{\lambda,1}\,.
    \end{equation}
    $\mu_\lambda$ satisfies that $V[\mu_\lambda]$ is continuous on $\mathbb{R}$.
\end{lemma}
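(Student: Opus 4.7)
The plan is to realise $\mu_\lambda$ as the unique element of the natural energy Hilbert space on $K_{\lambda,1}$ via Riesz representation, and then to upgrade the distributional identity to pointwise continuity.

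First I would introduce the energy Hilbert space $\mathcal H_\lambda$: the completion of smooth signed densities supported in $K_{\lambda,1}$ under the inner product $\langle \mu,\nu\rangle_W := \iint |x-y|^{-s}\rd\mu(x)\rd\nu(y)$. By the Fourier identity $\langle \mu,\mu\rangle_W = c_s \int_{\mathbb R}|\xi|^{s-1}|\hat\mu(\xi)|^2\rd\xi$ with $c_s>0$ (this is the positivity of $\hat W$, already invoked in the proof of Proposition \ref{prop_EL}), the form is strictly positive definite; $\mathcal H_\lambda$ is then naturally identified with a closed subspace of the fractional Sobolev space $H^{-(1-s)/2}(\mathbb R)$ consisting of distributions supported in $K_{\lambda,1}$.

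Second, I would apply Riesz representation to the linear functional $\ell(\nu):=-\int U\rd\nu = \frac{3}{1-s}\int x^2\rd\nu$, which is bounded on $\mathcal H_\lambda$ because $U|_{K_{\lambda,1}}$ is smooth and hence lies in the dual $H^{(1-s)/2}$. This produces a unique $\mu_\lambda\in\mathcal H_\lambda$ with $\langle \mu_\lambda,\nu\rangle_W = \ell(\nu)$ for every $\nu\in\mathcal H_\lambda$, which is exactly \eqref{lem_lam1_1} in the distributional sense. Uniqueness is immediate from the same positivity: if $\mu'$ were another solution then $\nu := \mu_\lambda-\mu'$ would lie in $\mathcal H_\lambda$ with $\|\nu\|_W^2 = \int V[\mu_\lambda-\mu']\rd\nu = 0$, forcing $\nu=0$.

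Third, for continuity of $V[\mu_\lambda]$ on all of $\mathbb R$: on the open set $\mathbb R\setminus K_{\lambda,1}$ the kernel $|x-y|^{-s}$ is smooth in $x$, so $W*\mu_\lambda$ is real-analytic there; on the interior of $K_{\lambda,1}$ one has $V[\mu_\lambda]\equiv 0$ by construction, so $W*\mu_\lambda=-U$ is smooth. Only the four endpoints $\pm\lambda,\pm 1$ require separate attention. I would handle them via the explicit solution theory for the Riesz singular integral equation $W*\mu = -U$ on a union of intervals (the analogue of the Tricomi-type formula \eqref{eq44} used in the logarithmic case), which produces a density of the form (smooth factor on $K_{\lambda,1}$)$\times\bigl((1-x^2)(x^2-\lambda^2)\bigr)^{-s/2}$; such densities have Riesz potentials that extend continuously, and indeed H\"older continuously, across each endpoint. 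These tools are precisely what will be developed and cited in Section 3.

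The main obstacle I anticipate is exactly this last continuity claim. The Hilbert-space existence/uniqueness portion is a formal Riesz representation argument once the positivity of $\hat W$ is recorded, but finite $W$-energy alone does not yield continuity of the potential in the whole range $0<s<1$ (a naive fractional integration argument handles only $s<2/3$). The resolution must exploit the sharper endpoint behaviour of the specific solution $\mu_\lambda$, which is why the proof will need the classical inversion formulas for Riesz integral equations on intervals rather than a soft argument.
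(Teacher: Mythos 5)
Your Hilbert-space step is reasonable in spirit (and genuinely different from the paper, which never solves a variational problem on $K_{\lambda,1}$: it constructs $\mu_0$ explicitly as a linear combination of the equilibrium measure and the known minimizer for the quadratic external field, Lemma \ref{lem_mu0}, and then defines $\mu_\lambda := Bal_{\lambda}[\mu_0]$, Lemma \ref{lem_lam12}), but by itself it only produces an element of the energy completion, i.e.\ a distribution in $H^{-(1-s)/2}$ supported in $K_{\lambda,1}$, and the identity $V[\mu_\lambda]=0$ holds only when tested against finite-energy elements of that space. The lemma asserts the existence of a signed \emph{measure} (and the rest of the paper needs a density whose ratio with $\omega_{\lambda,1}$ is $C^1$, Lemma \ref{lem_mucont}), so essentially the whole burden of the proof is pushed onto your third step.

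That third step is where the genuine gap lies. You invoke an ``explicit solution theory for the Riesz singular integral equation on a union of two intervals'' producing a density of the form (smooth factor)$\times\big((1-x^2)(x^2-\lambda^2)\big)^{-s/2}$. First, the exponent is wrong: for $W(x)=|x|^{-s}$ the endpoint behaviour is $\big((b-x)(x-a)\big)^{-\frac{1-s}{2}}$ (Proposition \ref{prop_equi}), and the error is not cosmetic, since with exponent $-s/2$ your own continuity criterion ($s+\alpha<1$) fails for $s\ge 2/3$, whereas with the correct exponent one has $\frac{1+s}{2}<1$ for all $s\in(0,1)$. Second, and more importantly, there is no off-the-shelf Tricomi-type inversion formula on a union of two intervals in the form you need: the two-interval problem carries extra homogeneous solutions and solvability conditions, and establishing the asserted structure of the solution --- in particular the $C^1$ behaviour of $\mu_\lambda/\omega_{\lambda,1}$ up to the inner endpoints $\pm\lambda$ --- is exactly the hard content of the paper's Section \ref{sec_mulam}. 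The paper avoids any two-interval inversion: it uses only one-interval explicit formulas (the balayage kernel \eqref{prop_baldef_2} and the Kelvin transform, Proposition \ref{prop_K}, Corollary \ref{cor_eqpos}) together with a subtraction trick (removing a suitable multiple of $Bal_{\lambda/2}[\delta_0]$ so that the remaining density vanishes at $\lambda$) to prove Lemma \ref{lem_mucont}, from which the continuity of $V[\mu_\lambda]$ on $\mathbb{R}$ and the upgrade of \eqref{lem_lam1_1} from quasi-everywhere to everywhere on $K_{\lambda,1}$ follow. As written, your proposal defers precisely this crux to a citation that does not exist in the required generality, so neither the continuity claim nor even the fact that the Riesz representer is a measure is actually established.
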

See Figure \ref{fig2} for an illustration of $\mu_\lambda$ for various values of $s$ and $\lambda$.

\begin{figure}[htp!]
 	\begin{center}
 		\includegraphics[width=0.99\textwidth]{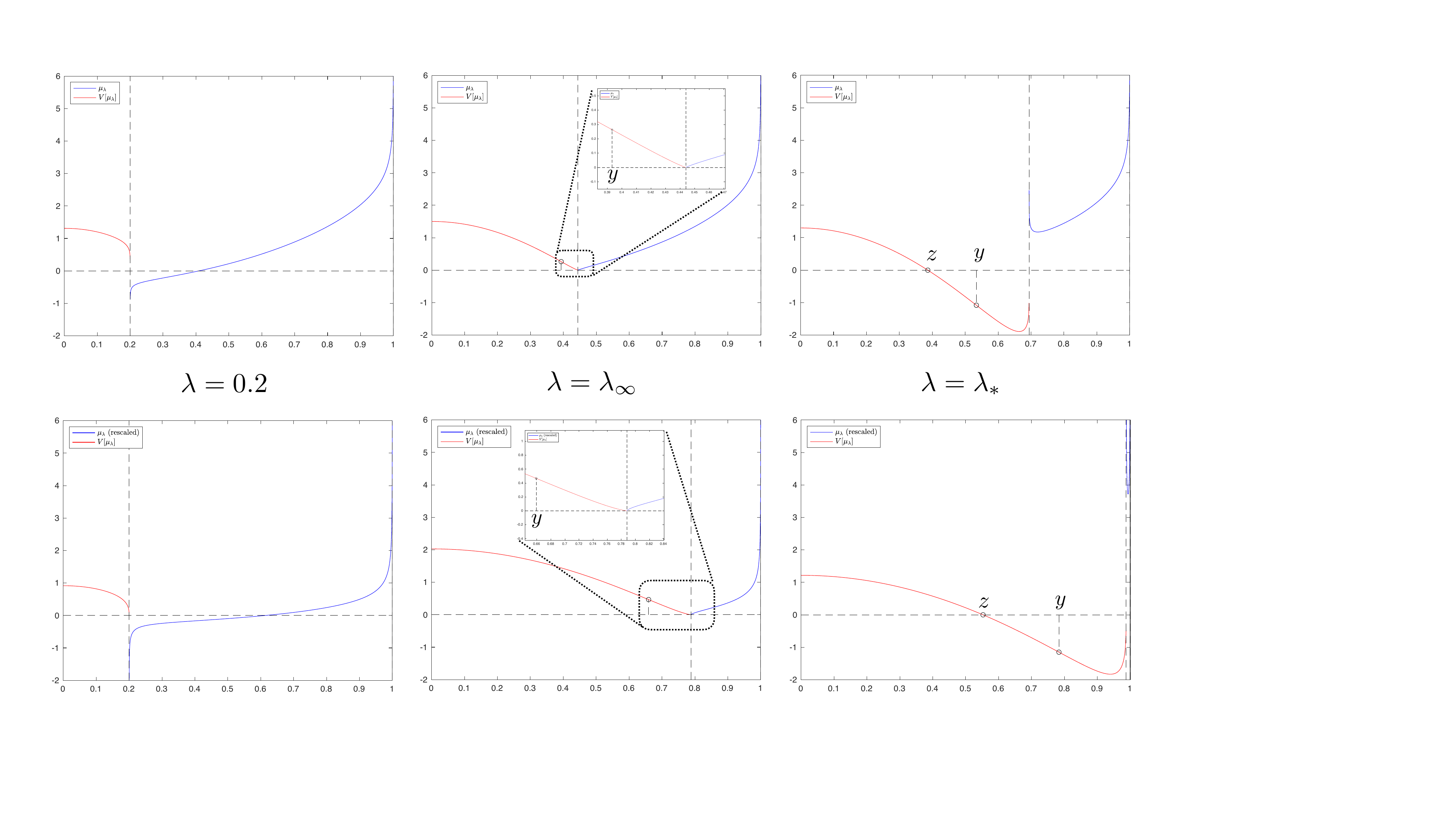}
 		
 		\caption{The signed measure $\mu_\lambda$ and the corresponding $V[\mu_\lambda]$. Upper row: $s=0.7$ (corresponding to the case in Figure \ref{fig1}), with $\lambda=0.2$, $\lambda=\lambda_\infty\approx 0.4440$ and $\lambda=\lambda_*\approx 0.6941$ (see Sections \ref{sec_iba} and \ref{sec_main} for the definition of $\lambda_\infty$ and $\lambda_*$). Lower row: $s=0.3$, with $\lambda=0.2$, $\lambda=\lambda_\infty\approx 0.7880$ and $\lambda=\lambda_*\approx 0.9876$. For this row, the graph of $\frac{1}{10}\mu_\lambda$ is sketched for better illustration. For $\lambda=\lambda_\infty$, the number $y$ appeared in the proof of Item 4 of Proposition \ref{prop_iba} is marked, and the graphs near $x=\lambda$ is zoomed in to show the regularity near this point. For $\lambda=\lambda_*$, the numbers $y$ and $z$ appeared in Section \ref{sec_main} are marked.}
 		\label{fig2}
 	\end{center}	
 \end{figure}

The proof of Lemma \ref{lem_lam1}, which is based on the balayage operator and an explicit construction of $\mu_0$, will be delayed to Section \ref{sec_mulam}.

For $0<\lambda<1$, denote
\begin{equation}\label{F}
    F(\lambda):= \int_{-\lambda}^\lambda V[\mu_\lambda](x)\rd{x}\,.
\end{equation}

\begin{lemma}\label{lem_lam2}
    Let $\lambda\in (0,1)$, $R_2>0$, and $\mu_\lambda$ as given in Lemma \ref{lem_lam1}. Denote $R_1=\lambda R_2$, and
    \begin{equation}
        \mu_{R_1,R_2}(x) = R_2^{1+s}\mu_{\lambda}\big(\frac{x}{R_2}\big)\,,
    \end{equation}
    as a signed measure supported on $K_{R_1,R_2}$. Then
    \begin{equation}\label{lem_lam2_1}
        V[\mu_{R_1,R_2}] = 0,\quad \textnormal{on }K_{R_1,R_2}\,,
    \end{equation}
    and
    \begin{equation}\label{lem_lam2_2}
        \int_{-R_1}^{R_1} V[\mu_{R_1,R_2}](x)\rd{x} = R_2^3 F(\lambda)\,.
    \end{equation}

    As a consequence, if $\mu_\lambda$ is positive and $F(\lambda)=0$, then with $R_2=\Big(\int_{K_{\lambda,1}}\mu_{\lambda}(x)\rd{x}\Big)^{-1/(2+s)}$, the assumptions of Lemma \ref{lem_reduct} are satisfied with $\rho=\mu_{R_1,R_2}$.
\end{lemma}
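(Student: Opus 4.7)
The plan is to prove this by a direct scaling computation, exploiting the homogeneity properties of the Riesz kernel $W(x)=|x|^{-s}$ and the quadratic external field $U(x)=-\frac{3}{1-s}x^2$. The key observation is that under the dilation $x\mapsto R_2 x$, $W$ scales with weight $R_2^{-s}$ and $U$ scales with weight $R_2^2$, while the Lebesgue measure on $\mathbb{R}$ contributes a factor $R_2$. The prefactor $R_2^{1+s}$ in the definition of $\mu_{R_1,R_2}$ is chosen exactly to balance these so that $V[\mu_{R_1,R_2}]$ transforms in a clean way.

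The first step is to substitute $t = R_2 y$ in the convolution integral
\begin{equation*}
(W*\mu_{R_1,R_2})(x) = \int_{K_{R_1,R_2}} |x-t|^{-s}\, R_2^{1+s}\mu_\lambda(t/R_2)\rd t,
\end{equation*}
which, together with the identity $|x-R_2 y|^{-s}=R_2^{-s}|x/R_2 - y|^{-s}$, collapses to $R_2^2 (W*\mu_\lambda)(x/R_2)$. Combined with $U(x)=R_2^2 U(x/R_2)$, this yields the fundamental scaling identity
\begin{equation*}
V[\mu_{R_1,R_2}](x) = R_2^2\, V[\mu_\lambda](x/R_2), \qquad x\in\mathbb{R}.
\end{equation*}
Since $x\in K_{R_1,R_2}$ is equivalent to $x/R_2\in K_{\lambda,1}$, the vanishing relation \eqref{lem_lam1_1} from Lemma \ref{lem_lam1} immediately gives \eqref{lem_lam2_1}. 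Continuity of $V[\mu_{R_1,R_2}]$ on $\mathbb{R}$ is likewise inherited from the corresponding property of $V[\mu_\lambda]$.

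For \eqref{lem_lam2_2}, I would apply the scaling identity inside the integral and change variables $x=R_2 y$:
\begin{equation*}
\int_{-R_1}^{R_1} V[\mu_{R_1,R_2}](x)\rd x = R_2^2\int_{-R_1}^{R_1} V[\mu_\lambda](x/R_2)\rd x = R_2^3\int_{-\lambda}^{\lambda} V[\mu_\lambda](y)\rd y = R_2^3 F(\lambda),
\end{equation*}
using $R_1=\lambda R_2$. This is the desired identity \eqref{lem_lam2_2}.

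For the consequence, assuming $\mu_\lambda\ge 0$ and $F(\lambda)=0$, the scaling $\mu_{R_1,R_2}(x)=R_2^{1+s}\mu_\lambda(x/R_2)$ is positive and has total mass $R_2^{2+s}\int_{K_{\lambda,1}}\mu_\lambda(y)\rd y$. Choosing $R_2 = (\int_{K_{\lambda,1}}\mu_\lambda(y)\rd y)^{-1/(2+s)}$ normalizes this to $1$, so $\mu_{R_1,R_2}\in\cM(K_{R_1,R_2})$. The hypothesis $F(\lambda)=0$ together with \eqref{lem_lam2_2} gives \eqref{lem_reduct_2}, while \eqref{lem_lam2_1} gives \eqref{lem_reduct_1}; continuity of $V[\mu_{R_1,R_2}]$ completes the verification of Lemma \ref{lem_reduct}'s hypotheses. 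There is no serious obstacle here — the only subtlety is tracking the scaling exponents carefully so that the density rescaling $R_2^{1+s}$, the Jacobian $R_2$, and the kernel homogeneity $R_2^{-s}$ combine correctly; the exponent $1+s$ in the definition of $\mu_{R_1,R_2}$ is precisely what makes $V[\mu_{R_1,R_2}]$ scale as $R_2^2$ times $V[\mu_\lambda]$ evaluated at the dilated argument.
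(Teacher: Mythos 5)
Your proposal is correct and follows essentially the same route as the paper: the scaling identity $V[\mu_{R_1,R_2}](x)=R_2^2\,V[\mu_\lambda](x/R_2)$ established by the change of variables in the convolution (as in \eqref{VR2calc}), the resulting verification of \eqref{lem_lam2_1} and \eqref{lem_lam2_2}, and the mass computation $R_2^{2+s}\int_{K_{\lambda,1}}\mu_\lambda\rd x$ fixing the normalizing choice of $R_2$. No gaps; the argument matches the paper's proof.
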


\begin{proof}
We first calculate
\begin{equation}\label{VR2calc}\begin{split}
    V[\mu_{R_1,R_2}](x) = & R_2^{1+s}\int_{K_{R_1,R_2}}|x-y|^{-s}\mu_{\lambda}\big(\frac{y}{R_2}\big)\rd{y} -\frac{3}{1-s}x^2 \\
    = & R_2^{2+s}\int_{K_{\lambda,1}}|x-R_2 y|^{-s}\mu_{\lambda}(y)\rd{y} -\frac{3}{1-s}x^2 \\
    = & R_2^{2}\int_{K_{\lambda,1}}\big|\frac{x}{R_2}- y\big|^{-s}\mu_{\lambda}(y)\rd{y} -\frac{3}{1-s}x^2 \\
    = & R_2^{2} V[\mu_\lambda]\big(\frac{x}{R_2}\big) \,.
\end{split}\end{equation}
If $x\in K_{R_1,R_2}$, then the last quantity is zero by \eqref{lem_lam1_1}. This gives \eqref{lem_lam2_1}. 

To get \eqref{lem_lam2_2}, we calculate
\begin{equation}
    \int_{-R_1}^{R_1} V[\mu_{R_1,R_2}](x)\rd{x} = R_2^{2}\int_{-R_1}^{R_1} V[\mu_\lambda]\big(\frac{x}{R_2}\big)\rd{x} = R_2^{3}\int_{-\lambda}^{\lambda} V[\mu_\lambda](x)\rd{x} = R_2^3 F(\lambda)\,.
\end{equation}

Further assume that $\mu_\lambda$ is positive and $F(\lambda)=0$. We calculate
\begin{equation}
    \int_{K_{R_1,R_2}}\mu_{R_1,R_2}(x)\rd{x} = R_2^{1+s}\int_{K_{R_1,R_2}}\mu_{\lambda}\big(\frac{x}{R_2}\big)\rd{x} = R_2^{2+s}\int_{K_{\lambda,1}}\mu_{\lambda}(x)\rd{x}  \,.
\end{equation}
Therefore, by choosing $R_2 = \Big(\int_{K_{1,\lambda}}\mu_{\lambda}(x)\rd{x}\Big)^{-1/(2+s)}$, we see that $\mu_{R_1,R_2}$ is nonnegative with $\int_{K_{R_1,R_2}}\mu_{R_1,R_2}(x)\rd{x}=1$, i.e., $\mu_{R_1,R_2}\in \cM(K_{R_1,R_2})$. The continuity of $V[\mu_{R_1,R_2}]$ on $\mathbb{R}$ follows from that of $V[\mu_\lambda]$; \eqref{lem_reduct_1} follows from \eqref{lem_lam2_1}; \eqref{lem_reduct_2} follows from \eqref{lem_lam2_2} and $F(\lambda)=0$.

\end{proof}

\section{Preparations}\label{sec_prep}

Within this section, we consider $0<s<1$ and $W(x)=|x|^{-s}$. We say a set $K\subset\mathbb{R}$ has \emph{positive capacity} if $\inf_{\rho\in\cM(K)}E_W[\rho]<\infty$, where $E_W[\rho]=\frac{1}{2}\int_{\mathbb{R}}(W*\rho)(x)\rd{\rho(x)}$, otherwise we say $K$ has \emph{zero capacity}. We say a condition is satisfied \emph{quasi-everywhere (q.e.)} on $K$ if the failure set has zero capacity.

\subsection{Basic results from the literature}

We first recall some known results from the literature, which can be found in \cite{DOSW25} (some of these are classical, and appeared in earlier references like \cite{Landkof,Wallin} as cited in \cite{DOSW25}). Many of these results can be formulated in $\mathbb{R}^d$, but we only use them in one dimension for our purposes.

\begin{proposition}\label{prop_equi}
    Let $K\subset\mathbb{R}$ be any compact set with positive capacity. Then there exists a unique element $\rho$ in $\cM(K)$, called the \emph{equilibrium measure} on $K$, satisfying the condition
    \begin{equation}\label{prop_equi_1}
        W*\rho = C_0,\quad\textnormal{q.e. }K\,,
    \end{equation}
    for some $C_0\in\mathbb{R}$. It is the unique minimizer of the energy $\frac{1}{2}\int_K (W*\rho)\rd{\rho}$ in $\cM(K)$, and it is the unique element in $\cM(K)$ satisfying \eqref{prop_equi_1}. The restriction of $\rho$ in the interior of $K$ is a real analytic function.
    
    If $K$ is a finite union of disjoint closed intervals, then near any endpoint $x_0$ of these intervals, the equilibrium measure $\rho$ behaves like $M|x-x_0|^{-\frac{1-s}{2}}$ for some $M>0$ depending on $K$ and $x_0$. As a consequence, $W*\rho$ is continuous on $\mathbb{R}$, and thus $W*\rho=C_0$ on $K$.

    The equilibrium measure on an interval $[a,b]$ is given by
    \begin{equation}
        \omega_{a,b}(x) = (b-a)^{-s}\frac{\Gamma(1+s)}{\Gamma(\frac{1+s}{2})^2}\cdot \big((b-x)(x-a)\big)_+^{-\frac{1-s}{2}}\,.
    \end{equation}
\end{proposition}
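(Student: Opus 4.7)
The plan is to combine strict positive-definiteness of the Riesz energy with classical inversion formulas for the Riesz singular integral on a union of intervals. First, observe that in one dimension the Fourier transform of $W(x)=|x|^{-s}$ is a positive multiple of $|\xi|^{s-1}$, so for every nontrivial compactly supported signed measure $\mu$ one has
\begin{equation*}
\int(W*\mu)(x)\rd{\mu(x)}=c_s\int|\hat\mu(\xi)|^2|\xi|^{s-1}\rd{\xi}>0,
\end{equation*}
which makes $\rho\mapsto\tfrac12\int(W*\rho)\rd{\rho}$ strictly convex on $\cM(K)$. Combined with weak-$*$ lower semicontinuity of the energy and weak-$*$ compactness of $\cM(K)$ (and finiteness of the infimum by the positive-capacity assumption), this yields a unique minimizer $\rho$. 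Taking first variations against admissible perturbations $(1-t)\rho+t\nu$, $\nu\in\cM(K)$, produces $W*\rho\ge C_0$ q.e.\ on $K$ with equality q.e.\ on $\supp\rho$; since $\rho$ puts no mass on polar sets, this becomes $W*\rho=C_0$ q.e.\ on $K$. Strict convexity makes the converse immediate: any element of $\cM(K)$ satisfying the q.e.\ identity is the unique minimizer.

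For real-analyticity in the interior, I would localize: for $x_0$ in the interior of $K$, split $\rho=\rho_1+\rho_2$ with $\rho_1$ the restriction to a small neighborhood of $x_0$ contained in $K$. Since $W*\rho_2$ is real-analytic near $x_0$ (a convolution against a mass supported away from $x_0$), the equation $W*\rho=C_0$ localizes to $W*\rho_1=C_0-W*\rho_2$ with analytic right-hand side. Applying the fractional Laplacian $(-\Delta)^{(1-s)/2}$ (which inverts convolution with $|x|^{-s}$ up to a positive constant) and invoking analytic elliptic regularity for this pseudodifferential operator yields analyticity of $\rho$ near $x_0$.

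The heart of the proposition is the finite-union-of-intervals case. I would first verify the single-interval formula directly: the change of variables $t=\tfrac{a+b}{2}+\tfrac{b-a}{2}\cos\theta$ reduces $\int_a^b\bigl((b-t)(t-a)\bigr)^{-(1-s)/2}|x-t|^{-s}\rd{t}$ to a $\theta$-integral independent of $x\in[a,b]$, with value expressible as a ratio of Gamma functions; the total-mass condition then pins down the constant. For $K=\bigcup_j[a_j,b_j]$, I would invoke the Söhngen-type inversion of the Riesz singular integral (the analogue of Tricomi's inversion in the Cauchy case): the equation $W*\rho=C_0$ on each component forces
\begin{equation*}
\rho(x)=P(x)\prod_j\bigl(|x-a_j||x-b_j|\bigr)^{-(1-s)/2}
\end{equation*}
on the interior, with $P$ real-analytic. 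The universal endpoint exponent $-(1-s)/2$ comes from local inversion of $|x|^{-s}$ at a half-line boundary, giving $\rho(x)\sim M|x-x_0|^{-(1-s)/2}$ with $M>0$ by positivity. Since $(1-s)/2<1$ the density is locally integrable, and a direct power-counting estimate on $\int|x-t|^{-s}|t-x_0|^{-(1-s)/2}\rd{t}$ near $x_0$ shows that $W*\rho$ extends continuously to $\mathbb{R}$, upgrading the q.e.\ identity to a pointwise identity on $K$. The main obstacle is establishing the Söhngen-type inversion on a disconnected $K$: one cannot restrict to a single component because $W*\rho$ couples all components, and the argument must combine local inversion at every endpoint with the analyticity gained in the previous step to rule out spurious solutions and identify the precise endpoint exponent.
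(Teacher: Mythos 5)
A point of comparison first: the paper does not prove Proposition \ref{prop_equi} at all — it is quoted essentially verbatim from \cite[Theorem 2.1]{DOSW25} (with the formula for $\omega_{a,b}$ from eq.\ (2.5) there), the only added remark being that continuity of $W*\rho$ follows from the stated endpoint behavior and interior regularity. You instead attempt a self-contained proof, which is legitimate but much heavier, and as written it has genuine gaps precisely at the points the citation is meant to cover.

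Concretely: (i) your variational argument gives $W*\rho\ge C_0$ q.e.\ on $K$ and $W*\rho\le C_0$ on $\supp\rho$, hence equality q.e.\ on $\supp\rho$; but \eqref{prop_equi_1} asserts equality q.e.\ on all of $K$, and the remark that ``$\rho$ puts no mass on polar sets'' does nothing on $K\setminus\supp\rho$. What is needed there is Frostman's maximum principle for Riesz kernels of order at most $2$ (cf.\ \cite{Landkof}), giving $W*\rho\le C_0$ everywhere; similarly, your claim that any element of $\cM(K)$ satisfying \eqref{prop_equi_1} is the minimizer tacitly assumes it has finite energy and charges no polar set. (ii) The endpoint statement is not only the exponent $-\frac{1-s}{2}$ but also $M>0$, and ``by positivity'' only yields $M\ge 0$: nonnegativity of $\rho$ does not exclude a density whose analytic prefactor vanishes at an endpoint. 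Note that the paper's Lemma \ref{lem_eqpos} takes $M>0$ from Proposition \ref{prop_equi} as an input, so that argument cannot be borrowed without circularity; one needs a genuine proof, e.g.\ via the Kelvin-transform/balayage representation of the multi-interval density or a maximum-principle comparison just outside the edge. (iii) The Söhngen-type inversion on a disconnected $K$ — which is exactly where the universal exponent, the analytic prefactor $P$, and its boundary positivity come from — is the heart of the multi-interval claim, and you explicitly leave it as ``the main obstacle'' rather than establishing it. So the proposal is an outline whose hardest steps are the ones left open; the efficient route is the paper's, namely to invoke \cite[Theorem 2.1]{DOSW25}, or else to carry out the inversion and the $M>0$ argument in full.
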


Here, $\Gamma$ is the usual Gamma function. This proposition is essentially \cite[Theorem 2.1]{DOSW25}, where the formula of $\omega_{a,b}$ is \cite[eq. (2.5)]{DOSW25}. The continuity of $W*\rho$ is a direct consequence of the boundary behavior and inner regularity of $\rho$.

\begin{proposition}\label{prop_baldef}
    Let $K\subset\mathbb{R}$ be any compact set with positive capacity, and $\mu$ be a positive measure on $\mathbb{R}$ with finite total mass. Then there exists a unique positive measure $\nu$ supported on $K$ satisfying the property    \begin{equation}\label{prop_baldef_1}
        (W*\nu)(x) = (W*\mu)(x),\quad \textnormal{q.e. }K\,.
    \end{equation}
    $\nu$ is denoted as $Bal(\mu,K)$. One has $\|Bal(\mu,K)\| \le \|\mu\|$ for any positive measure $\mu$ with finite total mass.

    If $K$ is a finite union of disjoint closed intervals and $y\notin K$, then  $W*Bal(\delta_y,K)$ is continuous on $\mathbb{R}$, and thus $W*Bal(\delta_y,K)=W*\delta_y$ on $K$.

    Let $I=[a,b]$ be a closed interval and $y\notin I$. Then 
    \begin{equation}\label{prop_baldef_2}
        Bal(\delta_y,[a,b])(x) = \frac{\cos\frac{\pi s}{2}}{\pi}\big((b-y)(a-y)\big)^{\frac{1-s}{2}} \cdot \big((b-x)(x-a)\big)_+^{-\frac{1-s}{2}} |x-y|^{-1}\,.
    \end{equation}
\end{proposition}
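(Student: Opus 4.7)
The strategy is to reduce each claim to classical Riesz potential theory and verify the explicit formula by direct substitution.

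\textbf{Existence and uniqueness.} Uniqueness follows from the classical Riesz uniqueness theorem: if $\nu_1,\nu_2$ are two positive measures on $K$ with $W*\nu_1=W*\mu=W*\nu_2$ q.e.\ on $K\supset\supp(\nu_1-\nu_2)$, then by strict positive-definiteness of the Riesz kernel for $0<s<1$ the signed measure $\nu_1-\nu_2$ must vanish. For existence when $\mu$ has finite Riesz $s$-energy, I would minimize $\nu\mapsto\frac{1}{2}\int W*\nu\,d\nu-\int W*\mu\,d\nu$ over positive measures supported on $K$; lower semicontinuity yields a minimizer, the variational inequalities coming from the positivity constraint give $W*\nu\ge W*\mu$ q.e.\ on $K$ with equality on $\supp\nu$, and regularity of $K^c$ combined with the Frostman domination principle upgrades this to equality q.e.\ on all of $K$. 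A general positive $\mu$ is handled by first constructing $Bal(\delta_y,K)$ (point masses are approximated from finite-energy measures, or handled directly via the explicit formula \eqref{prop_baldef_2} when $K$ is an interval and then a union of intervals by iteration) and then setting $Bal(\mu,K):=\int Bal(\delta_y,K)\,d\mu(y)$; linearity of the potential propagates the q.e.\ identity.

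\textbf{Mass bound and continuity.} The Frostman domination principle applied to $\mu$ and $Bal(\mu,K)$ gives $W*Bal(\mu,K)(x)\le W*\mu(x)$ for every $x\in\mathbb{R}$. Comparing leading asymptotics at infinity, where the potential of any compactly supported positive measure $\nu$ behaves like $\|\nu\|\cdot|x|^{-s}$, immediately yields $\|Bal(\mu,K)\|\le\|\mu\|$. For the continuity claim, when $K$ is a finite union of intervals and $y\notin K$, the density of $Bal(\delta_y,K)$ is real analytic inside $K$ and exhibits the integrable singularity $|x-x_0|^{-(1-s)/2}$ at each endpoint $x_0\in\partial K$ (the same exponent as in Proposition~\ref{prop_equi}). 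A standard continuity lemma for Riesz potentials of such densities then yields that $W*Bal(\delta_y,K)$ is continuous on all of $\mathbb{R}$, and since $W*\delta_y$ is continuous on $\mathbb{R}\setminus\{y\}\supset K$, the q.e.\ identity \eqref{prop_baldef_1} becomes a pointwise identity on $K$.

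\textbf{Explicit formula.} Let $\nu_y$ denote the right-hand side of \eqref{prop_baldef_2}. Nonnegativity and support in $[a,b]$ are immediate, so by uniqueness it suffices to check $W*\nu_y(x)=|x-y|^{-s}$ for $x\in(a,b)$. An affine substitution $t=\frac{a+b}{2}+\frac{b-a}{2}\tau$, $x=\frac{a+b}{2}+\frac{b-a}{2}X$, $y=\frac{a+b}{2}+\frac{b-a}{2}Y$ (so that $X\in(-1,1)$ and $|Y|>1$) reduces the identity to the classical singular-integral evaluation
\[
\int_{-1}^{1}|X-\tau|^{-s}(1-\tau^2)^{-(1-s)/2}|\tau-Y|^{-1}\,d\tau \;=\; \frac{\pi}{\cos(\pi s/2)}\,(Y^2-1)^{-(1-s)/2}|X-Y|^{-s},
\]
which appears in Landkof's monograph and can alternatively be obtained by contour integration or by reduction to a known hypergeometric identity. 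The main obstacle, in my view, is tracking the constant $\cos(\pi s/2)/\pi$ correctly through this evaluation; everything else follows routinely from the classical Riesz potential machinery once the constant is pinned down.
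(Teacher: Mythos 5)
Your proposal is correct in outline, but it is worth noting that the paper does not actually prove this proposition: it is imported wholesale from \cite[Propositions 2.3, 2.6]{DOSW25}, which in turn rest on classical Riesz potential theory (Landkof). What you have written is essentially a reconstruction of those classical arguments: existence of $Bal(\mu,K)$ via the obstacle-type variational problem plus the domination principle (valid here since the kernel $|x|^{-s}$ corresponds to Riesz parameter $\alpha=1-s<2$), the mass bound via $|x|^{s}(W*\nu)(x)\to\|\nu\|$ at infinity, continuity of $W*Bal(\delta_y,K)$ from the interior analyticity and the integrable endpoint singularity of exponent $\frac{1-s}{2}$, and verification of \eqref{prop_baldef_2} by reducing, through the affine change of variables, to the classical evaluation of $\int_{-1}^{1}|X-\tau|^{-s}(1-\tau^2)^{-\frac{1-s}{2}}|\tau-Y|^{-1}\rd\tau$; I checked that your scaling bookkeeping is consistent and that the constant $\frac{\cos(\pi s/2)}{\pi}$ agrees with Landkof's ball-balayage formula specialized to $d=1$, $\alpha=1-s$. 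Two points are stated more casually than they deserve, though both are standard and are handled in the cited references: (i) uniqueness is asserted for arbitrary positive measures satisfying \eqref{prop_baldef_1}, so the energy/positive-definiteness argument needs the usual care that quasi-everywhere equality can be integrated against the measures (finite energy, or an approximation argument), since the candidate measures are not a priori of finite energy; and (ii) the asymptotic comparison at infinity giving $\|Bal(\mu,K)\|\le\|\mu\|$ is immediate for compactly supported $\mu$, but for a general finite-mass $\mu$ one must take the limit along points avoiding the mass of $\mu$ (or first truncate $\mu$), since $\limsup_{|x|\to\infty}|x|^{s}(W*\mu)(x)$ need not equal $\|\mu\|$ without such care. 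With these caveats your route is the same as the one underlying the literature the paper cites, so nothing essential is missing.
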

This proposition comes from \cite[Propositions 2.3, 2.6]{DOSW25} and a summary of the paragraphs above it. We remark that this version of balayage is slightly different from the operator $\widetilde{Bal}$ introduced in \eqref{eq31}. In fact, $\widetilde{Bal}$ keeps the total mass of the measure but allows the potential to shift by a constant $C$; $Bal$ keeps the potential the same (c.f. \eqref{prop_baldef_1}) but allows the total mass to change.

For a signed measure $\mu$ with $||\mu||$ having finite total mass, one can decompose $\mu$ into positive and negative parts as $\mu=\mu_+-\mu_-$ and define
\begin{equation}
    Bal(\mu,K):=Bal(\mu_+,K)-Bal(\mu_-,K)\,.
\end{equation}

For fixed $X\in \mathbb{R}$ and any $x\ne X$, denote the Kelvin transform
\begin{equation}
    x^* = \frac{1}{x-X}+X\,.
\end{equation}
For a compact subset $K$ of $\mathbb{R}$ that does not contain $X$, denote
\begin{equation}
    K^* = \{x^*: x\in K\}\,.
\end{equation}

\begin{proposition}[{\cite[Lemma 2.4]{DOSW25}}]\label{prop_K}
    Let $X\in\mathbb{R}$, and $K\subset\mathbb{R}$ be a compact set that does not contain $X$. Let $\rho_{eq}$ be the equilibrium measure on $K^*$. Then
    \begin{equation}
        Bal(\delta_X,K)(x):=\frac{1}{C_0}\rho_{eq}(x^*)|x-X|^{s-2}\,,
    \end{equation}
    where $C_0$ is as appeared in Proposition \ref{prop_equi_1} for $K^*$.
\end{proposition}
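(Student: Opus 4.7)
The plan is to verify the defining property of the balayage measure directly and then invoke uniqueness from Proposition \ref{prop_baldef}. Let $\nu$ denote the candidate measure on $K$ with density $\frac{1}{C_0}\rho_{eq}(x^*)|x-X|^{s-2}$. It suffices to check that $\nu$ is a positive measure supported on $K$ with finite total mass, and that $(W*\nu)(x)=|x-X|^{-s}$ q.e.\ on $K$. Since $K$ is compact and bounded away from $X$, the image $K^*$ under the Kelvin map $x\mapsto x^*$ is also compact and $X\notin K^*$, so the equilibrium measure $\rho_{eq}$ on $K^*$ and its Robin constant $C_0$ are well-defined by Proposition \ref{prop_equi}, and $(W*\rho_{eq})(X)$ is finite.

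The algebraic heart of the argument is the identity $x^*-y^*=(y-x)/((x-X)(y-X))$, which gives
\begin{equation*}
    |x-y|^{-s}=|x^*-y^*|^{-s}|x-X|^{-s}|y-X|^{-s},
\end{equation*}
together with the substitution $u=y^*$, whose Jacobian reads $\rd{y}=|u-X|^{-2}\rd{u}$ and which satisfies $|y-X|=|u-X|^{-1}$. I would substitute these into
\begin{equation*}
    (W*\nu)(x)=\frac{1}{C_0}\int_K |x-y|^{-s}\rho_{eq}(y^*)|y-X|^{s-2}\rd{y},
\end{equation*}
and observe that every power of $|y-X|$ cancels: the $|y-X|^{s-2}$ in the density, the $|y-X|^{-s}$ pulled out of $|x-y|^{-s}$, and the Jacobian factor $|u-X|^{-2}$ combine to leave a clean $\rd{u}$. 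What remains is
\begin{equation*}
    (W*\nu)(x)=\frac{|x-X|^{-s}}{C_0}\int_{K^*}|x^*-u|^{-s}\rho_{eq}(u)\rd{u}=\frac{|x-X|^{-s}}{C_0}(W*\rho_{eq})(x^*).
\end{equation*}
For $x\in K$ one has $x^*\in K^*$, and by Proposition \ref{prop_equi}, $(W*\rho_{eq})(x^*)=C_0$ q.e.\ on $K^*$; hence $(W*\nu)(x)=|x-X|^{-s}=(W*\delta_X)(x)$ q.e.\ on $K$. Running the same substitution on the total mass yields $\|\nu\|=(W*\rho_{eq})(X)/C_0<\infty$, while positivity of $\nu$ and its support in $K$ are immediate from the definition.

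The main obstacle is twofold. First is the bookkeeping of the Kelvin substitution: one must track how the factor $|x-X|^{s-2}$ in the density, the factor $|y-X|^{-s}$ pulled out of the Riesz kernel, and the Jacobian $|u-X|^{-2}$ conspire to eliminate every power of $|y-X|$ and leave a single prefactor $|x-X|^{-s}$ outside the integral. Second is justifying that the q.e.\ identity on $K^*$ transports to a q.e.\ identity on $K$ under the Kelvin map; this is immediate because the map is a smooth diffeomorphism of $\mathbb{R}\setminus\{X\}$ whose distortion of the Riesz kernel is bounded above and below on any compact set disjoint from $X$, so sets of zero Riesz capacity correspond. With both in hand, Proposition \ref{prop_baldef} identifies $\nu=Bal(\delta_X,K)$.
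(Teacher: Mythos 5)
Your proposal is correct. Note that the paper itself gives no proof of Proposition \ref{prop_K} --- it is quoted verbatim from \cite[Lemma 2.4]{DOSW25} --- so your direct verification is a genuine addition, and it is the standard Kelvin-transform argument one would expect to find in the cited source: the identity $|x-y|^{-s}=|x^*-y^*|^{-s}|x-X|^{-s}|y-X|^{-s}$ combined with the substitution $u=y^*$ (Jacobian $|u-X|^{-2}$, $|y-X|=|u-X|^{-1}$) reduces $W*\nu$ on $K$ to $|x-X|^{-s}(W*\rho_{eq})(x^*)/C_0$, the equilibrium property of Proposition \ref{prop_equi} on $K^*$ supplies the value $C_0$ quasi-everywhere, and the uniqueness clause of Proposition \ref{prop_baldef} then identifies your candidate with $Bal(\delta_X,K)$. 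The two points that could have hidden a gap are both handled: the normalization by $C_0$ is exactly what makes $(W*\nu)(x)=|x-X|^{-s}=(W*\delta_X)(x)$ q.e.\ on $K$, and the transfer of the exceptional set from $K^*$ back to $K$ is legitimate because on compact sets disjoint from $X$ the Kelvin map is bi-Lipschitz and distorts the Riesz kernel by bounded factors, so finite-energy measures push forward to finite-energy measures and zero-capacity sets correspond. Your mass computation $\|\nu\|=(W*\rho_{eq})(X)/C_0<\infty$ (finite since $X\notin K^*$ and $K^*$ is compact) completes the hypotheses needed to invoke Proposition \ref{prop_baldef}, so the argument stands as a self-contained proof of the quoted lemma.
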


Denote
\begin{equation}
    \omega_1:=\omega_{-1,1} = 2^{-s}\frac{\Gamma(1+s)}{\Gamma(\frac{1+s}{2})^2}\cdot (1-x^2)_+^{-\frac{1-s}{2}}\,,
\end{equation}
as the equilibrium measure on $[-1,1]$. Also denote
\begin{equation}\label{ballamdef}
    Bal_\lambda[\cdot] = Bal(\cdot,K_{\lambda,1})\,.
\end{equation}

The following result is crucial for the iterated balayage algorithm in \cite{DOSW25} and the one we will introduce in Section \ref{sec_iba}. As mentioned in \cite[Remark 6.3]{DOSW25}, currently it is only available in one dimension.
\begin{proposition}[{\cite[Lemma 6.4]{DOSW25}}]\label{prop_blam}
    Let $\lambda\in (0,1)$ and $\mu$ be a positive measure supported on $(-\lambda,\lambda)$. Then 
    \begin{equation}
        \frac{Bal_\lambda[\mu]}{\omega_1}
    \end{equation}
    is decreasing on $[\lambda,1]$.
\end{proposition}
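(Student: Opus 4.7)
The plan is to reduce to a single point mass, apply the Kelvin-transform representation of Proposition~\ref{prop_K}, and translate the monotonicity into a statement about a specific regular function of the equilibrium density on a two-interval set.

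Since $\mu\mapsto Bal_\lambda[\mu]$ is additive on positive measures and $\omega_1$ is independent of $\mu$, by approximating a general positive $\mu$ by positive linear combinations of point masses it suffices to prove the claim for $\mu=\delta_X$ with $X\in(-\lambda,\lambda)$.

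Fix such $X$ and apply Proposition~\ref{prop_K} with $K=K_{\lambda,1}$: the Kelvin transform $\Phi(x)=X+1/(x-X)$ sends $K_{\lambda,1}$ to a disjoint union $K_{\lambda,1}^{*}=[a_1,b_1]\cup[a_2,b_2]$ whose four endpoints, corresponding via $\Phi$ to $x\in\{-\lambda,-1,1,\lambda\}$, are explicit in $\lambda,X$. A short algebraic manipulation yields the identity
\[
(y-X)^{2}(1-x^{2})=(1-X^{2})(y-a_2)(y-b_1),\qquad y:=\Phi(x).
\]
Substituting into the representation $Bal_\lambda[\delta_X](x)=\tfrac{1}{C^{*}}\rho^{*}(x^{*})|x-X|^{s-2}$ from Proposition~\ref{prop_K} and dividing by the explicit formula for $\omega_1$ from Proposition~\ref{prop_equi}, all transcendental factors simplify to give
\[
\frac{Bal_\lambda[\delta_X](x)}{\omega_1(x)}=C_X\cdot\rho^{*}(y)\,(y-X)\,\bigl[(y-a_2)(y-b_1)\bigr]^{(1-s)/2}
\]
for a positive constant $C_X=C_X(\lambda,X)$, where $\rho^{*}$ is the equilibrium density on $K_{\lambda,1}^{*}$ and $C^{*}$ its equilibrium constant. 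For $x\in[\lambda,1]$, $y$ lies in $[a_2,b_2]$ and strictly decreases as $x$ increases; thus the desired conclusion is equivalent to the claim that $G(y):=\rho^{*}(y)(y-X)\bigl[(y-a_2)(y-b_1)\bigr]^{(1-s)/2}$ is strictly increasing on $[a_2,b_2]$.

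The main obstacle is establishing this monotonicity of $G$, since the equilibrium density on a general two-interval set admits no elementary closed form and is typically $U$-shaped on $[a_2,b_2]$. To handle this I would compute
\[
\frac{G'(y)}{G(y)}=\frac{(\rho^{*})'(y)}{\rho^{*}(y)}+\frac{1}{y-X}+\frac{1-s}{2}\left(\frac{1}{y-a_2}+\frac{1}{y-b_1}\right)
\]
and verify positivity on $(a_2,b_2)$. The singular term $(1-s)/[2(y-a_2)]$ is exactly cancelled near $a_2$ by the endpoint asymptotic $\rho^{*}(y)\sim M(y-a_2)^{-(1-s)/2}$, so $G'/G$ is bounded there; near $b_2$ all remaining contributions are positive and $G'/G\to+\infty$. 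The delicate interior estimate relies on the singular integral equation for $\rho^{*}$ obtained by differentiating the Euler--Lagrange condition $W\ast\rho^{*}\equiv C^{*}$ on $K_{\lambda,1}^{*}$, combined with $y-X>0$ throughout $[a_2,b_2]$ (since $X<\lambda<a_2$). This interior estimate is the technical crux of the argument, and is presumably where the strictly one-dimensional character of the result enters.
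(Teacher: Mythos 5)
First, note that the paper does not prove this proposition at all: it is imported verbatim from \cite[Lemma 6.4]{DOSW25}, so there is no internal proof to compare your route against; the only question is whether your argument would stand on its own.

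Your reductions are correct as far as they go: superposition over point masses is legitimate (better phrased via $Bal_\lambda[\mu]=\int Bal_\lambda[\delta_X]\,\rd\mu(X)$, which follows from linearity of the potential identity and uniqueness in Proposition \ref{prop_baldef}, rather than via an unjustified approximation-and-limit of densities), and your Kelvin-transform algebra checks out: with $y=X+1/(x-X)$ one indeed has $(y-X)^2(1-x^2)=(1-X^2)(y-a_2)(y-b_1)$, so the ratio $Bal_\lambda[\delta_X]/\omega_1$ equals a positive constant times $G(y)=\rho^*(y)(y-X)\bigl[(y-a_2)(y-b_1)\bigr]^{(1-s)/2}$ with $y$ decreasing in $x$. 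The genuine gap is that the proof stops exactly where the lemma's content lies: you never establish that $G$ is increasing on $[a_2,b_2]$. The boundary analysis (cancellation of the $(1-s)/(2(y-a_2))$ term against the endpoint blow-up of $\rho^*$, and the divergence at $b_2$) only shows $G'/G$ is finite at one end and positive at the other; in the interior, $\rho^*$ is the equilibrium density of a genuinely two-interval set with no closed form, its logarithmic derivative is negative on a substantial left portion of $[a_2,b_2]$, and whether the terms $1/(y-X)$ and $\tfrac{1-s}{2}\bigl(\tfrac1{y-a_2}+\tfrac1{y-b_1}\bigr)$ dominate it is precisely the nontrivial claim. Saying that one ``would'' verify positivity using the singular integral equation for $\rho^*$ is a pointer, not an argument, and you acknowledge as much (``the technical crux,'' ``presumably where the one-dimensional character enters''). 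As written, the proposal is a correct reformulation of \cite[Lemma 6.4]{DOSW25}, not a proof of it; to complete it you would need an actual quantitative lower bound on $(\rho^*)'/\rho^*$ on $(a_2,b_2)$ (or a different mechanism entirely, e.g.\ the argument in \cite{DOSW25} itself), and nothing in the proposal indicates how that bound would be obtained.
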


\subsection{Some further results}

In this subsection we give some further results related to the equilibrium measure and the balayage operator.

\begin{lemma}\label{lem_eqpos}
    If $K$ is a finite union of disjoint closed intervals, and $I=[a,b]$ is one of the intervals, then the equilibrium measure $\rho$ on $K$ satisfies that
    \begin{equation}\label{lem_eqpos_1}
        \rho(x) = \omega_{a,b}(x)\Big(C-\int_{K\backslash I}   |x-y|^{-1} w(y)\rd{y}\Big)\,,
    \end{equation}
    on $I$, where $C>0$ and
    \begin{equation}
        w(y) = (b-a)^{s}\frac{\Gamma(\frac{1+s}{2})^2}{\Gamma(1+s)} \cdot\frac{\cos\frac{\pi s}{2}}{\pi}\big((b-y)(a-y)\big)^{\frac{1-s}{2}}\rho(y) \ge 0\,.
    \end{equation}
    As a consequence,
    \begin{equation}
        \frac{\rho(x)}{\omega_{a,b}(x)}
    \end{equation}
    is a strictly positive and real analytic function on $I$.
\end{lemma}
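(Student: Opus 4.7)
The plan is to split $\rho$ and use balayage to land in a single-interval equilibrium problem on $I$. Write $\rho = \rho|_I + \rho|_{K\setminus I}$, and set $\nu := Bal(\rho|_{K\setminus I},\,I)$. By linearity of the balayage operator and Proposition~\ref{prop_baldef}, the potential $W*\nu$ equals $W*(\rho|_{K\setminus I})$ q.e.\ on $I$. Since $W*\rho = C_0$ on $K$ by Proposition~\ref{prop_equi}, we obtain $W*(\rho|_I + \nu) = C_0$ q.e.\ on $I$. The positive measure $\sigma := \rho|_I + \nu$ therefore has constant Riesz potential on $I$, and the uniqueness part of Proposition~\ref{prop_equi} (applied after normalizing $\sigma$ by its total mass) gives $\sigma = C\,\omega_{a,b}$ for some $C > 0$. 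Hence $\rho|_I = C\,\omega_{a,b} - \nu$ on $I$.

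Next I would make $\nu$ explicit. Substituting the closed-form expression for $Bal(\delta_y,[a,b])$ from Proposition~\ref{prop_baldef} into $\nu(x) = \int_{K\setminus I} Bal(\delta_y,I)(x)\rd{\rho(y)}$ and pulling out the factor $((b-x)(x-a))^{-(1-s)/2}$ as a constant multiple of $\omega_{a,b}(x)$ via the explicit formula for $\omega_{a,b}$, one arrives at $\nu(x) = \omega_{a,b}(x)\int_{K\setminus I}|x-y|^{-1}w(y)\rd{y}$ with $w$ precisely as stated in the lemma. Nonnegativity of $w$ is immediate from $\rho \geq 0$ and $(b-y)(a-y) > 0$ for $y \in K\setminus I$. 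Dividing the identity $\rho|_I = C\omega_{a,b}-\nu$ by $\omega_{a,b}$ yields \eqref{lem_eqpos_1}. Real analyticity of $\rho/\omega_{a,b}$ on all of $[a,b]$ follows because $K\setminus I$ has positive distance from $I$, so $x \mapsto \int_{K\setminus I}|x-y|^{-1}w(y)\rd{y}$ extends to a holomorphic function of $x$ in a complex neighborhood of $[a,b]$; integrability of $w$ near the endpoints of the other components of $K$ comes from the boundary behavior $\rho(y) \sim M|y-y_0|^{-(1-s)/2}$ of Proposition~\ref{prop_equi}.

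The main obstacle will be the strict positivity of $g(x) := \rho(x)/\omega_{a,b}(x)$ on $I$, and my plan for this is a short convexity argument. For each fixed $y \notin [a,b]$, the function $x \mapsto |x-y|^{-1}$ has second derivative $2|x-y|^{-3} > 0$, so it is strictly convex on $[a,b]$. Integrating against $w(y)\rd{y} \geq 0$, the function $h(x) := \int_{K\setminus I}|x-y|^{-1}w(y)\rd{y}$ is convex, and hence $g = C - h$ is concave on $[a,b]$. For the endpoint values, Proposition~\ref{prop_equi} gives $\rho(x) \sim M|x-x_0|^{-(1-s)/2}$ with $M > 0$ near each endpoint $x_0 \in \{a,b\}$, while $\omega_{a,b}(x)$ has blow-up of exactly the same order with a positive constant coefficient, so $g$ extends continuously to $[a,b]$ with $g(a), g(b) > 0$. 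Concavity on $[a,b]$ together with positivity at the endpoints then forces
\[ g(x) \geq \frac{b-x}{b-a}g(a) + \frac{x-a}{b-a}g(b) > 0 \qquad \text{for all } x \in [a,b], \]
which completes the proof.
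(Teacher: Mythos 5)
Your proof is correct. The derivation of the representation \eqref{lem_eqpos_1} is essentially the paper's own argument: decompose $\rho$ into $\rho\chi_I+\rho\chi_{K\setminus I}$, sweep the outer part onto $I$ via the explicit kernel \eqref{prop_baldef_2}, and identify $\rho\chi_I+\nu$ with a positive multiple of $\omega_{a,b}$ through the uniqueness of the equilibrium measure; the analyticity argument (positive distance of $K\setminus I$ from $I$, integrability of $w$) also matches. Where you genuinely depart from the paper is the strict positivity of $g=\rho/\omega_{a,b}$. The paper argues by contradiction at a hypothetical interior zero $x_0$: real analyticity together with $\rho\ge 0$ forces $\rho(x)=O((x-x_0)^2)$, which justifies differentiating $W*\rho=C_0$ twice at $x_0$ and gives $0=(W*\rho)''(x_0)=\int_K s(s+1)|x_0-y|^{-s-2}\rho(y)\rd y>0$, a contradiction. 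You instead note that $h(x)=\int_{K\setminus I}|x-y|^{-1}w(y)\rd y$ is convex on $[a,b]$ (each kernel $|x-y|^{-1}$ with $y\notin[a,b]$ is convex there and $w\ge 0$), so $g=C-h$ is concave, and the endpoint asymptotics of Proposition \ref{prop_equi} give $g(a),g(b)>0$; the chord inequality then yields $g>0$ on all of $[a,b]$. Both routes are sound: yours is more elementary, exploits the representation formula you have just established, and sidesteps the differentiation-under-the-integral justification implicit in the paper's contradiction argument, while the paper's argument is independent of the single-interval representation (it uses only analyticity and the strict superharmonicity-type positivity of $(W*\rho)''$) and so applies directly to interior zeros of equilibrium densities in more general situations.
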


\begin{proof}
Write
\begin{equation}
    \rho_1 = \rho \chi_I,\quad \rho_2 = \rho \chi_{K\backslash I}\,.
\end{equation}
We have $W*\rho_1 + W*\rho_2 = C_0$ on $K$. Using \eqref{prop_baldef_2} and integrating against $\rho_2(y)\rd{y}$, for any $x\in I$ we have $(W*\rho_2)(x)=(W*\rho_3)(x)$, where
\begin{equation}\label{rho3}
    \rho_3(x) = \omega_{a,b}(x)\int_{K\backslash I}   |x-y|^{-1} w(y)\rd{y}\,,
\end{equation}
is supported on $I$. Then we see that $W*(\rho_1+\rho_3)=C_0$ on $I$. This implies that $\rho_1+\rho_3$ is a positive constant multiple of $\omega_{a,b}$, and \eqref{lem_eqpos_1} follows. Since $\frac{\rho_3}{\omega_{a,b}}$ is a real analytic function on $I$ by \eqref{rho3}, so is $\frac{\rho}{\omega_{a,b}}$.

Proposition \ref{prop_equi} already gives that $\lim_{x\rightarrow a^+}\frac{\rho(x)}{\omega_{a,b}(x)}=M>0$ and similarly for $\lim_{x\rightarrow b^-}$. Therefore, to show the positivity of $\frac{\rho(x)}{\omega_{a,b}(x)}$ on $[a,b]$, it suffices to do it for $x_0\in (a,b)$. Assume the contrary that $\rho(x_0)=0$, then $\rho(x)$ is $O((x-x_0)^2)$ near $x_0$ because it is real analytic. This allows us to calculate 
\begin{equation}
    0=(W*\rho)''(x_0) = \int_K s(s+1)|x_0-y|^{-s-2}\rho(y)\rd{y} > 0\,,
\end{equation}
leading to a contradiction.

\end{proof}

The following corollary is a direct consequence of Proposition \ref{prop_K} and Lemma \ref{lem_eqpos}.
\begin{corollary}\label{cor_eqpos}
    Let $K$ be a finite union of disjoint closed intervals, $X\notin K$, and $I=[a,b]$ is one of the intervals. Then 
    \begin{equation}
        \frac{Bal(\delta_X,K)}{\omega_{a,b}}
    \end{equation}
    is a strictly positive and real analytic function on $I$.
\end{corollary}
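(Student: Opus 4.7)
The plan is to combine the representation from Proposition \ref{prop_K} with the structural result from Lemma \ref{lem_eqpos}, tracking what happens under the Kelvin transform. By Proposition \ref{prop_K}, $Bal(\delta_X,K)(x) = C_0^{-1}\rho_{eq}(x^*)|x-X|^{s-2}$ where $\rho_{eq}$ is the equilibrium measure on $K^*$. So I only need to understand $\rho_{eq}(x^*)/\omega_{a,b}(x)$ as a function of $x\in I$ and show that, together with the factor $|x-X|^{s-2}$, it produces something strictly positive and real analytic on $I$.

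First I would check that the Kelvin transform $x\mapsto x^* = (x-X)^{-1}+X$ is a real-analytic diffeomorphism on each of the half-lines $(-\infty,X)$ and $(X,\infty)$. Since $X\notin K$, the interval $I=[a,b]$ lies in one of these half-lines, and $I^*=\{x^*:x\in I\}$ is therefore a closed interval contained in $K^*$; in fact it is one of the constituent intervals of $K^*$ (since the transform is a homeomorphism $K\to K^*$). Lemma \ref{lem_eqpos} applied to $\rho_{eq}$ on $K^*$ at the interval $I^*$ then gives that $\rho_{eq}(y)/\omega_{I^*}(y)$ is strictly positive and real analytic on $I^*$. Pulling back along $x\mapsto x^*$ shows $h(x):=\rho_{eq}(x^*)/\omega_{I^*}(x^*)$ is strictly positive and real analytic on $I$.

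The remaining step is an explicit identity for $\omega_{I^*}(x^*)$ in terms of $\omega_{a,b}(x)$. Writing the endpoints of $I^*$ as $a^*,b^*$ and using
\[
(b^*-x^*)(x^*-a^*) = \frac{(b-x)(x-a)}{(a-X)(b-X)(x-X)^2}, \qquad |b^*-a^*| = \frac{b-a}{|(a-X)(b-X)|},
\]
a direct substitution into the explicit formula for $\omega_{a^*,b^*}$ in Proposition \ref{prop_equi} gives
\[
\omega_{I^*}(x^*) = \tilde{C}(a,b,X)\,\omega_{a,b}(x)\,|x-X|^{1-s},
\]
where $\tilde{C}(a,b,X)>0$ depends only on $a,b,X$. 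Substituting back,
\[
\frac{Bal(\delta_X,K)(x)}{\omega_{a,b}(x)} = \frac{\tilde C}{C_0}\,h(x)\,|x-X|^{-1},
\]
and since $X\notin[a,b]$, the factor $|x-X|^{-1}$ is strictly positive and real analytic on $I$, while $h$ has the same properties by Step~2. Multiplying finishes the proof.

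The only potential obstacle is bookkeeping in Step 3: one must be careful that the Kelvin transform reverses orientation on the half-line, so the endpoints $a^*,b^*$ may be swapped relative to $a,b$. However, the formula for $\omega_{a,b}$ is symmetric in $a\leftrightarrow b$ once one interprets $(b-a)^{-s}$ as $|b-a|^{-s}$ (the product $(b-x)(x-a)$ is unchanged by the swap), so the ordering issue is harmless.
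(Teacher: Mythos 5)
Your proof is correct and follows essentially the same route as the paper, which simply notes that the corollary is a direct consequence of Proposition \ref{prop_K} and Lemma \ref{lem_eqpos}; you have merely made explicit the Kelvin-transform bookkeeping (that $I^*$ is a constituent interval of $K^*$ and that $\omega_{I^*}(x^*)=\tilde C\,\omega_{a,b}(x)\,|x-X|^{1-s}$), and your exponent count correctly yields the harmless factor $|x-X|^{-1}$.
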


Then we give a simple comparison principle for the balayage operator.

\begin{lemma}\label{lem_bup}
    Let $K_1\subset K_2$ be compact subsets of $\mathbb{R}$, and $\mu$ be a positive measure with finite total mass. Then
    \begin{equation}\label{lem_bup_1}
        Bal(\mu,K_2) \le Bal(\mu,K_1)\,,
    \end{equation}
    on $K_1$.
\end{lemma}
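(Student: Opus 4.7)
The strategy is to identify $Bal(\mu,K_2)\big|_{K_1}$ as a ``piece'' of $Bal(\mu,K_1)$ by constructing a candidate positive measure on $K_1$ whose Riesz potential matches that of $\mu$ quasi-everywhere on $K_1$, and then invoking the uniqueness clause of Proposition \ref{prop_baldef}. Throughout, set $\nu_1 := Bal(\mu,K_1)$ and $\nu_2 := Bal(\mu,K_2)$; the lemma is nontrivial only when $K_1$ has positive capacity, which I take as implicit so that $\nu_1$ is defined.

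\textbf{Steps.} Since $\nu_2$ is a positive measure supported on $K_2 \supset K_1$, decompose
$$\nu_2 \;=\; \nu_2\big|_{K_1} + \rho, \qquad \rho := \nu_2\big|_{K_2\setminus K_1},$$
with both summands positive. Consider the positive measure
$$\tau \;:=\; \nu_2\big|_{K_1} + Bal(\rho,K_1),$$
supported on $K_1$. I will compute its potential quasi-everywhere on $K_1$ by combining the defining q.e.\ identity $W*Bal(\rho,K_1) = W*\rho$ on $K_1$ (from Proposition \ref{prop_baldef} applied to $\rho$) with $W*\nu_2 = W*\mu$ q.e.\ on $K_2$, which specializes to the subset $K_1$. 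This yields
$$W*\tau \;=\; W*\nu_2\big|_{K_1} + W*\rho \;=\; W*\nu_2 \;=\; W*\mu, \qquad \text{q.e.\ on }K_1.$$
Since $\tau$ is a positive measure supported on $K_1$ whose potential matches $W*\mu$ quasi-everywhere on $K_1$, the uniqueness clause of Proposition \ref{prop_baldef} forces $\tau = \nu_1$, i.e.,
$$\nu_1 - \nu_2\big|_{K_1} \;=\; Bal(\rho,K_1) \;\ge\; 0.$$
This is the desired inequality $Bal(\mu,K_2) \le Bal(\mu,K_1)$ on $K_1$.

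\textbf{Main obstacle.} The only delicate point is the quasi-everywhere bookkeeping in the potential identity above: it holds off the union of two polar sets (one from the definition of $Bal(\rho,K_1)$, the other from that of $\nu_2$), which is still a polar set, so uniqueness applies. No regularity of the boundary behavior of the balayage, no domination principle, and no limiting arguments are needed, so I do not foresee a substantial obstacle; the entire proof is essentially one invocation of uniqueness of balayage.
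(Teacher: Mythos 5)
Your proof is correct and follows essentially the same route as the paper: both decompose $Bal(\mu,K_2)$ into its restriction to $K_1$ plus the piece on $K_2\setminus K_1$, sweep the latter onto $K_1$, and conclude $Bal(\mu,K_1)=Bal(\mu,K_2)\big|_{K_1}+Bal(\rho,K_1)\ge Bal(\mu,K_2)\big|_{K_1}$. The only cosmetic difference is that you identify the candidate measure $\tau$ with $Bal(\mu,K_1)$ via the uniqueness clause of Proposition \ref{prop_baldef} for positive measures, whereas the paper phrases the same step through the balayage of the signed measure $\mu-\nu_2$; both are valid.
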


\begin{proof}
Denote the positive measure
\begin{equation}
    \nu= Bal(\mu,K_2)\,,
\end{equation}
and write
\begin{equation}
    \nu = \nu\chi_{K_1} + \nu\chi_{K_2\backslash K_1} =: \nu_1+\nu_2\,.
\end{equation}
Then we have
\begin{equation}
    W*\nu_1 + W*\nu_2 = W*\mu,\quad \textnormal{q.e. }K_2\,.
\end{equation}
This implies
\begin{equation}
    W*\nu_1 = W*(\mu-\nu_2),\quad \textnormal{q.e. }K_1\,,
\end{equation}
i.e., 
\begin{equation}
    \nu_1 = Bal(\mu,K_1) - Bal(\nu_2,K_1)\,,
\end{equation}
since $\nu_1$ is supported on $K_1$. Since $\nu_2$ is positive, we see that $Bal(\nu_2,K_1)$ is also positive by Proposition \ref{prop_baldef}. This gives the conclusion.

\end{proof}

Finally we consider the minimizer $\rho$ of $E$ (defined in \eqref{E}) and analyze the continuity property of $V[\rho]$.
\begin{lemma}\label{lem_EU}
    Let $K$ be a finite union of disjoint closed intervals, and $U$ be a continuous function on $K$, and $m>0$. Then the energy $E$ (defined in \eqref{E}) has a unique minimizer $\rho$ in $m\cM(K)$. It is the unique element in $m\cM(K)$ satisfying the Euler-Lagrange condition
    \begin{equation}
        V[\rho]\le C_0,\quad\textnormal{on }\supp\rho;\qquad V[\rho]\ge C_0,\quad\textnormal{q.e. }K\,,
    \end{equation}
    for some $C_0$.
    
    Suppose $(a,b)$ is a maximal interval in $(\supp\rho)^c$, i.e., $(a,b)\subset (\supp\rho)^c$ and $a,b\in\supp\rho$. Then 
    \begin{equation}\label{lem_EU_1}
        \lim_{x\rightarrow a^+}V[\rho](x)=V[\rho](a),\quad \lim_{x\rightarrow b^-}V[\rho](x)=V[\rho](b).
    \end{equation}
    Similarly (3.26) holds for maximal intervals of the form $(-\infty,b)$ or $(a,\infty)$.
\end{lemma}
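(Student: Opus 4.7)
The proof naturally splits into two parts: the existence/uniqueness/Euler--Lagrange characterization, which parallels the proof of Proposition \ref{prop_EL} nearly verbatim, and the one-sided continuity of $V[\rho]$ at the vacuum-interval endpoints, which is the substantive new content of the lemma.

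For the first part, the plan is as follows. The set $m\cM(K)$ is sequentially weak-$*$ compact (since $K$ is compact and the total mass is fixed at $m$), the self-energy $\int(W*\rho)\rd{\rho}$ is LSC by Fatou's lemma applied to the LSC kernel $W$, and $\rho\mapsto\int U\rd{\rho}$ is weak-$*$ continuous since $U$ is bounded and continuous, so a minimizer exists. Uniqueness and sufficiency of the EL conditions follow from strict convexity along linear interpolations $(1-t)\rho_0+t\rho_1$, using $\int(W*\mu)\rd{\mu}>0$ for nontrivial mean-zero $\mu$ (from the Fourier representation and positivity of $\hat{W}$ cited in the proof of Proposition \ref{prop_EL}). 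The EL conditions themselves are derived from the first-variation inequality $\int V[\rho]\rd{(\rho'-\rho)}\geq 0$ valid for all $\rho'\in m\cM(K)$: setting $C_0:=\tfrac{1}{m}\int V[\rho]\rd{\rho}$, this forces $V[\rho]\geq C_0$ q.e.\ on $K$ (otherwise concentrating $\rho'$ on a positive-capacity subset of $\{V[\rho]<C_0\}$ would violate the inequality), and then $V[\rho]\leq C_0$ on $\supp\rho$ follows because $\rho$ must be concentrated on the closed set $\{V[\rho]\leq C_0\}$ (closed by LSC of $V[\rho]$).

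For the continuity claim the plan hinges on two observations. First, any finite-energy minimizer has no atoms, since a point mass $c\delta_{y_0}$ would produce infinite self-energy $c^2 W(0)=+\infty$. Second, the EL inequality $V[\rho]\leq C_0$ is pointwise at $a\in\supp\rho$, which immediately gives $W*\rho(a)\leq C_0-U(a)<\infty$. Given these, for $x$ sufficiently close to $a$ in $(a,b)$, I would exploit $(a,b)\cap\supp\rho=\emptyset$ and $\rho(\{a\})=0$ to split
\[
W*\rho(x)=\int_{y<a}(x-y)^{-s}\rd{\rho(y)}+\int_{y\geq b}(y-x)^{-s}\rd{\rho(y)}.
\]
As $x\downarrow a$, the first integrand increases monotonically to $(a-y)^{-s}$, so monotone convergence delivers $\int_{y<a}(a-y)^{-s}\rd{\rho(y)}$; the second integrand is uniformly bounded by $(\tfrac{b-a}{2})^{-s}$ and converges pointwise to $(y-a)^{-s}$, so dominated convergence delivers $\int_{y\geq b}(y-a)^{-s}\rd{\rho(y)}$. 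Summing and using continuity of $U$ at $a$ produces $\lim_{x\to a^+}V[\rho](x)=W*\rho(a)+U(a)=V[\rho](a)$. The argument for $\lim_{x\to b^-}V[\rho](x)=V[\rho](b)$ is the mirror image, and for the semi-infinite cases $(-\infty,b)$ or $(a,\infty)$ one of the two pieces is simply absent. The only conceptually delicate point is securing the finiteness $W*\rho(a)<\infty$ before invoking MCT/DCT, but this is handed to us by the \emph{pointwise} EL bound at $a\in\supp\rho$, not merely its q.e.\ version; once finiteness and the split are in hand, everything reduces to routine monotone/dominated convergence.
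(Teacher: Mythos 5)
Your proposal is correct, and for the substantive continuity claim it uses essentially the same decomposition as the paper: split $\rho$ at the endpoint into the mass on the near side of $a$ and the mass beyond $b$, exploit monotonicity of the Riesz kernel for the near part and boundedness/continuity for the far part. The differences are worth noting. The paper does not reprove the first part at all — existence, uniqueness and the Euler–Lagrange characterization are cited from \cite[Theorem 2.1]{DOSW} — whereas you sketch the standard direct-method and convexity argument; that sketch is fine, though in passing from ``$V[\rho]\ge C_0$ q.e.'' to ``$\rho$-a.e.'' one should note that the finite-energy minimizer charges no zero-capacity set (a standard step you leave implicit). For \eqref{lem_EU_1}, the paper's argument is a semicontinuity squeeze: $\liminf_{x\to a^+}V[\rho](x)\ge V[\rho](a)$ because $V[\rho]$ is lower semicontinuous, and $\limsup_{x\to a^+}V[\rho](x)\le V[\rho](a)$ because, writing $\rho=\rho\chi_{(-\infty,a]}+\rho\chi_{[b,\infty)}$, the first potential is decreasing on $[a,\infty)$ while the second plus $U$ is continuous near $a$. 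This squeeze makes your two auxiliary observations — that $\rho$ has no atoms and that the pointwise bound $V[\rho](a)\le C_0$ guarantees $W*\rho(a)<\infty$ — unnecessary: the inequality $W*(\rho\chi_{(-\infty,a]})(x)\le W*(\rho\chi_{(-\infty,a]})(a)$ holds for $x>a$ even if the right-hand side is infinite or $\rho$ has an atom at $a$. Your MCT/DCT computation is nevertheless valid (indeed monotone convergence handles a possible atom at $a$ as well, giving equality in the extended reals), so both routes reach the same conclusion; the paper's is just slightly leaner.
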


Here, \eqref{lem_EU_1} is analogous to \cite[Proposition 8.1 (iii)]{SW21}.

\begin{proof}
The existence, uniqueness and Euler-Lagrange condition can be found in \cite[Theorem 2.1]{DOSW}. 

To prove \eqref{lem_EU_1}, we only need to show $\lim_{x\rightarrow a^+}V[\rho](x)=V[\rho](a)$ by symmetry, and the results for $(-\infty,b)$ or $(a,\infty)$ can be proved similarly. First notice that $V[\rho]$ is continuous on $(a,b)$, and 
\begin{equation}
    \liminf_{x\rightarrow a^+}V[\rho](x)\ge V[\rho](a)\,,
\end{equation}
since $V[\rho]$ is lower-semicontinuous. Then we decompose $\rho$ into
\begin{equation}
    \rho = \rho \chi_{(-\infty,a]} + \rho \chi_{[b,\infty)} =: \rho_1 + \rho_2\,,
\end{equation}
and thus
\begin{equation}
    V[\rho] = W*\rho_1 + W*\rho_2 + U\,.
\end{equation}
Notice that $W*\rho_2$ and $U$ are continuous in a neighborhood of $a$, while $W*\rho_1$ is decreasing on $[a,\infty)$ since $W$ is decreasing on $(0,\infty)$ and $\rho_1$ is positive. Therefore we get
\begin{equation}
    \limsup_{x\rightarrow a^+}V[\rho](x)\le V[\rho](a)\,,
\end{equation}
and \eqref{lem_EU_1} is proved.
\end{proof}

\section{Balayage representation of $\mu_\lambda$}\label{sec_mulam}

In this section we study the signed measure $\mu_\lambda$, and in particular, we will prove Lemma \ref{lem_lam1}. We first notice that the uniqueness of $\mu_\lambda$ follows from the positive definite property of $W$, similar to the proof of Proposition \ref{prop_baldef}. 

To prove the existence of $\mu_\lambda$, we start from the explicit construction of $\mu_0$, then express the general $\mu_\lambda$ in terms of $\mu_0$ via the operator $Bal_\lambda$ (defined in \eqref{ballamdef}).

\subsection{The signed measure $\mu_0$}

We consider the signed measure $\mu_0$ as stated in Lemma \ref{lem_lam1}. It is supported on $[-1,1]$ and satisfies the characterizing property that $V[\mu_0]=0$ on $[-1,1]$.
\begin{lemma}\label{lem_mu0}
    $\mu_0$ is given by the explicit formula
    \begin{equation}
        \mu_0(x) = C_{01}(1-x^2)_+^{-\frac{1-s}{2}} - C_{02}(1-x^2)_+^{\frac{1+s}{2}}\,,
    \end{equation}
    where 
    \begin{equation}
        C_{01}=\frac{3}{s(1-s)\Gamma(\frac{1-s}{2})\Gamma(\frac{1+s}{2})},\quad C_{02}=\frac{2}{1+s}C_{01}\,,
    \end{equation}
    are positive constants. $\frac{\mu_0(x)}{\omega_1(x)}=C\mu_0(x)(1-x^2)_+^{\frac{1-s}{2}}$ is increasing on $(0,1)$. $\mu_0(x)$ is negative on $(0,\sqrt{\frac{1-s}{1+s}})$ and positive on $(\sqrt{\frac{1-s}{1+s}},1)$.
\end{lemma}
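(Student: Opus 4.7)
The plan is to verify the claimed formula by direct computation, and then read off the monotonicity and sign from the factored expression. Write $\mu_0 = C_{01}\varphi_1 - C_{02}\varphi_2$ with $\varphi_1(x)=(1-x^2)_+^{-(1-s)/2}$ and $\varphi_2(x)=(1-x^2)_+^{(1+s)/2}$. The first piece is a scalar multiple of the equilibrium measure $\omega_1$, so by Proposition \ref{prop_equi} one has $W*\varphi_1 \equiv \kappa_1$ on $[-1,1]$ for an explicit constant $\kappa_1$ expressible in terms of Gamma factors. The remaining ingredient is the classical Riesz-potential identity for the Jacobi-type weight $\varphi_2$,
\[
  \int_{-1}^{1} |x-y|^{-s}(1-y^2)^{(1+s)/2}\rd y \;=\; \alpha(s) - \beta(s)\,x^2, \qquad |x|\le 1,
\]
with explicit $\alpha(s), \beta(s)>0$. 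I would establish this either by the Mellin/Tricomi transform method or by starting from the known identity $W*\omega_1\equiv$ const and climbing one step up the Jacobi-exponent ladder via an integration-by-parts / reweighting trick. A key structural point is that only the constant and $x^2$ terms can appear on the right-hand side, which is the Riesz-potential analog of the preservation of quadratic polynomials by a suitable operator.

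Once that identity is in hand, substitution yields
\[
  V[\mu_0](x) \;=\; \bigl(C_{01}\kappa_1 - C_{02}\alpha(s)\bigr) + \bigl(C_{02}\beta(s) - \tfrac{3}{1-s}\bigr)x^2, \qquad |x|\le 1,
\]
and requiring $V[\mu_0]\equiv 0$ on $[-1,1]$ produces two equations: the $x^2$-coefficient pins down $C_{02}$, while the constant term forces $C_{02}/C_{01} = \kappa_1/\alpha(s)$. Applying the reflection identity $\Gamma(\tfrac{1-s}{2})\Gamma(\tfrac{1+s}{2}) = \pi/\cos(\pi s/2)$ together with $\Gamma(1+s)=s\Gamma(s)$ to clean up the Gamma-function factors, one recovers the stated closed forms for $C_{01}$ and $C_{02}$, with in particular the ratio $C_{02}/C_{01}=2/(1+s)$.

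For the monotonicity and sign assertions, factor $\omega_1$ out of $\mu_0$:
\[
  \frac{\mu_0(x)}{\omega_1(x)} \;=\; c\bigl[C_{01} - C_{02}(1-x^2)\bigr] \;=\; c\bigl[(C_{01}-C_{02}) + C_{02}\,x^2\bigr]
\]
for some $c>0$. Since $C_{02}>0$, this is a strictly increasing function of $x^2$, and hence strictly increasing on $(0,1)$. At $x=0$ the bracket equals $C_{01}-C_{02}<0$ (because $C_{02}/C_{01}=2/(1+s)>1$ when $s<1$), while as $x\to 1^-$ the first term of $\mu_0$ blows up, so $\mu_0(x)\to+\infty$. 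Strict monotonicity of $\mu_0/\omega_1$ then gives a unique zero in $(0,1)$, determined by $C_{01}=C_{02}(1-x^2)$, and $\mu_0$ is negative to the left and positive to the right of that zero.

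The main obstacle is the Riesz-potential identity for $\varphi_2$: both its derivation and the subsequent Gamma-function bookkeeping are error-prone, and verifying that only $x^0$ and $x^2$ coefficients survive on the right-hand side requires care. Once the identity with its explicit constants is secured, the rest of the proof is coefficient matching and elementary algebra.
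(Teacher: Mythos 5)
Your proposal is correct and takes essentially the same route as the paper: it superposes the constant-potential identity for the equilibrium density $(1-x^2)_+^{-(1-s)/2}$ with the quadratic-potential identity $W*(1-x^2)_+^{(1+s)/2}=C_2-C_3x^2$ on $[-1,1]$, matches the constant and $x^2$ coefficients to force $V[\mu_0]=0$, and then factors out $\omega_1$ to read off the monotonicity and single sign change. The only difference is how the quadratic identity is sourced: the paper imports it from \cite{CV11} and pins down $C_2,C_3$ by evaluating the potential at $x=0$ and $x=1$ as Beta functions, whereas you propose to re-derive it by a Mellin/Tricomi or ladder argument, which is the same key ingredient at essentially the same level of detail.
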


A more general explicit construction, for $U(\bx)=|\bx|^{2k},\,k\in \mathbb{Z}_{>0}$ in multi-dimensions, will be given in the upcoming work \cite{CHSS}.

\begin{proof}
We recall that the equilibrium measure of $W$ on $[-1,1]$ is a constant multiple of
\begin{equation}
    u_1(x) = (1-x^2)_+^{-\frac{1-s}{2}}\,.
\end{equation}
Thus we have the Euler-Lagrange condition
\begin{equation}\label{equ1}
    (W*u_1)(x) = C_1,\quad \textnormal{on }[-1,1]\,,
\end{equation}
where 
\begin{equation}\begin{split}
    C_1 = & (W*u_1)(1) 
    =  \int_{-1}^1 (1-y)^{-s}(1-y^2)^{-\frac{1-s}{2}}\rd{y} 
    =  \int_{-1}^1 (1-y)^{-\frac{1+s}{2}}(1+y)^{-\frac{1-s}{2}}\rd{y} \\
    = & \int_0^1 (1-z)^{-\frac{1+s}{2}}z^{-\frac{1-s}{2}}\rd{z} 
    =  B(\frac{1-s}{2},\frac{1+s}{2})\,.
\end{split}\end{equation}
Here in the fourth equality we used the change of variable $y=2z-1$, and $B(\cdot,\cdot)$ denotes the Beta function.

We also recall that the minimizer for the interaction potential $W$ in the external field $x^2$ is a rescaling of
\begin{equation}
    u_2 = (1-x^2)_+^{\frac{1+s}{2}}\,,
\end{equation}
(c.f. \cite{CV11}). Thus we have the Euler-Lagrange condition
\begin{equation}\label{equ2}
    (W*u_2)(x) = C_2-C_3 x^2,\quad \textnormal{on }[-1,1]\,,
\end{equation}
for some constants $C_2$ and $C_3$. To find them, we use 
\begin{equation}\begin{split}
    C_2 = & (W*u_2)(0) 
    =  \int_{-1}^1 |y|^{-s}(1-y^2)^{\frac{1+s}{2}}\rd{y} 
    =  2\int_0^1 y^{-s}(1-y^2)^{\frac{1+s}{2}}\rd{y} \\
    = & \int_0^1 z^{-\frac{1+s}{2}}(1-z)^{\frac{1+s}{2}}\rd{y} 
    =  B(\frac{1-s}{2},\frac{3+s}{2})\,,
\end{split}\end{equation}
and
\begin{equation}\begin{split}
    C_2-C_3 = & (W*u_2)(1) 
    =  \int_{-1}^1 (1-y)^{-s}(1-y^2)^{\frac{1+s}{2}}\rd{y} 
    =  \int_{-1}^1 (1-y)^{\frac{1-s}{2}}(1+y)^{\frac{1+s}{2}}\rd{y} \\
    = & 4\int_0^1 (1-z)^{\frac{1-s}{2}}y^{\frac{1+s}{2}}\rd{z} 
    =  4B(\frac{3-s}{2},\frac{3+s}{2})\,,
\end{split}\end{equation}
i.e.,
\begin{equation}
    C_3 = B(\frac{1-s}{2},\frac{3+s}{2}) - 4B(\frac{3-s}{2},\frac{3+s}{2})\,.
\end{equation}

Recall that $0 = V[\mu_0](x) = (W*\mu_0)(x) + U(x) = (W*\mu_0)(x) - \frac{3}{1-s}x^2$ on $[-1,1]$. Doing a linear combination of \eqref{equ1} and \eqref{equ2}, we get 
\begin{equation}
    \mu_0 (x) = \frac{3C_2}{(1-s)C_1C_3}(1-x^2)_+^{-\frac{1-s}{2}}-\frac{3}{(1-s)C_3}(1-x^2)_+^{\frac{1+s}{2}} \,.
\end{equation}
Expressing the constants by gamma functions, we get the desired expression for $\mu_0$. The function $\mu_0(x)(1-x^2)_+^{\frac{1-s}{2}}=C_{01}(1-\frac{2}{1+s}(1-x^2))$ is clearly increasing on $(0,1)$. This also shows that $\mu_0(x)$ is negative on $(0,\sqrt{\frac{1-s}{1+s}})$ and positive on $(\sqrt{\frac{1-s}{1+s}},1)$.

\end{proof}

\subsection{Relation between different $\mu_\lambda$ via balayage}

The existence part of Lemma \ref{lem_lam1} is given by the following lemma.

\begin{lemma}\label{lem_lam12}
    For any $\lambda\in (0,1)$, the formula 
    \begin{equation}\label{lem_lam12_2}
        \mu_\lambda = Bal_{\lambda}[\mu_0] = \mu_0\chi_{K_{\lambda,1}} + Bal_{\lambda}[\mu_0\chi_{(-\lambda,\lambda)}]\,,
    \end{equation}
    (where $\mu_0$ is given by Lemma \ref{lem_mu0}) satisfies \eqref{lem_lam1_1}, and thus $\mu_\lambda$ in Lemma \ref{lem_lam1} exists.
    
    Let $0\le \lambda_1 < \lambda_2 < 1$. Then
    \begin{equation}\label{lem_lam12_1}
        \mu_{\lambda_2} = Bal_{\lambda_2}[\mu_{\lambda_1}] = \mu_{\lambda_1}\chi_{K_{\lambda_2,1}} + Bal_{\lambda_2}[\mu_{\lambda_1}\chi_{(-\lambda_2,\lambda_2)}]\,.
    \end{equation}
\end{lemma}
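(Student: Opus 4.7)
The plan is to exploit the defining property of the signed balayage together with the uniqueness of $\mu_\lambda$ (already established at the start of this section by the positive-definiteness of $W$). For the first formula, set $\nu_\lambda := Bal_\lambda[\mu_0]$, where $Bal_\lambda$ is extended to signed measures via the Jordan decomposition as explained after Proposition \ref{prop_baldef}. Applying Proposition \ref{prop_baldef} to $\mu_0^+$ and $\mu_0^-$ separately gives $W*\nu_\lambda = W*\mu_0$ quasi-everywhere on $K_{\lambda,1}$. Since $K_{\lambda,1} \subset [-1,1]$ and $V[\mu_0] = 0$ on $[-1,1]$ by the characterizing property of $\mu_0$ stated before Lemma \ref{lem_mu0}, this yields $V[\nu_\lambda] = 0$ q.e. on $K_{\lambda,1}$. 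Once this is upgraded to a pointwise identity (see below), the uniqueness of $\mu_\lambda$ forces $\nu_\lambda = \mu_\lambda$, proving both the existence asserted in Lemma \ref{lem_lam1} and the first equality in \eqref{lem_lam12_2}.

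The second equality in \eqref{lem_lam12_2} follows from the linearity of the signed balayage together with the observation that $Bal_\lambda[\sigma] = \sigma$ whenever $\sigma$ is a signed measure already supported on $K_{\lambda,1}$ --- immediate from the uniqueness clause of Proposition \ref{prop_baldef} applied to $\sigma^\pm$. Splitting $\mu_0 = \mu_0\chi_{K_{\lambda,1}} + \mu_0\chi_{(-\lambda,\lambda)}$ and applying $Bal_\lambda$ then produces the stated decomposition. The second half of the lemma, \eqref{lem_lam12_1}, is proved by the same strategy with $\mu_{\lambda_1}$ in the role of $\mu_0$: since $\lambda_1 < \lambda_2$ implies $K_{\lambda_2,1}\subset K_{\lambda_1,1}$ and $V[\mu_{\lambda_1}] = 0$ on $K_{\lambda_1,1}$, Proposition \ref{prop_baldef} yields $V[Bal_{\lambda_2}[\mu_{\lambda_1}]] = 0$ q.e. on $K_{\lambda_2,1}$, whence uniqueness gives $\mu_{\lambda_2} = Bal_{\lambda_2}[\mu_{\lambda_1}]$; the analogous decomposition is obtained exactly as before.

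The main obstacle is upgrading the q.e. equalities to genuine pointwise equalities on $K_{\lambda,1}$, both to match the statement of \eqref{lem_lam1_1} and to secure the continuity of $V[\mu_\lambda]$ on all of $\mathbb{R}$ asserted in Lemma \ref{lem_lam1}. First, $W*\mu_0$ is continuous on $\mathbb{R}$: the explicit formula in Lemma \ref{lem_mu0} shows that $\mu_0$ has the same $(1-x^2)^{-(1-s)/2}$ boundary behavior near $\pm 1$ as the equilibrium measure $\omega_1$, so the continuity assertion of Proposition \ref{prop_equi} applies. For the continuity of $W*\mu_\lambda$, I would use the split $\mu_\lambda = \mu_0\chi_{K_{\lambda,1}} + Bal_\lambda[\mu_0\chi_{(-\lambda,\lambda)}]$: the first piece is bounded in the interior of $K_{\lambda,1}$ with the same singularity at $\pm 1$ as $\mu_0$, giving a continuous potential; the second piece --- the signed balayage onto $K_{\lambda,1}$ of a measure compactly supported in $(-\lambda,\lambda)$ --- has a continuous potential on $\mathbb{R}$ by integrating the continuity statement for $W*Bal(\delta_y,K_{\lambda,1})$ in Proposition \ref{prop_baldef} against $\mu_0^\pm\chi_{(-\lambda,\lambda)}$. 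Continuity on both sides then promotes the q.e. identity to the required pointwise identity on $K_{\lambda,1}$.
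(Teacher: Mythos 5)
Your overall route is the same as the paper's: define the candidate as $Bal_\lambda[\mu_0]$, observe that by the defining property of the balayage $V[\cdot]=0$ holds q.e.\ on $K_{\lambda,1}$, upgrade the q.e.\ identity to a pointwise one via continuity of the potential, and invoke the uniqueness established at the start of Section 4; the splitting into $\mu_0\chi_{K_{\lambda,1}}+Bal_\lambda[\mu_0\chi_{(-\lambda,\lambda)}]$ and the treatment of \eqref{lem_lam12_1} are likewise identical in spirit. The only real divergence is how continuity of $W*\mu_\lambda$ on $\mathbb{R}$ is obtained: the paper gets it from the regularity result Lemma \ref{lem_mucont} (namely $\mu_\lambda/\omega_{\lambda,1}\in C^1[\lambda,1]$, whence the density is $O\big((x-\lambda)^{-\frac{1-s}{2}}\big)$ at the inner endpoints and the potential is continuous since $s+\tfrac{1-s}{2}<1$), whereas you try to get it directly from the decomposition.

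That direct step is where your argument has a genuine gap. For the piece $Bal_{\lambda}[\mu_0\chi_{(-\lambda,\lambda)}]$ you assert continuity of its potential ``by integrating the continuity statement for $W*Bal(\delta_y,K_{\lambda,1})$ against $\mu_0^\pm\chi_{(-\lambda,\lambda)}$.'' Pointwise-in-$y$ continuity of $x\mapsto (W*Bal(\delta_y,K_{\lambda,1}))(x)$ does not pass to the superposition without uniform control, and the problematic region is exactly the delicate one: near $x=\pm\lambda$ the balayage densities blow up like $(x-\lambda)^{-\frac{1-s}{2}}$, and as $y\to\pm\lambda$ the potentials $W*Bal(\delta_y,K_{\lambda,1})$ are not uniformly bounded there, so neither plain dominated convergence nor an equicontinuity argument applies as stated. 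This is precisely the difficulty the paper isolates in Lemma \ref{lem_mucont} (via the auxiliary function $Bal_{\lambda/2}[\delta_0]$) and, in integrated form, in the estimate \eqref{alpest1}. Your approach can be repaired, e.g.\ by using the explicit kernel \eqref{prop_baldef_2} to bound the balayage density of $|\mu_0|\chi_{(-\lambda,\lambda)}$ near $\pm\lambda$ by a constant times $(x-\lambda)^{-\frac{1-s}{2}}$ (the inner $y$-integral is bounded because $(\lambda-y)^{\frac{1-s}{2}}(x-y)^{-1}\lesssim(\lambda-y)^{-\frac{1+s}{2}}$ is integrable), or by invoking the classical domination $W*Bal(\nu,K)\le W*\nu$ on $\mathbb{R}$ together with a generalized dominated convergence argument based on the continuity of $W*\big(|\mu_0|\chi_{(-\lambda,\lambda)}\big)$; but some such uniform ingredient must be supplied, since the one-line interchange of limit and integral is not a valid inference on its own. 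The remaining parts of your proposal (linearity of the signed balayage, $Bal_\lambda[\sigma]=\sigma$ for $\sigma$ supported on $K_{\lambda,1}$, and the reduction of \eqref{lem_lam12_1} to the same argument with $\mu_{\lambda_1}$ in place of $\mu_0$) are fine and match the paper.
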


\begin{proof}

Let $\mu_\lambda$ be given by \eqref{lem_lam12_2}, which is supported on $K_{\lambda,1}$. By the definition of $Bal_{\lambda}$, we have 
\begin{equation}
    V[\mu_\lambda] = V[\mu_0] = 0,\quad \textnormal{q.e. }K_{\lambda,1}\,.
\end{equation}
This equation actually holds everywhere on $K_{\lambda,1}$ due to the regularity result below, Lemma \ref{lem_mucont}, which implies that $W*\mu_\lambda$ is continuous on $\mathbb{R}$. Therefore $\mu_\lambda$ satisfies \eqref{lem_lam1_1}.

The relation \eqref{lem_lam12_1} can be derived by a similar argument.

\end{proof}

\begin{lemma}\label{lem_mucont}
    For every $\lambda\in(0,1)$, the $\mu_\lambda$ given by \eqref{lem_lam12_2} satisfies that  $\frac{\mu_\lambda}{\omega_{\lambda,1}}$ is $C^1$ on $[\lambda,1]$.
\end{lemma}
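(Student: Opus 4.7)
\emph{Plan.} My plan is to reduce the defining equation \eqref{lem_lam1_1} for $\mu_\lambda$ on the two-interval set $K_{\lambda,1}$ to a single-interval Riesz potential equation on $[\lambda,1]$ whose right-hand side is real analytic, then invoke the inversion theory for such equations to conclude the $C^1$ regularity of $\mu_\lambda/\omega_{\lambda,1}$. The outer endpoint $x=1$ is not delicate: both $\mu_0$ (by Lemma \ref{lem_mu0}) and the balayage piece (by Corollary \ref{cor_eqpos}) naturally carry the $(1-x)^{-(1-s)/2}$ singularity, so division by $\omega_{\lambda,1}$ gives a real analytic function there. The delicate issue is entirely at the inner endpoint $x=\lambda$.

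\emph{Reduction and inversion.} Since $\mu_0$ is even (Lemma \ref{lem_mu0}) and $K_{\lambda,1}$ is symmetric under $x\mapsto -x$, uniqueness in Lemma \ref{lem_lam1} forces $\mu_\lambda$ itself to be even. Splitting the convolution on $K_{\lambda,1}$ into its two pieces and using evenness, the identity $V[\mu_\lambda](x)=0$ reads, for $x\in[\lambda,1]$,
\[
\int_\lambda^1 |x-y|^{-s}\,\rd\mu_\lambda(y) = g(x), \qquad g(x) := -U(x)-\int_\lambda^1 |x+y|^{-s}\,\rd\mu_\lambda(y).
\]
Because $x+y\ge 2\lambda>0$ uniformly for $x$ in a complex neighborhood of $[\lambda,1]$ disjoint from $[-1,-\lambda]$, the second integral is real analytic in $x$ there, so together with the polynomial $U$ the function $g$ is real analytic near $[\lambda,1]$. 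The restriction $\mu_\lambda\big|_{[\lambda,1]}$ is then the unique signed measure on $[\lambda,1]$ solving $W*\nu=g$ (uniqueness from strict positive-definiteness of $W$); the plan is to apply a Tricomi/Abel-type inversion formula for this single-interval Riesz equation, representing the solution as $\nu(x)=\omega_{\lambda,1}(x)\,h(x)$ with $h$ obtained from $g$ by an integro-differential operator that loses at most one derivative. Hence $g$ real analytic yields $h\in C^1([\lambda,1])$, which is the lemma.

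\emph{Main obstacle and alternative.} The crux is justifying the inversion step, as the Riesz-inversion theorem I invoke is classical but not explicitly recorded in the excerpt. I would supply it either by differentiating once (in the fractional-calculus sense) to reduce to a classical Abel integral equation whose inversion is standard, or, more self-containedly, by working directly with the balayage representation $\mu_\lambda=\mu_0\chi_{K_{\lambda,1}}+Bal_\lambda[\mu_0\chi_{(-\lambda,\lambda)}]$ from Lemma \ref{lem_lam12}. A local expansion as $x\to\lambda^+$ shows that each of the two summands individually produces a leading non-smooth term of order $(x-\lambda)^{(1-s)/2}$ in $\mu_\lambda/\omega_{\lambda,1}$: the first from $\mu_0(\lambda)\ne 0$ paired against the endpoint singularity of $\omega_{\lambda,1}$, and the second from integrating the kernel $((1-y)(\lambda-y))^{(1-s)/2}(x-y)^{-1}$ coming from \eqref{prop_baldef_2} against $\mu_0(y)$ for $y$ near $\lambda$. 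The required $C^1$ regularity then reduces to checking that these two non-smooth leading pieces cancel—a cancellation forced by $V[\mu_\lambda]=0$ on $K_{\lambda,1}$ and verifiable by a direct local computation.
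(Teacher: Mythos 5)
Your reduction to a single-interval equation on $[\lambda,1]$ with real analytic right-hand side $g$ is fine (evenness of $\mu_\lambda$ and analyticity of $x\mapsto\int|x+y|^{-s}\rd\mu_\lambda(y)$ near $[\lambda,1]$ both hold), but the step that carries the entire content of the lemma is then delegated to an unproven ``Tricomi/Abel-type inversion'' claim, and the form in which you state it does not actually yield the conclusion. The difficulty is not loss of derivatives in the interior (there $\mu_\lambda/\omega_{\lambda,1}$ is real analytic anyway, by Proposition \ref{prop_K} and Lemma \ref{lem_eqpos}); it is the boundary behavior of the inverse operator at the inner endpoint $x=\lambda$ after dividing by $\omega_{\lambda,1}$. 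An inversion formula that ``loses at most one derivative'' applied to analytic $g$ would say nothing about whether the output, written as $\omega_{\lambda,1}\cdot h$, has $h\in C^1$ up to $x=\lambda$: generically such inversions produce endpoint terms like $(x-\lambda)^{(1-s)/2}$, which is exactly the obstruction here. So as written, the main route assumes precisely what has to be proved, and no precise classical statement is cited or established.

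Your fallback route does identify the correct mechanism: each of the two summands in \eqref{lem_lam12_2}, divided by $\omega_{\lambda,1}$, carries a leading $(x-\lambda)^{(1-s)/2}$ term (exponent in $(0,\tfrac12)$, hence not $C^1$), and these must cancel. Two things are missing to make this a proof. First, $Bal_\lambda$ is balayage onto the two-interval set $K_{\lambda,1}$, for which \eqref{prop_baldef_2} does not apply directly; you must first pass to $Bal(\cdot,[\lambda,1])$ via $Bal(\mu_0\chi_{(-\lambda,\lambda)},[\lambda,1])=Bal(\nu\chi_{[-1,-\lambda]},[\lambda,1])+\nu\chi_{[\lambda,1]}$ with $\nu=Bal_\lambda[\mu_0\chi_{(-\lambda,\lambda)}]$, noting the first term is harmless near $\lambda$ — this is a step the paper carries out and your sketch skips. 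Second, ``verifiable by a direct local computation'' is left undone: one must actually extract the coefficient of $(x-\lambda)^{(1-s)/2}$ from the kernel integral (it does cancel against $\mu_0(\lambda)$ times the endpoint factor of $1/\omega_{\lambda,1}$), and then control the remainder in $C^1$, which requires a two-term expansion with uniform error bounds, not just identification of the leading term. The paper avoids all coefficient computations by a different device: it subtracts $c_0\,Bal_{\lambda/2}[\delta_0]$ with $c_0=\mu_0(\lambda)/Bal_{\lambda/2}[\delta_0](\lambda)$, for which the analogous split is known to be $\omega_{\lambda,1}$-analytic by Corollary \ref{cor_eqpos}, so the remaining density $v$ vanishes at $\lambda$ and the balayage integral can simply be differentiated under the integral sign. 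Your outline could be completed along your second route, but as it stands the decisive cancellation and the $C^1$ control of the remainder are asserted rather than proved.
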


\begin{proof}

On the interval $(\lambda,1]$, since $\frac{\mu_0}{\omega_1}$ is a polynomial (by its explicit formula in Lemma \ref{lem_mu0}) and $\frac{\omega_1}{\omega_{\lambda,1}}$ is real analytic, we see that $\frac{\mu_0\chi_{K_{\lambda,1}}}{\omega_{\lambda,1}}$ is real analytic. Also, by Proposition \ref{prop_K} and Lemma \ref{lem_eqpos}, $\frac{Bal_{\lambda}[\mu_0\chi_{(-\lambda,\lambda)}]}{\omega_{\lambda,1}}$ is real analytic on $(\lambda,1]$. Therefore we only need to focus on the endpoint $\lambda$, i.e., show the $C^1$ property of $\frac{\mu_\lambda}{\omega_{\lambda,1}}$ in $[\lambda,\lambda+\epsilon]$ for small $\epsilon>0$.

We first design an auxiliary function
\begin{equation}
    u(x) = Bal_{\lambda/2}[\delta_0]\,.
\end{equation}
Corollary \ref{cor_eqpos} shows that $u$ is strictly positive in its support $K_{\lambda/2,1}$, and real analytic in the interior of $K_{\lambda/2,1}$.

We decompose
\begin{equation}
    u = u \chi_{(-\lambda,\lambda)} + u \chi_{K_{\lambda,1}} =: u_1 + u_2\,.
\end{equation}
Then we have
\begin{equation}
    Bal_\lambda[\delta_0] = Bal_\lambda[u] = Bal_\lambda[u_1] + u_2\,.
\end{equation}
Corollary \ref{cor_eqpos} shows that $\frac{Bal_\lambda[\delta_0]}{\omega_{\lambda,1}}$ is real analytic on $[\lambda,1]$, i.e., 
\begin{equation}
    \frac{Bal_\lambda[u_1](x) + u_2(x)}{\omega_{\lambda,1}}\,,
\end{equation}
is real analytic on $[\lambda,1]$. 

Then we start from \eqref{lem_lam12_2}, and rewrite it as
\begin{equation}
    \mu_\lambda = (\mu_0 - c_0 u)\chi_{K_{\lambda,1}} + Bal_{\lambda}[(\mu_0 - c_0 u)\chi_{(-\lambda,\lambda)}] + c_0 \big( u_2(x) + Bal_\lambda[u_1](x)\big)\,,
\end{equation}
where
\begin{equation}
    c_0 = \frac{\mu_0(\lambda)}{u(\lambda)}\,, 
\end{equation}
is chosen so that 
\begin{equation}
    v := \mu_0 - c_0 u\,,
\end{equation}
is smooth in a neighborhood of $\lambda$ and satisfies $v(\lambda)=0$. As a consequence, $\frac{v}{\omega_{\lambda,1}}$ is $C^1$ in a neighborhood of $\lambda$. Then it suffices to prove that $\frac{Bal_{\lambda}[v\chi_{(-\lambda,\lambda)}]}{\omega_{\lambda,1}}$ is $C^1$ on $[\lambda,\lambda+\epsilon]$.

Denote $\nu=Bal_{\lambda}[v\chi_{(-\lambda,\lambda)}]$. Since
\begin{equation}
    Bal(v\chi_{(-\lambda,\lambda)},[\lambda,1]) = Bal(\nu,[\lambda,1]) = Bal(\nu \chi_{[-1,-\lambda]},[\lambda,1]) + \nu \chi_{[\lambda,1]}\,,
\end{equation}
and $\frac{Bal(\nu \chi_{[-1,-\lambda]},[\lambda,1])}{\omega_{\lambda,1}}$ is clearly smooth on $[\lambda,1]$ by \eqref{prop_baldef_2}, we see that it suffices to prove the $C^1$ property of $\frac{Bal(v\chi_{(-\lambda,\lambda)},[\lambda,1])}{\omega_{\lambda,1}}$ on $[\lambda,\lambda+\epsilon]$. To do this, we use \eqref{prop_baldef_2} to write
\begin{equation}
    \frac{Bal(v\chi_{(-\lambda,\lambda)},[\lambda,1])}{\omega_{\lambda,1}} = C\int_{-\lambda}^\lambda \big((1-y)(\lambda-y)\big)^{\frac{1-s}{2}}  (x-y)^{-1} v(y)\rd{y}\,,
\end{equation}
for $x\in [\lambda,\lambda+\epsilon]$. We may differentiate with respect to $x$ on the RHS to get
\begin{equation}
    \frac{\rd}{\rd x}\Big(\frac{Bal(v\chi_{(-\lambda,\lambda)},[\lambda,1])}{\omega_{\lambda,1}}\Big) = C\int_{-\lambda}^\lambda \big((1-y)(\lambda-y)\big)^{\frac{1-s}{2}} \big(-(x-y)^{-2}v(y)\big) \rd{y}\,.
\end{equation}
Using that $v$ is smooth near $\lambda$ and $v(\lambda)=0$, the above calculation can be justified because the last integrand is bounded by a constant multiple of $(\lambda-y)^{\frac{1-s}{2}-2+1}$ near $\lambda$, making it locally integrable and uniformly bounded in $x$. It follows that the last integral is continuous on $[\lambda,\lambda+\epsilon]$, which gives the conclusion.

\end{proof}

\section{Continuity and limiting properties}\label{sec_mulam2}

In this section we give some continuity and limiting properties for the family $\mu_\lambda$ with respect to $\lambda$.

\begin{lemma}\label{lem_mulamcont}
    Fix $\lambda_0\in (0,1)$. For any $\alpha>0$, $\mu_\lambda$ converges to $\mu_{\lambda_0}$ as $\lambda\rightarrow \lambda_0^-$ uniformly on $[\lambda_0+\alpha,1-\alpha]$.
\end{lemma}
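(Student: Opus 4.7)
The plan is to use the balayage relation of Lemma \ref{lem_lam12} to identify $\mu_{\lambda_0} - \mu_\lambda$ on $[\lambda_0+\alpha, 1-\alpha]$ with the $Bal_{\lambda_0}$-balayage of a signed measure supported on a shrinking set, and then bound this balayage by a uniform vanishing quantity via the explicit kernel formulas. For $\lambda \in (0, \lambda_0)$, Lemma \ref{lem_lam12} applied with $\lambda_1 = \lambda$, $\lambda_2 = \lambda_0$ gives
\[
\mu_{\lambda_0} = \mu_\lambda \chi_{K_{\lambda_0,1}} + Bal_{\lambda_0}[\mu_\lambda \chi_{A_\lambda}], \qquad A_\lambda := [-\lambda_0, -\lambda] \cup [\lambda, \lambda_0].
\]
On $[\lambda_0+\alpha, 1-\alpha] \subset K_{\lambda_0,1}$, the first term equals $\mu_\lambda(x)$, so $\mu_{\lambda_0}(x) - \mu_\lambda(x) = Bal_{\lambda_0}[\mu_\lambda \chi_{A_\lambda}](x)$.

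Splitting $\mu_\lambda$ into positive and negative parts and controlling each balayage by its kernel yields
\[
|\mu_{\lambda_0}(x) - \mu_\lambda(x)| \le \Big(\sup_{y \in A_\lambda,\, x' \in [\lambda_0+\alpha, 1-\alpha]} Bal(\delta_y, K_{\lambda_0,1})(x')\Big) \cdot \|\mu_\lambda\|_{TV}.
\]
The total variation is uniformly bounded: since $\mu_\lambda = Bal_\lambda[\mu_0]$ by Lemma \ref{lem_lam12}, applying the mass-contracting property from Proposition \ref{prop_baldef} to the positive and negative parts of $\mu_0$ gives $\|\mu_\lambda\|_{TV} \le \|\mu_0\|_{TV}$. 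To bound the kernel, take $y \in [\lambda, \lambda_0]$ and $x \in [\lambda_0+\alpha, 1-\alpha]$. Lemma \ref{lem_bup} applied with $K_1 = [\lambda_0, 1] \subset K_2 = K_{\lambda_0,1}$ gives $Bal(\delta_y, K_{\lambda_0,1})(x) \le Bal(\delta_y, [\lambda_0, 1])(x)$, and the explicit single-interval formula \eqref{prop_baldef_2} provides
\[
Bal(\delta_y, [\lambda_0, 1])(x) = \frac{\cos\frac{\pi s}{2}}{\pi}\big((1-y)(\lambda_0-y)\big)^{\frac{1-s}{2}}\big((1-x)(x-\lambda_0)\big)^{-\frac{1-s}{2}}(x-y)^{-1}.
\]
The estimate $\lambda_0 - y \le \lambda_0 - \lambda$, together with uniform bounds on the remaining factors depending only on $\alpha$ and $\lambda_0$, yields $Bal(\delta_y, K_{\lambda_0,1})(x) \le C_\alpha (\lambda_0-\lambda)^{(1-s)/2}$.

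For $y \in [-\lambda_0, -\lambda]$, I would exploit the reflection symmetry of $K_{\lambda_0,1}$: since $W$ is even, $Bal(\delta_y, K_{\lambda_0,1})(x) = Bal(\delta_{-y}, K_{\lambda_0,1})(-x)$ with $-y \in [\lambda, \lambda_0]$ and $-x \in [-1+\alpha, -\lambda_0-\alpha]$, and I would repeat the argument with $K_1 = [-1, -\lambda_0]$ to obtain the identical bound. Combining, $|\mu_{\lambda_0}(x) - \mu_\lambda(x)| \le C_\alpha (\lambda_0-\lambda)^{(1-s)/2}\|\mu_0\|_{TV}$ uniformly on $[\lambda_0+\alpha, 1-\alpha]$, which tends to $0$ as $\lambda \to \lambda_0^-$. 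The main subtlety I anticipate is the kernel estimate when $y$ and $x$ lie on opposite intervals of $K_{\lambda_0,1}$; a direct attack via the Kelvin transform of Proposition \ref{prop_K} is feasible but messy, whereas the reflection-symmetry shortcut reduces this case to the same-side estimate already handled cleanly by Lemma \ref{lem_bup} and \eqref{prop_baldef_2}.
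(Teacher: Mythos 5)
Your reduction to bounding $Bal_{\lambda_0}[\mu_\lambda\chi_{A_\lambda}]$ on $[\lambda_0+\alpha,1-\alpha]$, the uniform bound $\|\mu_\lambda\|_{TV}\le\|\mu_0\|_{TV}$ via Proposition \ref{prop_baldef}, and the same-side kernel estimate (for $y\in[\lambda,\lambda_0]$, comparing with $Bal(\delta_y,[\lambda_0,1])$ via Lemma \ref{lem_bup} and \eqref{prop_baldef_2} to get the factor $(\lambda_0-y)^{\frac{1-s}{2}}\le(\lambda_0-\lambda)^{\frac{1-s}{2}}$) are all correct. The gap is in the cross case $y\in[-\lambda_0,-\lambda]$. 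Your reflection maps this to a mass point $-y\in[\lambda,\lambda_0]$ near the \emph{inner endpoint of the right interval} with evaluation point $-x$ in the \emph{left} interval; it does not turn the cross case into the same-side case. If you then apply Lemma \ref{lem_bup} with $K_1=[-1,-\lambda_0]$ (the component containing the evaluation point, as the lemma requires), the prefactor in \eqref{prop_baldef_2} becomes $\big((\lambda_0+(-y))(1+(-y))\big)^{\frac{1-s}{2}}\approx\big(2\lambda_0(1+\lambda_0)\big)^{\frac{1-s}{2}}$, which is of order one, not $O\big((\lambda_0-\lambda)^{\frac{1-s}{2}}\big)$. The smallness of $Bal(\delta_y,K_{\lambda_0,1})$ on the far component as $y$ approaches the near component comes precisely from the near component absorbing almost all of the swept mass, and this information is discarded when you compare with balayage onto the far interval alone; so the claimed ``identical bound'' does not follow from the proposed route, and your final estimate only controls the same-side contribution.

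The cross-kernel decay you want is very likely true, but you would need a genuine two-interval estimate (e.g., quantitative control via Proposition \ref{prop_K} and the behavior of the equilibrium density of the Kelvin image, which is not provided by Lemma \ref{lem_bup} plus \eqref{prop_baldef_2}). The paper avoids this issue by flipping which factor carries the smallness: it first shows $\int_{-\lambda_0}^{\lambda_0}|\mu_\lambda|\rd{x}\lesssim(\lambda_0-\lambda)^{\frac{1+s}{2}}$, using $\mu_\lambda=\mu_0\chi_{K_{\lambda,1}}+Bal_\lambda[\mu_0\chi_{(-\lambda,\lambda)}]$ and the integrable edge singularity $(x-\lambda)^{-\frac{1-s}{2}}$, and then only needs the kernel $Bal(\delta_y,[\lambda_0,1])(x)$ to be \emph{bounded} uniformly for $y\in(-\lambda_0,\lambda_0)$ and $x\in[\lambda_0+\alpha,1-\alpha]$, which covers both the same-side and cross contributions at once. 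Your argument becomes complete if you add such a small-mass estimate for $|\mu_\lambda|$ on $[-\lambda_0,-\lambda]$ and pair it with your (bounded) kernel estimate there, keeping your small-kernel argument for $y\in[\lambda,\lambda_0]$ if you wish.
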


\begin{proof}
Take any $\lambda\in (0,\lambda_0)$ and denote $\epsilon=\lambda_0-\lambda$. Then \eqref{lem_lam12_1} gives
\begin{equation}\label{lam0cont1}
    \mu_{\lambda_0} = \mu_{\lambda}\chi_{K_{\lambda_0,1}} + Bal_{\lambda_0}[\mu_{\lambda}\chi_{(-\lambda_0,\lambda_0)}]\,,
\end{equation}
and \eqref{lem_lam12_2} gives
\begin{equation}\label{lam0cont2}
    \mu_{\lambda} = \mu_{0}\chi_{K_{\lambda,1}} + Bal_{\lambda}[\mu_{0}\chi_{(-\lambda,\lambda)}]\,.
\end{equation}
First, \eqref{lam0cont2} with Lemma \ref{lem_bup} and \eqref{prop_baldef_2} shows that
\begin{equation}\label{alpest1}\begin{split}
    \int |\mu_{\lambda}|\chi_{(-\lambda_0,\lambda_0)}\rd{x} \le & 2\int_{\lambda}^{\lambda_0} |\mu_0(x)|\rd{x}  + 2\int_{\lambda}^{\lambda_0}Bal_{\lambda}[|\mu_{0}|\chi_{(-\lambda,\lambda)}](x)\rd{x} \\
    \lesssim & \epsilon  + \int_{\lambda}^{\lambda_0}Bal(\chi_{(-\lambda,\lambda)},[\lambda,1])(x)\rd{x} \\
    \lesssim & \epsilon  + \int_{\lambda}^{\lambda_0}\int_{-\lambda}^\lambda \big((1-y)(\lambda-y)\big)^{\frac{1-s}{2}}  (x-y)^{-1}\rd{y}\cdot \big((1-x)(x-\lambda)\big)_+^{-\frac{1-s}{2}}\rd{x} \\
    \lesssim & \epsilon  + \int_{\lambda}^{\lambda_0}\int_{-\lambda}^\lambda (\lambda-y)^{\frac{1-s}{2}}  (x-y)^{-1}\rd{y}\cdot (x-\lambda)^{-\frac{1-s}{2}}\rd{x} \\
    \lesssim & \epsilon  + \int_{\lambda}^{\lambda_0}\int_{-\lambda}^\lambda (\lambda-y)^{-\frac{1+s}{2}}  \rd{y}\cdot (x-\lambda)^{-\frac{1-s}{2}}\rd{x} \\
    \lesssim & \epsilon  + \int_{\lambda}^{\lambda_0}(x-\lambda)^{-\frac{1-s}{2}}\rd{x} \\
    \lesssim & \epsilon^{\frac{1+s}{2}}\,.
\end{split}\end{equation}
Then, from \eqref{lam0cont1}, for $x\in [\lambda_0+\alpha,1-\alpha]$, we have
\begin{equation}\begin{split}
    |\mu_{\lambda_0}(x)-\mu_\lambda(x)| = & |Bal_{\lambda_0}[\mu_{\lambda}\chi_{(-\lambda_0,\lambda_0)}](x)| \\
    \lesssim  & Bal(|\mu_{\lambda}|\chi_{(-\lambda_0,\lambda_0)},[\lambda,1])(x) \\
    \lesssim  & \int_{-\lambda_0}^{\lambda_0} \big((1-y)(\lambda_0-y)\big)^{\frac{1-s}{2}}  (x-y)^{-1}|\mu_{\lambda}(y)|\rd{y} \cdot \big((1-x)(x-\lambda_0)\big)_+^{-\frac{1-s}{2}} \\
    \lesssim  & \int_{-\lambda_0}^{\lambda_0} |\mu_{\lambda}(y)|\rd{y} \\ 
    \lesssim & \epsilon^{\frac{1+s}{2}}\,,
\end{split}\end{equation}
where we allow the implied constant to depend on the given number $\alpha$ (thus $x-y,1-x,x-\lambda_0$ are bounded from below). This finishes the proof.

\end{proof}

\begin{lemma}\label{lem_Fcont}
    The function $F(\lambda)$ defined in \eqref{F} is continuous on $(0,1)$.
\end{lemma}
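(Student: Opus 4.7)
I plan to prove continuity at a fixed $\lambda_0 \in (0,1)$ by writing
\[
F(\lambda) - F(\lambda_0) \;=\; \int_{-\lambda}^{\lambda} W*(\mu_\lambda - \mu_{\lambda_0})(x)\,\rd{x} \;+\; \Big(\int_{-\lambda}^{\lambda} - \int_{-\lambda_0}^{\lambda_0}\Big) V[\mu_{\lambda_0}](x)\,\rd{x}.
\]
The second ``domain-change'' piece is immediate: Lemma \ref{lem_lam1} guarantees that $V[\mu_{\lambda_0}]$ is continuous on $\mathbb{R}$, hence bounded on any compact interval, so this term is $O(|\lambda-\lambda_0|)$ and vanishes as $\lambda\to\lambda_0$.

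For the first ``measure-change'' piece I would apply Fubini to rewrite it as $\int K_\lambda(y)\,\rd{(\mu_\lambda - \mu_{\lambda_0})(y)}$, where
\[
K_\lambda(y):=\int_{-\lambda}^{\lambda}|x-y|^{-s}\,\rd{x}.
\]
The kernel $K_\lambda$ is uniformly bounded by $\frac{2}{1-s}$ on all of $\mathbb{R}$ for every $\lambda\in(0,1)$, and by Proposition \ref{prop_baldef} combined with Lemma \ref{lem_lam12} both $\mu_\lambda$ and $\mu_{\lambda_0}$ have total variation bounded by $\|\mu_0\|$, so the Fubini interchange is legitimate. It therefore suffices to prove that $\|\mu_\lambda - \mu_{\lambda_0}\|\to 0$ (in total variation) as $\lambda\to\lambda_0$.

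For this total-variation estimate I split into left and right limits and invoke Lemma \ref{lem_lam12}. For $\lambda < \lambda_0$, Lemma \ref{lem_lam12} yields $\mu_\lambda - \mu_{\lambda_0} = \mu_\lambda \chi_{(-\lambda_0,\lambda_0)} - Bal_{\lambda_0}[\mu_\lambda\chi_{(-\lambda_0,\lambda_0)}]$, so the contracting property $\|Bal(\sigma,K)\|\le\|\sigma\|$ from Proposition \ref{prop_baldef} gives $\|\mu_\lambda - \mu_{\lambda_0}\| \le 2\int|\mu_\lambda|\chi_{(-\lambda_0,\lambda_0)}\,\rd{x} \lesssim (\lambda_0-\lambda)^{(1+s)/2}$, where the last estimate is precisely the bound \eqref{alpest1} derived in the proof of Lemma \ref{lem_mulamcont}. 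For $\lambda > \lambda_0$ the symmetric identity $\mu_\lambda - \mu_{\lambda_0} = Bal_\lambda[\mu_{\lambda_0}\chi_{(-\lambda,\lambda)}] - \mu_{\lambda_0}\chi_{(-\lambda,\lambda)}$ gives $\|\mu_\lambda-\mu_{\lambda_0}\|\le 2\int|\mu_{\lambda_0}|\chi_{(-\lambda,\lambda)}\,\rd{x}$; using the endpoint blow-up $|\mu_{\lambda_0}(x)| \lesssim (|x|-\lambda_0)^{-(1-s)/2}$ near $\pm\lambda_0$ inherited from Lemma \ref{lem_mucont} and the explicit form of $\omega_{\lambda_0,1}$, this integrates to $\lesssim (\lambda-\lambda_0)^{(1+s)/2}$. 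The only subtle point is the left/right dichotomy: the two cases require different applications of Lemma \ref{lem_lam12}, and for the right-continuity case one must use the inner endpoint regularity of $\mu_{\lambda_0}$ at $\pm\lambda_0$ (Lemma \ref{lem_mucont}), which is not directly contained in Lemma \ref{lem_mulamcont}.
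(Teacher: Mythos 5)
Your proposal is correct and follows essentially the same route as the paper: reduce continuity of $F$ to a total-variation estimate $\|\mu_{\lambda}-\mu_{\lambda_0}\|\lesssim|\lambda-\lambda_0|^{\frac{1+s}{2}}$ via the balayage relation \eqref{lem_lam12_1}, the mass-decreasing property of $Bal$, and the integrable $(\,\cdot-\lambda)^{-\frac{1-s}{2}}$ endpoint behavior (the \eqref{alpest1}-type bound), then conclude with a Fubini/bounded-kernel argument playing the role of the paper's $\|W\|_{L^1([-2,2])}$ factor. The only cosmetic difference is that the paper proves the single two-sided estimate $|F(\lambda+\epsilon)-F(\lambda)|\lesssim\epsilon^{\frac{1+s}{2}}$, where the domain-change term disappears automatically because $V[\mu_\lambda]=0$ on $K_{\lambda,1}$, so no left/right case split (and no separate appeal to Lemma \ref{lem_mucont}) is needed, whereas your version handles these pieces explicitly but equivalently.
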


\begin{proof}
For fixed $0<\lambda<\lambda+\epsilon<1$ that stay away from 0 and 1, we may proceed similarly as in the proof of Lemma \ref{lem_mulamcont} to see that
\begin{equation}
    \int |\mu_\lambda|\chi_{(-(\lambda+\epsilon),\lambda+\epsilon)}\rd{x} \lesssim \epsilon^{\frac{1+s}{2}}\,.
\end{equation}
Combining with \eqref{lem_lam12_1} and the fact that balayage operators decrease total mass (c.f. Proposition \ref{prop_equi}), we see that
\begin{equation}
    \int |\mu_{\lambda+\epsilon}-\mu_\lambda|\rd{x} \lesssim \epsilon^{\frac{1+s}{2}}\,.
\end{equation}
Therefore
\begin{equation}
    |F(\lambda+\epsilon)-F(\lambda)| = \left|\int_{-(\lambda+\epsilon)}^{\lambda+\epsilon} W*(\mu_{\lambda+\epsilon}-\mu_\lambda)\rd{x}\right| \le \|W\|_{L^1([-2,2])} \cdot \int |\mu_{\lambda+\epsilon}-\mu_\lambda|\rd{x} \lesssim \epsilon^{\frac{1+s}{2}}\,,
\end{equation}
which finishes the proof.

\end{proof}

\begin{lemma}\label{lem_lampos}
    There exists $\lambda_+\in [\sqrt{\frac{1-s}{1+s}},1)$ such that $\mu_\lambda$ is positive for any $\lambda \in (\lambda_+,1)$, and $\mu_\lambda$ is not everywhere positive for any $\lambda\in (0,\lambda_+)$.
\end{lemma}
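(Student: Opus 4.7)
The plan is to define
\[
\lambda_+ := \inf\{\lambda \in (0,1) : \mu_\lambda \ge 0\}
\]
and verify three properties: (a) the set is non-empty, so $\lambda_+ < 1$; (b) it is upward-closed, so $\mu_\lambda \ge 0$ for every $\lambda > \lambda_+$; and (c) $\lambda_+ \ge \sqrt{(1-s)/(1+s)}$. Granted these, the second assertion of the lemma (failure of positivity for $\lambda \in (0,\lambda_+)$) is immediate from the definition of the infimum combined with the contrapositive of (b).

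For (b), I will invoke the balayage identity \eqref{lem_lam12_1}: if $0 < \lambda_1 < \lambda_2 < 1$, then
\[
\mu_{\lambda_2} = \mu_{\lambda_1}\chi_{K_{\lambda_2,1}} + Bal_{\lambda_2}[\mu_{\lambda_1}\chi_{(-\lambda_2,\lambda_2)}].
\]
If $\mu_{\lambda_1}\ge 0$, both terms on the right are non-negative measures, since the balayage of a non-negative measure is non-negative by Proposition \ref{prop_baldef}. Hence $\mu_{\lambda_2}\ge 0$.

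For (c), I will use the explicit sign information in Lemma \ref{lem_mu0}. Fix $\lambda \in (0,\sqrt{(1-s)/(1+s)})$. On the subinterval $J := (\lambda,\sqrt{(1-s)/(1+s)}) \subset [\lambda,1] \subset K_{\lambda,1}$, the first term $\mu_0\chi_{K_{\lambda,1}}$ in the decomposition \eqref{lem_lam12_2} is strictly negative, since $\mu_0<0$ on $(0,\sqrt{(1-s)/(1+s)})$. Since $(-\lambda,\lambda) \subset (-\sqrt{(1-s)/(1+s)},\sqrt{(1-s)/(1+s)})$, the restriction $\mu_0\chi_{(-\lambda,\lambda)}$ is also a non-positive (signed) measure, so its balayage is non-positive by linearity of $Bal_\lambda$. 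Hence $\mu_\lambda<0$ on $J$, so such $\lambda$ does not belong to the infimum set and $\lambda_+ \ge \sqrt{(1-s)/(1+s)}$.

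The main obstacle is (a): producing any single $\lambda_0 \in (0,1)$ with $\mu_{\lambda_0} \ge 0$. My approach is a rescaling argument as $\lambda \to 1^-$. Put $\xi := (x-\lambda)/(1-\lambda)$ and $\nu_\lambda(\xi) := (1-\lambda)\mu_\lambda(\lambda + (1-\lambda)\xi)$ on $[0,1]$. Using the evenness of $\mu_\lambda$ (inherited from the uniqueness in Lemma \ref{lem_lam1}) and splitting the Riesz potential into the near-interaction on $[\lambda,1]$ and the far-interaction with $[-1,-\lambda]$ (where $|x-y| \to 2$ uniformly), the identity $W*\mu_\lambda = \frac{3}{1-s}x^2$ on $[\lambda,1]$ becomes, uniformly in $\xi\in[0,1]$,
\[
(1-\lambda)^{-s}\int_0^1|\xi-\eta|^{-s}\nu_\lambda(\eta)\rd\eta + 2^{-s}\|\nu_\lambda\| + O(1-\lambda) = \tfrac{3}{1-s}.
\]
This forces $\|\nu_\lambda\| = O((1-\lambda)^s)$, and the rescaled measure $\hat\nu_\lambda := (1-\lambda)^{-s}\nu_\lambda$ converges to a positive multiple of the Riesz equilibrium measure $\omega_{0,1}$ on $[0,1]$, singled out by $W*\hat\nu_\infty = \frac{3}{1-s}$ there. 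Since $\omega_{0,1}>0$ on $(0,1)$ with the correct endpoint singularity, a quantitative version of this convergence --- phrased in terms of $\phi_\lambda := \mu_\lambda/\omega_{\lambda,1}$, which is $C^1$ on $[\lambda,1]$ by Lemma \ref{lem_mucont} --- yields $\phi_\lambda > 0$ on $[\lambda,1]$, hence $\mu_\lambda > 0$ on $K_{\lambda,1}$ (the other half by evenness), for all $\lambda$ sufficiently close to $1$. The delicate point I expect to wrestle with is controlling the perturbation uniformly up to the endpoints of $[0,1]$ where $\omega_{0,1}$ is singular; I plan to do this by a fixed-point estimate for the Riesz singular integral equation on $[0,1]$, exploiting the inverse formula \eqref{eq44}-type structure used for $\sigma_0$ in the logarithmic IBA and its Riesz analogue implicit in Lemma \ref{lem_mucont}.
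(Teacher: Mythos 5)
Your framing (define $\lambda_+$ as an infimum), your monotonicity step via \eqref{lem_lam12_1} and the positivity of $Bal_{\lambda_2}$, and your lower bound $\lambda_+\ge\sqrt{(1-s)/(1+s)}$ from the sign of $\mu_0$ in Lemma \ref{lem_mu0} together with \eqref{lem_lam12_2} all match the paper and are fine. The problem is part (a), which is the entire substance of the lemma, and there your argument is a plan rather than a proof, with two concrete gaps. First, the mass bound: you deduce $\|\nu_\lambda\|=O((1-\lambda)^s)$ from the potential identity, but $\mu_\lambda$ is a priori a \emph{signed} measure (that is exactly what is at stake), so the identity on $[\lambda,1]$ only controls a signed quantity such as $\int\nu_\lambda$ (e.g.\ by pairing with the equilibrium measure of $[0,1]$), not the total variation; without a total-variation bound you have no compactness for the rescaled family $\hat\nu_\lambda$, and treating $\|\nu_\lambda\|$ as if $\nu_\lambda\ge0$ is circular. (Compare Lemma \ref{lem_Fneg}, where the analogous bound $\int_{K_{\lambda,1}}\rd\mu_\lambda\lesssim\epsilon^s$ is derived only \emph{after} positivity is known.) Second, even granting weak-$*$ convergence of $\hat\nu_\lambda$ to a multiple of $\omega_{0,1}$, weak convergence does not yield pointwise positivity of $\mu_\lambda$; you need uniform control of $\mu_\lambda/\omega_{\lambda,1}$ up to both endpoints of $[\lambda,1]$, and this quantitative step --- which is the hard analytic content --- is explicitly deferred to an unproven ``fixed-point estimate'' for the Riesz singular integral equation.

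For contrast, the paper's route to positivity near $\lambda=1$ avoids any limit of the $\mu_\lambda$ themselves: it takes the constrained minimizer $\rho_{\lambda,m}$ of $E$ on $K_{\lambda,1}$ with small total mass $m=m_0\sim C_{s1}(1-\lambda)^{1+s}$ (Lemma \ref{lem_EU}), shows $\supp\rho_{\lambda,m_0}=K_{\lambda,1}$ by ruling out the three types of maximal vacuum intervals with first- and second-derivative estimates of $V[\rho_{\lambda,m_0}]$ and the boundary continuity \eqref{lem_EU_1} (a maximum-principle argument), then checks that the Euler--Lagrange constant $C_0$ is negative for $1-\lambda$ small, and finally adds a positive multiple of the equilibrium measure on $K_{\lambda,1}$ to raise the potential to $0$; by uniqueness in Lemma \ref{lem_lam1} this positive sum \emph{is} $\mu_\lambda$. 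If you want to salvage your asymptotic approach you would first need a total-variation bound on $\mu_\lambda$ independent of sign (e.g.\ via the balayage representation \eqref{lem_lam12_2}) and then an endpoint-uniform estimate on $\mu_\lambda/\omega_{\lambda,1}$, at which point the paper's constructive comparison is simpler.
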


\begin{proof}
The explicit formula for $\mu_0$ in Lemma \ref{lem_mu0}, together with \eqref{lem_lam12_2} and the positivity of the operator $Bal_\lambda$, shows that $\mu_\lambda(x)$ is not everywhere positive for any $\lambda \in (0,\sqrt{\frac{1-s}{1+s}})$. Therefore, it suffices to prove that $\mu_\lambda$ is positive for $\lambda$ sufficiently close to 1, since the positivity of some $\mu_\lambda$ would imply that for $\mu_{\lambda'}$ for any $\lambda'>\lambda$. 

Denote $\epsilon=1-\lambda>0$, and assume $\epsilon\le \frac{1}{2}$. Let $\rho_{\lambda,m}$ be the unique minimizer of the energy $E$ with total mass $m>0$ on $K=K_{\lambda,1}$, given by Lemma \ref{lem_EU}. By symmetry, we have
\begin{equation}    
    \int_{[\lambda,1]}\rd{\rho_{\lambda,m}} = \frac{m}{2}\,.
\end{equation}
We claim that $\supp\rho_{\lambda,m}=K_{\lambda,1}$ if
\begin{equation}
    m \ge m_0:= \max\{C_{s1} \epsilon^{s+1},C_{s2} \epsilon^{s+2}\}\,,
\end{equation}
where $C_{s1},C_{s2}$ are some constants independent of $\epsilon$, to be determined. To see this, assume the contrary that $\supp\rho_{\lambda,m}\ne K_{\lambda,1}$, then there are three possibilities:
\begin{itemize}
    \item There exists a maximal interval $(x_1,x_2)$ in $(\lambda,1)\backslash \supp\rho_{\lambda,m}$, i.e., $\lambda<x_1<x_2<1$, $x_1,x_2\in \supp\rho_{\lambda,m}$ and $(x_1,x_2)\cap\supp\rho_{\lambda,m}=\emptyset$. For $x\in (x_1,x_2)$, we estimate
    \begin{equation}
        V[\rho_{\lambda,m}]''(x) = (W''*\rho_{\lambda,m})(x) + U''(x) \ge \frac{m}{2} s(s+1)\epsilon^{-s-2} - \frac{6}{1-s} > 0\,,
    \end{equation}
    (by taking $C_{s2}=\frac{24}{(1-s)s(s+1)}$) which leads to a contradiction against the Euler-Lagrange condition by maximum principle, in view of the continuity in \eqref{lem_EU_1}.
    \item There exists a maximal interval $(\lambda,x_1)$ in $(\lambda,1)\backslash \supp\rho_{\lambda,m}$. For $x\in (\lambda,x_1)$, using $\epsilon\le \frac{1}{2}$, we estimate
    \begin{equation}
        V[\rho_{\lambda,m}]'(x) = (W'*\rho_{\lambda,m})(x) + U'(x) \ge \frac{m}{2} s(\epsilon^{-s-1}-1) - \frac{6}{1-s} \ge \frac{m}{4} s\epsilon^{-s-1} - \frac{6}{1-s} > 0\,,
    \end{equation}
    (by taking $C_{s1}=\frac{48}{(1-s)s}$) which leads to a contradiction against the Euler-Lagrange condition.
    \item There exists a maximal interval $(x_1,1)$ in $(\lambda,1)\backslash \supp\rho_{\lambda,m}$. One can directly get $V[\rho_{\lambda,m}]'(x)<0$ on $(x_1,1)$ which leads to a contradiction.
\end{itemize}
This proves the claim. We also notice that $m_0=C_{s1} \epsilon^{s+1}$ for sufficiently small $\epsilon$.

Then we see that the constant value $C_0=V[\rho_{\lambda,m_0}]$ on $K_{\lambda,1}$ is bounded by 
\begin{equation}
    C_0 = \frac{1}{2\epsilon}\int_{K_{\lambda,1}}\big((W*\rho_{\lambda,m_0})(x) +U(x)\big)\rd{x} \le \frac{1}{\epsilon}m_0\cdot \int_{-\epsilon/2}^{\epsilon/2} |y|^{-s}\rd{y} - \frac{3}{1-s}\cdot \frac{1}{4} = 2^s C_{s1}\epsilon -\frac{3}{4(1-s)} < 0\,,
\end{equation}
when $\epsilon$ is sufficiently small. Then, adding $\rho_{\lambda,m_0}$ with a positive constant multiple of the equilibrium measure on $K_{\lambda,1}$, we get $\mu_\lambda$ which is positive.

\end{proof}

\begin{lemma}\label{lem_Fneg}
    The function $F$ satisfies
    \begin{equation}
        \lim_{\lambda\rightarrow 1^-} F(\lambda) = \int_{-1}^1 U(x)\rd{x} < 0\,.
    \end{equation}
\end{lemma}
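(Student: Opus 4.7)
The plan is to rewrite $F(\lambda)$ so that the $\lambda$-dependence in the limits of integration is eliminated. Since $V[\mu_\lambda]\equiv 0$ on $K_{\lambda,1}$ (Lemma~\ref{lem_lam1}) and $[-\lambda,\lambda]\cup K_{\lambda,1}=[-1,1]$ (overlap of measure zero), definition~\eqref{F} gives
\[
    F(\lambda) \;=\; \int_{-\lambda}^{\lambda} V[\mu_\lambda](x)\rd{x} + \int_{K_{\lambda,1}} V[\mu_\lambda](x)\rd{x} \;=\; \int_{-1}^{1}(W*\mu_\lambda)(x)\rd{x} + \int_{-1}^{1}U(x)\rd{x}.
\]
The second integral equals $-\tfrac{2}{1-s}$, independent of $\lambda$; this is the candidate limit and is manifestly negative. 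It therefore suffices to prove $\int_{-1}^{1}(W*\mu_\lambda)(x)\rd{x}\to 0$ as $\lambda\to 1^-$.

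For $\lambda$ sufficiently close to $1$ (i.e.\ $\lambda>\lambda_+$), Lemma~\ref{lem_lampos} gives that $\mu_\lambda\ge 0$, so Fubini yields
\[
    0 \le \int_{-1}^{1}(W*\mu_\lambda)(x)\rd{x} = \int_{K_{\lambda,1}}\Big(\int_{-1}^{1}|x-y|^{-s}\rd{x}\Big)\rd{\mu_\lambda(y)} \le \frac{2^{2-s}}{1-s}\|\mu_\lambda\|,
\]
where the inner integral is bounded uniformly for $y\in K_{\lambda,1}$. The argument thus reduces to the mass bound $\|\mu_\lambda\|\to 0$.

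For this I evaluate the Euler--Lagrange identity $V[\mu_\lambda]=0$ at the two endpoints $x=1$ and $x=\lambda$ and add them:
\[
    \int_{K_{\lambda,1}}\big(|1-y|^{-s}+|\lambda-y|^{-s}\big)\rd{\mu_\lambda(y)} = \frac{3(1+\lambda^2)}{1-s}\le \frac{6}{1-s}.
\]
For $y\in[\lambda,1]$, since $(1-y)+(y-\lambda)=1-\lambda$, convexity of $t\mapsto t^{-s}$ gives the pointwise lower bound $(1-y)^{-s}+(y-\lambda)^{-s}\ge 2\big(\tfrac{1-\lambda}{2}\big)^{-s}=2^{1+s}(1-\lambda)^{-s}$. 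The measure $\mu_\lambda$ is even (by the uniqueness assertion of Lemma~\ref{lem_lam1} applied to its reflection, which also satisfies \eqref{lem_lam1_1}), so $\mu_\lambda\chi_{[\lambda,1]}$ has mass $\tfrac12\|\mu_\lambda\|$; discarding the (positive) contribution from $[-1,-\lambda]$ in the displayed integral yields
\[
    2^{s}(1-\lambda)^{-s}\|\mu_\lambda\| \le \frac{6}{1-s}, \qquad\text{i.e.}\qquad \|\mu_\lambda\| \le \frac{6\cdot 2^{-s}}{1-s}(1-\lambda)^{s} \longrightarrow 0,
\]
which combined with the Fubini bound completes the proof.

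The real content is the mass estimate $\|\mu_\lambda\|\lesssim(1-\lambda)^{s}$; the trick is to use \emph{two} evaluation points of the Euler--Lagrange identity so that the sum of the weights $|1-y|^{-s}+|\lambda-y|^{-s}$ stays of order $(1-\lambda)^{-s}$ uniformly in $y\in[\lambda,1]$, matching the singularity rate of either weight individually and thereby forcing the mass to vanish. Evaluating at only one endpoint is not enough, because the resulting weight can be as small as $O(1)$ over most of $[\lambda,1]$; this is the one nontrivial step, while the rest is routine Fubini and continuity.
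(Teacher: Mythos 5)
Your proposal is correct and follows essentially the same route as the paper: both use the positivity of $\mu_\lambda$ near $\lambda=1$ from Lemma \ref{lem_lampos}, the rewriting $F(\lambda)-\int_{-1}^1 U(x)\rd{x}=\int_{-1}^1 (W*\mu_\lambda)(x)\rd{x}$, and a mass estimate $\|\mu_\lambda\|\lesssim (1-\lambda)^s$ extracted from the relation $V[\mu_\lambda]=0$ on $K_{\lambda,1}$. The only (minor) difference is how that mass bound is obtained: the paper averages the identity over the interval $[\lambda,1]$, while you evaluate it at the two endpoints $x=\lambda,1$ and use convexity of $t\mapsto t^{-s}$ — both give the same $(1-\lambda)^{-s}$ weight and hence the same conclusion.
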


\begin{proof}
By Lemma \ref{lem_lampos}, $\mu_\lambda$ is positive for $\lambda$ sufficiently close to 1. Then, denoting $\epsilon=1-\lambda$, we have
\begin{equation}\begin{split}
    0 = & \frac{1}{\epsilon}\int_{[\lambda,1]} ((W*\mu_\lambda)(x) + U(x))\rd{x} \\ \ge & \frac{1}{\epsilon}\int_{0}^{\epsilon} y^{-s}\rd{y} \cdot \int_{[\lambda,1]} \rd{\mu_\lambda} - \frac{3}{1-s} \\ = & \frac{1}{2(1-s)}\epsilon^{-s} \int_{K_{\lambda,1}} \rd{\mu_\lambda} - \frac{3}{1-s}\,,
\end{split}\end{equation}
which implies
\begin{equation}
    \int_{K_{\lambda,1}} \rd{\mu_\lambda} \le 6\epsilon^s\,.
\end{equation}
Then we get
\begin{equation}
    F(\lambda)-\int_{-1}^1 U(x)\rd{x} = \int_{-1}^1 (W*\mu_\lambda)(x)\rd{x} \le \|W\|_{L^1([-2,2])} \cdot \int_{K_{\lambda,1}} \rd{\mu_\lambda} \lesssim \epsilon^s\,,
\end{equation}
which gives the conclusion.
\end{proof}

\section{The iterated balayage algorithm (IBA)}\label{sec_iba}

In this section we give the IBA as a sequence $\{\mu_{\lambda_j}\}$, where each $\mu_{\lambda_j}$ is the balayage of $\mu_{\lambda_{j-1}}$ onto the support of the positive part of $\mu_{\lambda_{j-1}}$. To be precise, we have the following.

\begin{proposition}\label{prop_iba}
    Define $\lambda_0=0$ and $\lambda_j,\,j=1,2,\dots$ iteratively by
    \begin{equation}
        \lambda_j = \inf\{x>\lambda_{j-1}: \mu_{\lambda_{j-1}}(x) \ge 0 \}\,.
    \end{equation}
    Then we have
    \begin{enumerate}
        \item For each $j\ge 0$, $\lambda_j$ is well-defined with $\lambda_j\in [0,1)$. $\mu_{\lambda_j}$ is sign-changing, and $\frac{\mu_{\lambda_j}}{\omega_1}$ is strictly increasing on $x\in(\lambda_j,1)$.
        \item $\lambda_{j+1}>\lambda_j$, and $\lambda_\infty:=\lim_{j\rightarrow\infty}\lambda_j \in (0,1)$. $\frac{\mu_{\lambda_\infty}}{\omega_1}$ is positive and strictly increasing on $(\lambda_\infty,1)$. $\lambda_\infty$ is the same as $\lambda_+$ given by Lemma \ref{lem_lampos}.
        \item  $W*\mu_{\lambda_\infty}$ is $C^1$ on $(-1,1)$.
        \item $V[\mu_{\lambda_\infty}] > 0$ on $(-\lambda_\infty,\lambda_\infty)$. As a consequence, $F(\lambda_\infty)>0$.
    \end{enumerate}    
\end{proposition}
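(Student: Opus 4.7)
The plan is to prove the four items in sequence, each building on the previous.

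For item 1, I would induct on $j$. The base $j=0$ follows directly from Lemma \ref{lem_mu0}, which gives $\mu_0/\omega_1$ strictly increasing and sign-changing on $(0,1)$ with $\lambda_1 = \sqrt{(1-s)/(1+s)}$. For the inductive step, assume $\mu_{\lambda_{j-1}}/\omega_1$ is strictly increasing and sign-changing on $(\lambda_{j-1},1)$; then $\lambda_j$ is well-defined as the unique zero of this ratio in the interval, and $\mu_{\lambda_{j-1}}\chi_{(-\lambda_j,\lambda_j)}$ is non-positive. The representation $\mu_{\lambda_j}=\mu_{\lambda_{j-1}}\chi_{K_{\lambda_j,1}}+Bal_{\lambda_j}[\mu_{\lambda_{j-1}}\chi_{(-\lambda_j,\lambda_j)}]$ from Lemma \ref{lem_lam12}, combined with Proposition \ref{prop_blam} applied to the non-negative negation of the swept measure, yields strict monotonicity of $\mu_{\lambda_j}/\omega_1$ on $(\lambda_j,1)$. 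Sign-changing of $\mu_{\lambda_j}$ follows because the balayage term diverges to $-\infty$ like $-C\omega_{\lambda_j,1}(x)$ near $x=\lambda_j^+$ (via Corollary \ref{cor_eqpos}), while the first term (which vanishes at $\lambda_j$) dominates with positive sign near $x=1$.

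For item 2, $\lambda_{j+1}>\lambda_j$ is immediate; Lemma \ref{lem_lampos} gives $\lambda_j\le\lambda_+<1$ (since $\lambda_j>\lambda_+$ would force $\mu_{\lambda_j}$ positive, contradicting its sign-change). The iteration $\mu_{\lambda_{j+1}}=\mu_{\lambda_j}\chi_{K_{\lambda_{j+1},1}}+Bal_{\lambda_{j+1}}[\mu_{\lambda_j}\chi_{(-\lambda_{j+1},\lambda_{j+1})}]$ gives $\mu_{\lambda_{j+1}}\le\mu_{\lambda_j}$ on $[\lambda_{j+1},1]$ with $\mu_{\lambda_j}\ge 0$ there; taking $j\to\infty$ via Lemma \ref{lem_mulamcont} yields $\mu_{\lambda_\infty}\ge 0$ and monotonicity of $\mu_{\lambda_\infty}/\omega_1$ non-decreasing on $(\lambda_\infty,1)$. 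The analog of Lemma \ref{lem_eqpos} yields real analyticity, so non-constancy forces strict monotonicity. Since $\mu_{\lambda_\infty}$ is positive, Lemma \ref{lem_lampos} gives $\lambda_\infty\ge\lambda_+$, whence $\lambda_\infty=\lambda_+$.

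For item 3, the crux is the soft-edge asymptotic $\mu_{\lambda_\infty}(x)\sim c(x-\lambda_\infty)^{(1+s)/2}$ as $x\to\lambda_\infty^+$ (and symmetrically at $-\lambda_\infty$). Non-decreasing non-negativity of $\mu_{\lambda_\infty}/\omega_1$ on $(\lambda_\infty,1)$ bounds this ratio near $\lambda_\infty^+$ by its value at any interior point, so $\mu_{\lambda_\infty}$ is bounded near the edge; since $\omega_{\lambda_\infty,1}$ blows up there like $(x-\lambda_\infty)^{-(1-s)/2}$, the ratio $\mu_{\lambda_\infty}/\omega_{\lambda_\infty,1}$ tends to $0$, and by the $C^1$ regularity from Lemma \ref{lem_mucont} it takes value $0$ at $\lambda_\infty$. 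This gives the claimed asymptotic. A dominated-convergence argument (the soft-edge exponent $(1+s)/2$ tames the $|x-y|^{-s-1}$ singularity from differentiating the kernel) shows that $(W*\mu_{\lambda_\infty})'$ extends continuously across $\pm\lambda_\infty$, proving item 3.

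For item 4, item 3 combined with $V[\mu_{\lambda_\infty}]=0$ on $K_{\lambda_\infty,1}$ gives $V(\pm\lambda_\infty)=V'(\pm\lambda_\infty)=0$. Since $\mu_{\lambda_\infty}\ge 0$ and $W^{(4)}(x)=s(s+1)(s+2)(s+3)|x|^{-s-4}>0$, we have $V^{(4)}=W^{(4)}*\mu_{\lambda_\infty}>0$ on the gap $(-\lambda_\infty,\lambda_\infty)$, where $V$ is $C^\infty$. An interior zero of $V$ would push the zero-count of $V$ on $[-\lambda_\infty,\lambda_\infty]$ (counted with multiplicity) to at least $5$; iterated Rolle (standard at the endpoints where $V\in C^1$, higher-order in the interior where $V\in C^\infty$) then produces a zero of $V^{(4)}$, a contradiction. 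Hence $V$ has constant sign on the gap; the direct computation $V(0)=\int_{K_{\lambda_\infty,1}}|y|^{-s}\mu_{\lambda_\infty}(y)\,dy>0$ fixes this sign as positive, and $F(\lambda_\infty)=\int_{-\lambda_\infty}^{\lambda_\infty}V[\mu_{\lambda_\infty}]\,dx>0$ follows. The main obstacle I anticipate is item 3: extracting the soft-edge exponent requires combining IBA monotonicity, boundedness, and the $C^1$ regularity from Lemma \ref{lem_mucont}, and then translating this density asymptotic into $C^1$ regularity of the Riesz potential is where the analytic heart of the argument lies.
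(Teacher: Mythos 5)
Your strategy for Items 1--3 is essentially the paper's: the same induction built on the decomposition \eqref{lem_lam12_1}, Proposition \ref{prop_blam} for the monotonicity of the swept negative part, Lemma \ref{lem_mucont} for the edge regularity, and Lemma \ref{lem_mulamcont} to pass to the limit. Your Item 4, however, is a genuinely different and valid finish: you count zeros with multiplicity (double zeros of $V$ at $\pm\lambda_\infty$ from the $C^1$ matching, plus a hypothetical interior zero) and iterate Rolle to contradict $V^{(4)}>0$, then fix the sign by the direct evaluation $V[\mu_{\lambda_\infty}](0)=(W*\mu_{\lambda_\infty})(0)>0$ since $U(0)=0$. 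The paper instead uses strict convexity of $v''$, the identity $\int_{-\lambda_\infty}^{\lambda_\infty}v''\,\rd{x}=v'(\lambda_\infty)-v'(-\lambda_\infty)=0$ to locate a sign-change point $y$, and then integration plus a maximum principle; your route avoids locating $y$ and is arguably cleaner, given the same input $v(\pm\lambda_\infty)=v'(\pm\lambda_\infty)=0$. In Item 3, note you only need (and your argument only yields) the upper bound $\mu_{\lambda_\infty}(x)\lesssim (x-\lambda_\infty)^{\frac{1+s}{2}}$, not a genuine asymptotic with a positive constant; that bound suffices.

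Two steps are asserted without justification and are genuine (though fixable) gaps. First, in Item 1, the claim that near $x=1$ the term $\mu_{\lambda_j}\chi_{K_{\lambda_{j+1},1}}$ ``dominates with positive sign'' over the balayage term is not obvious: both terms carry the same $(1-x)^{-\frac{1-s}{2}}$ singularity at $x=1$ with coefficients of opposite signs, and nothing in your argument compares them. What must actually be excluded is that $\mu_{\lambda_{j+1}}$ is a nonpositive measure; the paper does this via Lemma \ref{lem_lampos} (if $\mu_{\lambda_{j+1}}\le 0$, then by \eqref{lem_lam12_1} and positivity of $Bal_\lambda$ every $\mu_\lambda$ with $\lambda>\lambda_{j+1}$ would be nonpositive, contradicting positivity of $\mu_\lambda$ for $\lambda$ near $1$); alternatively, a nonpositive $\mu_{\lambda_{j+1}}$ would give $W*\mu_{\lambda_{j+1}}\le 0$, contradicting $W*\mu_{\lambda_{j+1}}=\frac{3}{1-s}x^2>0$ on $K_{\lambda_{j+1},1}$. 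Second, in Item 2 you derive strict monotonicity of $\frac{\mu_{\lambda_\infty}}{\omega_1}$ from ``real analyticity plus non-constancy,'' but non-constancy is precisely what needs proof. The paper's argument is structural: $\frac{\mu_{\lambda_\infty}}{\omega_1}$ is the sum of $\frac{\mu_0}{\omega_1}$, which is strictly increasing, and the increments $\frac{\mu_{\lambda_{j+1}}-\mu_{\lambda_j}}{\omega_1}$, each increasing by Proposition \ref{prop_blam}, so the limit is strictly increasing. If you insist on your route you must still rule out constancy (e.g.\ constancy would force the potential of $\omega_1\chi_{(-\lambda_\infty,\lambda_\infty)}$ to be a quadratic on $(\lambda_\infty,1)$, impossible since its fourth derivative there is strictly positive), and this needs to be said explicitly.
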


\begin{proof}

{\bf Item 1}:

We prove Item 1 by induction on $j$. The case $j=0$ is clearly true due to Lemma \ref{lem_mu0}. Suppose this statement is true for some $j$. Since $\mu_{\lambda_j}$ is sign-changing and $\frac{\mu_{\lambda_j}}{\omega_1}$ is strictly increasing on $x\in(\lambda_j,1)$, we see that  $\lambda_{j+1}\in [0,1)$ is well-defined, satisfying $\lambda_{j+1}>\lambda_j$.

To analyze the monotonicity of $\frac{\mu_{\lambda_{j+1}}}{\omega_1}$ on $(\lambda_{j+1},1)$, we use \eqref{lem_lam12_1} to write
\begin{equation}\label{mulamj}
    \mu_{\lambda_{j+1}} = \mu_{\lambda_j}\chi_{K_{\lambda_{j+1},1}} + Bal_{\lambda_{j+1}}[\mu_{\lambda_j}\chi_{(-\lambda_{j+1},\lambda_{j+1})}] =: I_1 + I_2\,.
\end{equation}
By the induction hypothesis, $\frac{\mu_{\lambda_j}}{\omega_1}$ is strictly increasing on $(\lambda_j,1)$, with $\mu_{\lambda_j}(\lambda_{j+1})=0$ due to its continuity from Lemma \ref{lem_mucont}, the definition of $\lambda_{j+1}$ and the fact that $\lambda_{j+1}>\lambda_j$. Therefore $\mu_{\lambda_j}< 0$ on $(\lambda_j,\lambda_{j+1})$ and $\mu_{\lambda_j}> 0$ on $(\lambda_{j+1},1)$. 

On the RHS of \eqref{mulamj}, $\frac{I_1}{\omega_1}$ is strictly increasing on $(\lambda_{j+1},1)$. $\frac{I_2}{\omega_1}$ is also increasing on $(\lambda_{j+1},1)$ due to $\mu_{\lambda_j}\chi_{(-\lambda_{j+1},\lambda_{j+1})}< 0$ and Proposition \ref{prop_blam}. Therefore we conclude that $\frac{\mu_{\lambda_{j+1}}}{\omega_1}$ is strictly increasing on $(\lambda_{j+1},1)$. 

Notice that $\mu_{\lambda_{j+1}}$ cannot be a negative measure, because otherwise one would have that $\mu_{\lambda}$ is negative for any $\lambda\in (\lambda_{j+1},1)$ (due to \eqref{lem_lam12_1} and the positivity of the operator $Bal_\lambda$), contradicting Lemma \ref{lem_lampos}. Then we show that $\mu_{\lambda_{j+1}}$ cannot be a positive measure. In fact, we already proved that $\mu_{\lambda_j} < 0$ on $(\lambda_j,\lambda_{j+1})$ and $\mu_{\lambda_j} > 0$ on $(\lambda_{j+1},1)$. By Lemma \ref{lem_mucont}, we see that $I_1(x)$ (defined in \eqref{mulamj}) is continuous on $[\lambda_{j+1},\lambda_{j+1}+\epsilon]$ for small $\epsilon>0$, with $I_1(\lambda_{j+1})=0$. On the other hand, by Corollary \ref{cor_eqpos} with a negative linear combination with weight $\mu_{\lambda_j}(X)\chi_{(-\lambda_{j+1},\lambda_{j+1})}(X)$, we see that $\lim_{x\rightarrow \lambda_{j+1}^-}I_2(x)=-\infty$. Therefore $\mu_{\lambda_{j+1}}(x)<0$  when $x-\lambda_{j+1}>0$ is sufficiently small. Therefore we see that $\mu_{\lambda_{j+1}}$ is sign-changing, and the induction is finished.

{\bf Item 2}:

The property $\lambda_{j+1}>\lambda_j$ was already proved in the middle of the previous induction. Then it follows that $\lambda_\infty:=\lim_{j\rightarrow\infty}\lambda_j$ exists and is in $(0,1]$. Since each $\mu_{\lambda_j}$ is sign-changing, we see that $\lambda_j\le \lambda_+<1$ by Lemma \ref{lem_lampos}, and thus we have $\lambda_\infty\le \lambda_+<1$. 

We notice that $\mu_{\lambda_j}\ge 0$ on $(\lambda_\infty,1)$ because otherwise one would have $\lambda_{j+1} > \lambda_\infty$ which is a contradiction. Then the nonnegative and increasing property of $\frac{\mu_{\lambda_\infty}}{\omega_1}$ on $(\lambda_\infty,1)$ follows from the same property of $\mu_{\lambda_j}$ and taking limit $j\rightarrow\infty$ by using Lemma \ref{lem_mulamcont}. This nonnegative property also implies $\lambda_\infty\ge\lambda_+$ by the definition of $\lambda_+$ in Lemma \ref{lem_lampos}. Therefore $\lambda_\infty=\lambda_+$. 

The increasing property of $\frac{\mu_{\lambda_\infty}}{\omega_1}$ is strict, because the proof of Item 1 shows that $\frac{\mu_{\lambda_{j+1}}-\mu_{\lambda_j}}{\omega_1}$ is increasing on $(\lambda_\infty,1)$ for every $j$. As a consequence, $\frac{\mu_{\lambda_\infty}}{\omega_1}$ is strictly positive on $(\lambda_\infty,1)$. This finishes the proof of Item 2.

{\bf Item 3}:


Lemma \ref{lem_mucont} gives that 
\begin{equation}
    u(x) := \mu_{\lambda_\infty}(x)\cdot \big((x-\lambda_\infty)(1-x)\big)^{\frac{1-s}{2}}\,,
\end{equation}
is $C^1$ on $[\lambda_\infty,1]$, and it is also nonnegative. Therefore the positive and strictly increasing property of $\frac{\mu_{\lambda_\infty}(x)}{\omega_1(x)} = C u(x)\cdot \big(\frac{1+x}{x-\lambda_\infty}\big)^{\frac{1-s}{2}}$ on $(\lambda_\infty,1)$ enforces that $u(\lambda_\infty)=0$. As a consequence, $\mu_{\lambda_\infty}$ is $C^1$ on the open interval $(\lambda_\infty,1)$, and
\begin{equation}
    |\mu_{\lambda_\infty}'(x)| \lesssim (x-\lambda_\infty)^{-\frac{1-s}{2}}\,,
\end{equation}
for $x-\lambda_\infty>0$ small. Then it follows that $W*\mu_{\lambda_\infty}'$ is continuous near $\lambda_\infty$ by calculating $-s-\frac{1-s}{2} =-\frac{1+s}{2} > -1$. The continuity of $W*\mu_{\lambda_\infty}'$ elsewhere is clear.

{\bf Item 4}:


From Item 3, we see that $v(x):=V[\mu_{\lambda_\infty}](x)$ is $C^1$ on $(-1,1)$. Therefore its definition, \eqref{lem_lam1_1}, implies that
\begin{equation}\label{vpm}
    v(\pm\lambda_\infty) = v'(\pm\lambda_\infty) = 0\,.
\end{equation}
Since $W$ is smooth away from 0, $U$ is smooth, and $\supp\mu_{\lambda_\infty}=K_{\lambda_\infty,1}$, we see that $v$ is smooth on $(-\lambda_\infty,\lambda_\infty)$. Notice that
\begin{equation}
    W^{(4)}(x) = (-s)(-1-s)(-2-s)(-3-s)|x|^{-4-s}>0,\,\forall x\ne 0; \quad U^{(4)}=0\,,
\end{equation}
we see that $v^{(4)}>0$ on $(-\lambda_\infty,\lambda_\infty)$. Therefore $v''(x)$ is strictly convex on $(-\lambda_\infty,\lambda_\infty)$. Since 
\begin{equation}
    \int_{-\lambda_\infty}^{\lambda_\infty} v''(x)\rd{x} = v'(\lambda_\infty)-v'(-\lambda_\infty)=0\,,
\end{equation}
we see that $v''$ has to be sign-changing. Since $v''(x)$ is also an even function, there exists $y\in (0,\lambda_\infty)$ such that $v''>0$ on $(-\lambda_\infty,-y)\cup (y,\lambda_\infty)$, and $v''<0$ on $(-y,y)$ (see Figure \ref{fig2} for illustration). Then, by doing integration and using \eqref{vpm}, we see that $v(x)>0$ on $(-\lambda_\infty,-y]\cup [y,\lambda_\infty)$. Then it follows that $v(x)>0$ on $(-y,y)$ by maximum principle. The fact that $F(\lambda_\infty)>0$ then follows from the definition of $F$ in \eqref{F}. 

\end{proof}

\begin{remark}[Comparing Proposition \ref{prop_iba} with {\cite[Section 6]{DOSW25}}]\label{rmk_compare}
    We compare our IBA in Proposition \ref{prop_iba} with the one in \cite[Section 6]{DOSW25} as follows. Their similarity is that both IBAs utilize the balayage operator $Bal_\lambda$, and rely on the monotonicity as in Item 1 of Proposition \ref{prop_iba}, which essentially comes from Proposition \ref{prop_blam}. They both stop at the smallest $\lambda$ value such that $\mu_\lambda$ is positive.

    The first difference between these two IBAs is that in Proposition \ref{prop_iba} we utilize the relation \eqref{lem_lam12_1} to build the iteration, while in \cite{DOSW25} they need to subtract a multiple of $Bal_\lambda[\omega_1]$ in each iteration. This is because \cite{DOSW25} seeks for certain minimizer with total mass one for the interaction potential $W$, and thus they need to keep the total mass unchanged through the iteration. They are allowed to find positive $\mu_\lambda$ with $V[\mu_\lambda]$ equal to any constant $C$ on its support. This makes their IBA more similar to the classical version using the $\widetilde{Bal}$ operator in \eqref{eq31}. However, we insist on finding some positive $\mu_\lambda$ such that $V[\mu_\lambda]=0$, c.f. Lemma \ref{lem_lam2}, but do not care about conserving the total mass.

    The second difference is that the analogue of $\mu_{\lambda_\infty}$ already gives the minimizer that \cite{DOSW25} seeks for, but we need to proceed further to larger $\lambda$ to find the $\mu_\lambda$ with $F(\lambda)=0$. This is clear from Item 4 of Proposition \ref{prop_iba}, which actually implies that $\mu_\lambda$ is the minimizer for $E$ (defined in \eqref{E}) among all positive measures on $[-1,1]$ with the same total mass, but still has $F(\lambda_\infty)>0$. Therefore we need the continuity result Lemma \ref{lem_Fcont} and the limit result Lemma \ref{lem_Fneg} to find the $\lambda$ we want, as will be done in the next section.
\end{remark}

\section{Proof of Theorem \ref{thm_main}}\label{sec_main}

Proposition \ref{prop_iba} gives $F(\lambda_\infty)>0$ where $\lambda_\infty\in (0,1)$. Lemma \ref{lem_Fneg} gives that $\lim_{\lambda\rightarrow 1^-} F(\lambda)<0$. Since $F$ is continuous by Lemma \ref{lem_Fcont}, we see that there exists $\lambda_*\in (\lambda_\infty,1)$ such that $F(\lambda_*)=0$. 

Since $\mu_{\lambda_\infty}$ is positive by Proposition \ref{prop_iba}, we see that $\mu_{\lambda_*}$ is also positive by Lemma \ref{lem_lampos}. Therefore we may apply Lemma \ref{lem_lam2} to see that 
\begin{equation}\label{rhorescale}
    \rho(x):=R_2^{1+s}\mu_{\lambda_*}(\frac{x}{R_2}),\quad R_2=\Big(\int_{K_{\lambda_*,1}}\mu_{\lambda_*}(x)\rd{x}\Big)^{-1/(2+s)} \,,
\end{equation}
is a probability measure on $K_{R_1,R_2}$ satisfying \eqref{EL} (where $R_1=\lambda_* R_2$). 

In the rest of the proof we verify \eqref{EL2} for $\rho$, which would imply that $\rho$ is the unique minimizer of $\cE$ due to Proposition \ref{prop_EL}. Denote
\begin{equation}
    v = V[\mu_{\lambda_*}]\,,
\end{equation}
which satisfies $v=0$ on $K_{\lambda_*,1}$, and the regularity of $\mu_{\lambda_*}$ from Lemma \ref{lem_mucont} shows that $v$ is continuous on $(-1,1)$. Notice that by \eqref{lem_lam12_1}, we have
\begin{equation}
    \mu_{\lambda_*} = \mu_{\lambda_\infty}\chi_{K_{\lambda_*,1}} + Bal_{\lambda_*}[\mu_{\lambda_\infty}\chi_{(-\lambda_*,\lambda_*)}]\,,
\end{equation}
where $\lambda_*>\lambda_\infty$ and $\mu_{\lambda_\infty}$ is strictly positive in $(\lambda_\infty,1)$ due to Item 2 of Proposition \ref{prop_iba}. Thus, combining with Lemma \ref{lem_mucont}, we necessarily have that
\begin{equation}\label{ulamst}
    \frac{\mu_{\lambda_*}}{\omega_{\lambda_*,1}}
\end{equation}
is $C^1$ on $[\lambda_*,1]$ and strictly positive. 


Then we have
\begin{equation}
    \lim_{x\rightarrow \lambda_*^-} v'(x) = \infty,\quad \lim_{x\rightarrow \lambda_*^-} v''(x) = \infty\,,
\end{equation}
by utilizing the singularity of $\mu_{\lambda_*}$ near $\lambda_*$ that behaves like $(x-\lambda_*)^{-\frac{1-s}{2}}$. We also have $v^{(4)}>0$ on $(-\lambda_*,\lambda_*)$ similar to the proof of Item 4 of Proposition \ref{prop_iba}. Therefore $v''(x)$ is even and strictly convex on $(-\lambda_*,\lambda_*)$. $v''$ cannot be everywhere nonnegative on $(-\lambda_*,\lambda_*)$ because otherwise one would have $v<0$ on $(-\lambda_*,\lambda_*)$, leading to $F(\lambda_*)<0$, a contradiction. Therefore there exists $y\in (0,\lambda_*)$ such that $v''>0$ on $(-\lambda_*,-y)\cup (y,\lambda_*)$, and $v''<0$ on $(-y,y)$ (see Figure \ref{fig2} for illustration). Then we deduce that there exists $z\in (0,\lambda_*)$ such that $v<0$ on $(-\lambda_*,-z)\cup (z,\lambda_*)$, and $v>0$ on $(-z,z)$.

We also notice that for any $x>1$,
\begin{equation}
    v'(x) = \int_{K_{\lambda_*,1}}W'(x-t)\mu_{\lambda_*}(t)\rd{t} + U'(x) < 0\,,
\end{equation}
because $W'(x-t)<0$ when $x-t>0$, and $U'(x)<0$. It follows that $v(x)<0$ on $(-\infty,1)\cup (1,\infty)$. 

Then we denote
\begin{equation}
    \tilde{v} = \cW*\rho+\cU \,,
\end{equation}
which satisfies \eqref{EL}, i.e., $\tilde{v}=C_0$ on $K_{R_1,R_2}$. Notice that
\begin{equation}
    -\frac{1}{1-s}\tilde{v}''(x) = V[\rho](x) = R_2^2 v(\frac{x}{R_2})\,,
\end{equation}
due to the calculation in \eqref{VR2calc}. Then it follows from the sign properties of $v$ that
\begin{equation}
    \tilde{v}''>0,\,\textnormal{on }(-\infty,R_2)\cup (R_2,\infty)\cup(-R_1,-z R_2)\cup (z R_2,R_1);\quad \tilde{v}''<0,\,\textnormal{on } (-z R_2,z R_2)\,.
\end{equation}
Then one can proceed as in the proof of Item 4 of Proposition \ref{prop_iba} (with $\tilde{v},z R_2,R_1$ in the place of $v,y,\lambda_\infty$ respectively),  to see that $\tilde{v}\ge C_0$ on $\mathbb{R}$. This gives \eqref{EL2}.

The regularity and positivity of $\rho$ stated in Theorem \ref{thm_main} is a consequence of that of \eqref{ulamst} (which we have proved), and the fact that $\rho$ is a rescaling of $\mu_{\lambda_*}$ by \eqref{rhorescale}.

\bibliographystyle{alpha}
\bibliography{minimizer_book_bib.bib}

\end{document}